\numberwithin{equation}{section}
\theoremstyle{plain}
\newtheorem{Teo}{Theorem}
\newtheorem{Prop}[Teo]{Proposition}
\newtheorem{Lemma}[Teo]{Lemma}
\newtheorem{Corollary}[Teo]{Corollary}
\newtheorem{Rem}{Remark}
\providecommand{\abs}[1]{\left\vert #1 \right\vert}
\providecommand{\norm}[1]{\|#1\|}
\providecommand{\parred}[1]{\left( #1 \right)}
\providecommand{\parcuad}[1]{\left[ #1 \right]}
\providecommand{\dr}[2]{\frac{\partial #1}{\partial #2}}
\providecommand{\ddr}[2]{\frac{\partial^2 #1}{\partial #2^2}}
\newcommand{\be}[1]{\begin{equation}\label{#1}}
\newcommand{\ee}{\end{equation}}
\newcommand{\nrm}[2]{\left\|#1\right\|_{L^{#2}(\R^2)}}
\newcommand{\beq}{\begin{eqnarray}}
\newcommand{\eeq}{\end{eqnarray}}
\newcommand{\ben}{\begin{eqnarray*}}
\newcommand{\een}{\end{eqnarray*}}
\renewcommand{\(}{\left(}
\renewcommand{\)}{\right)}
\newcommand{\ix}[1]{\int_{\R^2}{#1}\;dx}
\newcommand{\ir}[1]{\int_{0}^{\infty}{#1}\;ds}
\newcommand{\ninf}{n_\infty}
\newcommand{\cinf}{c_\infty}
\def\R{\mathbb{R}}
\def\N{\mathbb{N}}
\def\eps{\varepsilon}
\def\L{{\mathcal L}}
\makeatletter\renewcommand{\subjclass}[2][2010]{\let\@oldtitle\@title\gdef\@title{\@oldtitle\footnotetext{#1 \emph{M\lowercase{athematics subject classification.}} #2}}}
\begin{document}

\title[Asymptotic estimates for the Keller-Segel model]{Asymptotic estimates for the parabolic-elliptic Keller-Segel model in the plane}

\author[J.F.~Campos Serrano\and J. Dolbeault]{Juan F.~Campos Serrano\and Jean Dolbeault}

\address[J.~Dolbeault]
{Ceremade (UMR CNRS no. 7534), Universit\'e Paris-Dauphine, Place de Lattre de Tassigny, F-75775 Paris C\'edex 16, France}
\email{dolbeaul@ceremade.dauphine.fr}
\address[J.F.~Campos]
{Ceremade (UMR CNRS no. 7534), Universit\'e Paris-Dauphine, Place de Lattre de Tassigny, F-75775 Paris C\'edex 16, France \& Departamento de Ingenier\'{\i}a Matem\'atica and CMM, Universidad de Chile, Casilla 170 Correo 3, Santiago, Chile}
\email{campos@ceremade.dauphine.fr, juanfcampos@gmail.com}

\begin{abstract} We investigate the large-time behavior of the solutions of the two-dimensional Keller-Segel system in self-similar variables, when the total mass is subcritical, that is less than $8\,\pi$ after a proper adimensionalization. It was known from previous works that all solutions converge to stationary solutions, with exponential rate when the mass is small. Here we remove this restriction and show that the rate of convergence measured in relative entropy is exponential for any mass in the subcritical range, and independent of the mass. The proof relies on symmetrization techniques, which are adapted from a paper of J.I.~Diaz, T.~Nagai, and J.-M.~Rakotoson, and allow us to establish uniform estimates for $L^p$ norms of the solution. Exponential convergence is obtained by the mean of a linearization in a space which is defined consistently with relative entropy estimates and in which the linearized evolution operator is self-adjoint. The core of proof relies on several new spectral gap estimates which are of independent interest.\end{abstract}

\keywords{Keller-Segel model; chemotaxis; large time asymptotics; subcritical mass; self-similar solutions; relative entropy; free energy; Lyapunov functional; spectral gap; logarithmic Hardy-Littlewood-Sobolev inequality}
\subjclass{35B40, 92C17}
% 35B40 Asymptotic behavior of solutions
% 92C17 Cell movement (chemotaxis, etc.)
\maketitle
\thispagestyle{empty}

%%%%%%%%%%%%%%%%%%%%%%%%%%%%%%%%%%%%%%%%%%%%%%%%%%%%%%%%%%%%%%%%%%%%%%
%%%%%%%%%%%%%%%%%%%%%%%%%%%%%%%%%%%%%%%%%%%%%%%%%%%%%%%%%%%%%%%%%%%%%%
\section{Introduction}
Consider the two-dimensional parabolic-elliptic Keller-Segel system
\beq
\left\{\begin{array}{lcr}
\dr{u}{t} = \Delta u-\nabla\cdot(u\,\nabla v)& x\in\R^2\,, &t>0\;,\\[6pt]
v = G_2\ast u & x\in \R^2\,, &t>0\;,\\[6pt]
u(0,x) = n_0\geq 0 & x\in \R^2\,,&
\end{array}\right.
\label{KS}
\eeq
where $G_2$ denotes the Green function associated to $-\Delta$ on $\R^2$:
\[
G_2(x):=-\frac 1{2\pi}\,\log|x|\;,\quad x\in\R^2\,.
\]
The equation for the \emph{mass density} $u$ is parabolic, while the \emph{chemo-attractant density} $v$ solves an (elliptic) Poisson equation: $-\Delta v=u$. The drift term corresponds to an attractive mean-field nonlinearity, which has attracted lots of attention in mathematical biology in the recent years: see \cite{MR2448428,MR2013508,MR2073515, MR2270822,MR2430318,MR2430317} for some recent overviews. According to \cite{MR1046835,MR2103197,MR2226917,CarrilloChenLiuWang}, it is known that if
\be{cond}
n_0\in L^1_+\(\R^2\,,(1+\abs{x}^2)\,dx\)\;,\quad n_0\abs{\log n_0}\in L^1(\R^2)\quad\textrm{and}
\quad M:=\ix{n_0}< 8\,\pi\;,
\ee
then there exists a solution $u$, in the sense of distributions, that is global in time and such that $M=\ix{u(t,x)}$ is conserved along the evolution in the euclidean space $\R^2$. There is no non-trivial stationary solution of \eqref{KS} and any solution converges to zero locally as time gets large. In order to study the asymptotic behavior of $u$, it is convenient to work in self-similar variables. We define $R(t):=\sqrt{1+2\,t}$, $\tau(t):=\log R(t)$, and the rescaled functions $n$ and $c$ by
\[
u(t,x):=R^{-2}(t)\,n\(\tau(t),R^{-1}(t)\,x\)\quad\textrm{and}\quad v(t,x):=c\(\tau(t),R^{-1}(t)\,x\)\;.
\]
This time-dependent rescaling is the one of the heat equation. We observe that the non-linear term is also invariant under such a rescaling. The rescaled systems reads
\beq
\left\{\begin{array}{lcr}
\dr{n}{t} = \Delta n+\nabla\cdot(n\,x)-\nabla\cdot(n\,\nabla c)& x\in\R^2\,, &t>0\;,\\[6pt]
c = G_2 \ast n & x\in \R^2\,, &t>0\;,\\[6pt]
n(0,x) = n_0\geq 0 & x\in \R^2\,.&
\end{array}\right.
\label{RKS}
\eeq

Under Assumptions \eqref{cond}, it has been shown in \cite[Theorem 1.2]{MR2226917} that there exists a solution $n\in C^0(0,\infty;L^1(\R^2))\cap L^\infty_{\rm loc}(0,\infty;L^p(\R^2))$ for any $p\in(1,\infty)$ (also see \cite{MR2103197} for \emph{a priori} estimates) such that 
\[
\lim_{t\to\infty}\nrm{n(t,\cdot)-\ninf}1=0\quad\textrm{and}\quad\lim_{t\to\infty}\nrm{\nabla c(t,\cdot)-\nabla \cinf}2=0\;,
\]
where $(\ninf, \cinf)$ solves
\be{StatKS}
\ninf=M\,\frac{e^{\cinf-\abs{x}^2/2}}{\ix{e^{\cinf-\abs{x}^2/2}}}\quad\textrm{with}\quad \cinf=G_2 \ast \ninf\;.
\ee
Moreover, $\ninf$ is smooth and radially symmetric. Existence of a solution to \eqref{StatKS} has been established in \cite{MR1341221} by ordinary differential equation techniques and in \cite{MR2057636} by partial differential equation methods. The uniqueness has been shown in \cite{MR2249579}. To recall the dependence of $\ninf$ in~$M$, we will write it as $n_{\infty,M}$ whenever needed. 

A simple computation of the second moment shows that smooth solutions with mass larger than $8\,\pi$ blow-up in finite time; see for instance \cite{MR1046835}. The case $M=8\,\pi$ has been extensively studied. We shall refer to \cite{MR2249579,Blanchet20122142,MR2436186} for some recent papers on this topic. The asymptotic regime is of a very different nature in such a critical case. In the present paper, we shall restrict our purpose to the sub-critical case $M<8\,\pi$. 

In \cite{Blanchet2010533} (also see \cite{MR2103197}) it has been proved that there exists a positive mass $M_{\star}\le 8\,\pi$ such that for any initial data $n_0\in L^2(\ninf^{-1}\,dx)$ of mass $M<M_{\star}$ satisfying \eqref{cond}, System \eqref{RKS} has a unique solution $n\in C^0(0,\infty;L^1(\R^2))\cap L^\infty_{\rm loc}(0,\infty;L^p(\R^2))$ for any $p\in(1,\infty)$ such that
\[\label{Ineq:ExpRate}
\int_{\R^2}\abs{n(t,x)-\ninf(x)}^2\frac{dx}{\ninf(x)}\leq C\,e^{-\,\delta\,t}\quad\forall\;t\ge 0
\]
for some positive constants $C$ and $\delta$. Moreover $\delta$ can be taken arbitrarily close to $1$ as $M\to 0$. If $M<8\,\pi$, we may notice that the condition $n_0\in L^2(\ninf^{-1}\,dx)$ is stronger than~\eqref{cond}. Our main result is that $M_{\star}=8\,\pi$ and $\delta\ge1$, at least for a large subclass of solutions with initial datum $n_0$ satisfying the following technical assumption:
\be{TechnicalAssumption}
\exists\;\eps\in(0,8\,\pi-M)\quad\mbox{such that}\quad\int_0^sn_{0,*}(\sigma)\;d\sigma\leq\int_{B\(0,\sqrt{s/\pi}\)}n_{\infty,M+\eps}(x)\;dx\quad\forall\;s\ge0\;.
\ee
Here $n_{0,*}(\sigma)$ stands for the symmetrized function associated to $n_0$. Details will be given in Section~\ref{Sec:Symmetrization}.
%---------------------------------------------------------------------
\begin{Teo}\label{Thm:Main} Assume that $n_0$ satisfies \eqref{TechnicalAssumption},
\[
n_0\in L^2_+(\ninf^{-1}\,dx)\quad\textrm{and}\quad M:=\ix{n_0}< 8\,\pi\;.
\]
Then any solution of \eqref{RKS} with initial datum $n_0$ is such that
\[
\int_{\R^2}\abs{n(t,x)-\ninf(x)}^2\frac{dx}{\ninf(x)}\leq C\,e^{-\,2\,t}\quad\forall\;t\ge 0
\]
for some positive constant $C$, where $\ninf$ is the unique stationary solution to \eqref{StatKS} with mass~$M$.\end{Teo}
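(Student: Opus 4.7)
My plan follows the roadmap suggested by the abstract, combining three ingredients around the relative entropy
\[
\mathcal{E}[n]:=\int_{\R^2}|n-\ninf|^2\,\ninf^{-1}\,dx.
\]
The strategy is to show that (i) the rearrangement hypothesis \eqref{TechnicalAssumption} is propagated by the flow \eqref{RKS}, yielding uniform-in-time $L^p$ bounds and a uniform $L^\infty$ bound on $n/\ninf$; (ii) the linearization of \eqref{RKS} around $\ninf$ is a non-negative self-adjoint operator $\mathcal{L}$ on a Hilbert space $\mathcal{H}$ of mass-zero perturbations, with trivial kernel on $\mathcal{H}$; and (iii) $\mathrm{Gap}(\mathcal{L})\ge 1$ uniformly in $M\in(0,8\,\pi)$. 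Since $\tfrac{d}{dt}\|h\|_{\mathcal{H}}^2=-2\,(\mathcal{L}h,h)_{\mathcal{H}}$ at the linear level, the gap yields the rate $e^{-2t}$, and a coercivity estimate relating $\|\cdot\|_\mathcal{H}$ to $\sqrt{\mathcal{E}[\cdot]}$ converts this into decay of $\mathcal{E}$.

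For (i) I would adapt the symmetrization argument of Diaz-Nagai-Rakotoson. Writing the evolution satisfied by the partial mass $\mu(t,s):=\int_0^s n_*(t,\sigma)\,d\sigma$, one obtains a degenerate parabolic inequality whose diffusion and drift terms come directly from \eqref{RKS}, while the chemotactic coupling $c=G_2*n$ contributes a non-local, rearrangement-monotone term thanks to Gauss's theorem applied to the radial mass distribution associated with $n_*$. A parabolic maximum principle, comparing $\mu$ with the corresponding partial mass of $n_{\infty,M+\eps}$, propagates \eqref{TechnicalAssumption} in time. Uniform control on $\|n(t,\cdot)/\ninf\|_{L^\infty(\R^2)}$ and on $\|n(t,\cdot)\|_{L^p(\R^2)}$ for every $p\in[1,\infty]$ follows.

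For (ii) and (iii), setting $h=n-\ninf$, $f=h/\ninf$, and $\phi=c-\cinf$ with $-\Delta\phi=h$, the linearized equation is $\partial_t h = -\mathcal{L}h$ with $\mathcal{L}h := -\nabla\cdot(\ninf\,\nabla(f-\phi))$. The natural quadratic form is the second variation of the free energy at $\ninf$,
\[
\mathcal{Q}[h]:=\int_{\R^2}\frac{h^2}{\ninf}\,dx-\int_{\R^2}|\nabla\phi|^2\,dx\,,
\]
whose positivity on mass-zero perturbations when $M<8\,\pi$ (a consequence of the logarithmic Hardy-Littlewood-Sobolev inequality) defines a Hilbert norm $\|\cdot\|_\mathcal{H}:=\sqrt{\mathcal{Q}}$ in which $\mathcal{L}$ is self-adjoint. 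To establish $\mathrm{Gap}(\mathcal{L})\ge 1$, I would use the radial symmetry of $\ninf$ to expand $f$ on spherical harmonics and reduce the spectral problem to a countable family of one-dimensional weighted eigenvalue problems, whose lowest non-trivial eigenvalues can be computed using the explicit ODE satisfied by the radial profile of $\ninf$ together with the exact eigenfunctions generated by the translation and dilation invariances of the unrescaled problem \eqref{KS}.

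The conclusion then follows by Gronwall. The uniform $L^p$ bounds from the symmetrization step show that the quadratic remainder in $\partial_t h$ is controlled in $\mathcal{H}$ by $\mathcal{E}[n]$ times a coefficient that vanishes as $t\to\infty$, the smallness coming from qualitative convergence extracted from the free energy inequality. Hence for $t$ large the full nonlinear dynamics dissipates $\mathcal{E}$ at the linear rate dictated by $\mathrm{Gap}(\mathcal{L})$. I expect the main obstacle to lie in the spectral gap estimate itself: for $M$ close to $8\,\pi$ the profile $\ninf$ concentrates sharply, standard weighted Poincar\'e inequalities degenerate, and the bound $\mathrm{Gap}(\mathcal{L})\ge 1$ must be extracted from the algebraic symmetries of $\mathcal{L}$ rather than from the perturbative small-mass arguments of \cite{Blanchet2010533}.
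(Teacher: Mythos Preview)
Your roadmap matches the paper's architecture: symmetrization \`a la Diaz--Nagai--Rakotoson for uniform $L^p$ bounds, linearization in the Hilbert space induced by the second variation $\mathcal Q$ of the free energy, a spectral gap $\ge 1$ obtained from spherical-harmonics reduction together with the explicit eigenfunctions coming from translation and dilation invariance, and a Gronwall argument closing on $\mathcal Q[h(t)]$. Two points deserve more care than you give them. First, the coercivity step ``$\mathcal E[n]\lesssim\mathcal Q[h]$'' is not a formality: since $\mathcal Q[h]=\mathcal E[n]-\int|\nabla\phi|^2$, you need a \emph{strict} inequality $\Lambda\int h\,(-\Delta)^{-1}h\le\int h^2/\ninf$ with $\Lambda>1$ on mass-zero perturbations, and the paper proves this separately (via Legendre duality with a Poincar\'e inequality for the Schr\"odinger operator $-\Delta-\ninf$, plus a concentration-compactness argument). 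Second, symmetrization yields uniform $L^p$ control on $n$ but not a pointwise bound on $n/\ninf$; the paper instead bootstraps the known $L^1$ convergence to uniform convergence of $n-\ninf$ and $\nabla c-\nabla\cinf$ through a Duhamel representation with the Fokker--Planck kernel, and it is this $L^\infty$ smallness of $\nabla(c-\cinf)$, not merely free-energy decay, that makes the cubic remainder negligible in the Gronwall step.
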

%---------------------------------------------------------------------
This result is consistent with the recent results of \cite{Calvez:2010fk} for the two-dimensional radial model and its one-dimensional counterpart (see Proposition~\ref{Prop:RadialGap} for more comments). For completeness, let us mention that results of exponential convergence for problems with mean field have been obtained earlier in \cite{MR2053570,MR2209130}, but only for interaction potentials involving much smoother kernels than $G_2$. The technical restriction~\eqref{TechnicalAssumption} on the initial datum looks rather strong as it imposes a uniform bound with a decay at infinity which is the one of a stationary solution (with larger mass). However, it is probably not as restrictive as it looks for well behaved solutions. For instance, it can reasonably be expected that the property holds true after a certain time delay for any solution corresponding to a smooth compactly supported initial datum. Solutions with a slower decay at infinity may be more difficult to handle. Both issues are open and probably require a significant effort for reaching a complete answer. This is why they are out of the scope of the present paper whose aim is to establish that the exponential convergence holds for arbitrary masses less than $8\pi$ and at a rate which is independent of the mass.

\medskip Our paper is organized as follows. In Section~\ref{Sec:Symmetrization}, we will apply symmetrization techniques as in \cite{MR1361010,MR1620286} to establish uniform estimates on $\nrm np$. Then we will prove the uniform convergence of~$n$ to $\ninf$ using Duhamel's formula: see Corollary~\ref{Cor:UniformCV} in Section~\ref{Sec:Uniform}. Section~\ref{Sec:Linearization} is devoted to the linearization of the problem around~$\ninf$ and to the study of the spectral gap of the linearized operator. A strict positivity result for the linearized entropy is also needed and will be proved in Section~\ref{Sec:Strict}. The proof of Theorem~\ref{Thm:Main} is completed in the last section. It is based on two estimates: Theorems~\ref{Teo:Gap} and~\ref{Teo:LimHLS} (also see Corollary~\ref{Cor:LimHLS}) that are established in Sections~\ref{Sec:Linearization} and~\ref{Sec:Strict} respectively. Some of the results of Sections~\ref{Sec:Linearization} and~\ref{Sec:Strict} (see Theorem~\ref{Teo:Gap} and Corollary~\ref{Cor:LimHLS}) have been announced without proof in~\cite{CD2012Cras} in connection with a new Onofri type inequality, whose linearized form is given in Inequality~\eqref{Ineq:ImprovedGapBis}.

%%%%%%%%%%%%%%%%%%%%%%%%%%%%%%%%%%%%%%%%%%%%%%%%%%%%%%%%%%%%%%%%%%%%%%
%%%%%%%%%%%%%%%%%%%%%%%%%%%%%%%%%%%%%%%%%%%%%%%%%%%%%%%%%%%%%%%%%%%%%%
\section{Symmetrization}\label{Sec:Symmetrization}

In this section, we adapt the results of J.I.~Diaz, T.~Nagai, and J.-M.~Rakotoson in \cite{MR1620286} to the setting of self-similar variables. Several key estimates are based on an earlier work of J.I.~Diaz and T.~Nagai for the bounded domain case: see \cite{MR1361010}. For a general introduction to symmetrization techniques, we refer to \cite{MR2001i:00001}. We shall therefore only sketch the main steps of the method and emphasize the necessary changes. 

\medskip To any measurable function $u:\R^2\mapsto [0,+\infty)$, we associate the distribution function defined by $\mu(t,\tau):=\abs{\{u>\tau\}}$ and its decreasing rearrangement given by
\[
u_*: [0,+\infty)\;\to\;[0,+\infty]\;,\quad s\;\mapsto\;u_*(s)=\inf\{\tau\geq0\;:\;\mu(t,\tau)\leq s\}\;.
\]
We adopt the following convention: for any time-dependent function $u:(0,+\infty)\times\R^2\to[0,+\infty)$, we will also denote by $u_*$ the decreasing rearrangement of $u$ with respect to the spatial variable, that is, $u_*(t,s)=u(t,.)_*(s)$.

Rearrangement techniques are a standard tool in the study of partial differential equations: in the framework of chemotaxis, see for instance \cite{MR572958,MR853732,MR733257} in case of bounded domains, and \cite{MR1620286} for unbounded domains. Let us briefly recall some properties of the decreasing rearrangement:
\begin{enumerate}
\item For every measurable function $F:\R^+\mapsto \R^+$, we have
\[
\ix{F(u)}=\ir{F(u_*)}\;.
\]
In particular, if $u\in L^p(\R^N)$ with $1\leq p\leq\infty$, then $\norm{u}_{L^p(\R^N)}=\norm{u_*}_{L^p(\R^N)}$.
\item If $u\in W^{1,q}(0,T; L^p(\R^N))$ is a nonnegative function, with $1\leq p<\infty$ and $1\leq q\leq \infty$, then $u_*\in W^{1,q}(0,T; L^p(0,\infty))$ and the formula
\[\label{dnr}
\int_0^{\mu(t,\tau)}\dr{u_*}{t}(t,\sigma)\;d\sigma=\int_{\{u(t,\cdot)>\tau\}}\dr{u}{t}(t,x)\;dx
\]
holds for almost every $t\in(0,T)$. Here $\mu(t,\tau)$ denotes $\abs{\{u(t,\cdot)>\tau\}}$. See \cite[Theorem~2.2, (ii), p.~167]{MR1620286} for a statement and a proof.
\end{enumerate}

As in \cite{MR1620286}, let us consider a solution $(n,c)$ of \eqref{RKS} and define
\[
k(t,s):=\int_0^s n_*(t,\sigma)\;d\sigma
\]
The strategy consists in finding a differential inequality for $k(t,s)$. Then, using a comparison principle, we will obtain an upper bound on the $L^p$ norm of $n$. In \cite{MR1620286}, the method was applied to~\eqref{KS}. Here we adapt it to the solution in rescaled variables, that is~\eqref{RKS}.
%---------------------------------------------------------------------
\begin{Lemma}\label{Lem:CompSym} If $n$ is a solution of \eqref{RKS} with initial datum $n_0$ satisfying the assumptions of Theorem~\ref{Thm:Main}, then the function $k(t,s)$ satisfies
\[
k\in L^\infty\parred{[0,+\infty)\times(0,+\infty)}\cap H^1\parred{[0,+\infty);W_{\rm{loc}}^{1,p}(0,+\infty)}\cap L^2\parred{[0,+\infty);W_{\rm{loc}}^{2,p}(0,+\infty)}
\]
and
\ben
\left\{\begin{array}{ll}
\dr{k}{t}-4\,\pi\,s\,\ddr{k}{s}-(k+2\,s)\,\dr{k}{s}\leq 0 & \textrm{a.e. in } (0,+\infty)\times(0,+\infty)\;,\\[6pt]
k(t,0)=0\;,\quad k(t,+\infty)=\ix{n_0} & \textrm{for } t\in(0,+\infty)\;,\\[6pt]
k(0,s)=\int_0^s (n_0)_*\;d\sigma& \textrm{for } s\geq0\;.
\end{array}\right.
\label{ineq}
\een
\end{Lemma}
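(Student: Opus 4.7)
The plan is to follow the classical Talenti--Bandle symmetrization scheme, in the form developed for the parabolic-elliptic Keller-Segel system by J.I.~Diaz, T.~Nagai and J.-M.~Rakotoson in \cite{MR1620286}. The only substantial new ingredient compared to \cite{MR1620286} is the treatment of the confinement drift $\nabla\cdot(n\,x)$ coming from the self-similar change of variables. First I would dispatch the regularity claims: the bound $k(t,s)\leq M$ is immediate because $k(t,s)\leq\int_0^\infty n_*(t,\sigma)\,d\sigma=M$, and the Sobolev regularity in $s$ and~$t$ follows by transferring the $L^\infty_{\mathrm{loc}}((0,\infty);L^p(\R^2))$ and time regularity of $n$ --- which is known from \cite{MR2226917,Blanchet2010533} under the hypotheses of Theorem~\ref{Thm:Main} --- through the rearrangement properties~(1)--(2) recalled above.

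For the differential inequality, fix (for a.e.\ $\tau>0$) the super-level set $\Omega_\tau(t):=\{n(t,\cdot)>\tau\}$ of measure $\mu(t,\tau)$, and integrate the first line of \eqref{RKS} over $\Omega_\tau$. Property~(2) rewrites the time-derivative term as $\partial_t k(t,s)\big|_{s=\mu(t,\tau)}$. The three spatial contributions are then estimated as follows. (i)~The diffusion term gives the classical Talenti bound $\int_{\Omega_\tau}\Delta n\,dx\leq 4\pi\,\mu(t,\tau)\,\partial_s n_*(t,\mu(t,\tau))$ via the divergence theorem, Cauchy--Schwarz, the co-area formula, and the planar isoperimetric inequality $\mathrm{Per}(\Omega_\tau)^2\geq 4\pi\,\mu(t,\tau)$. (ii)~The confining drift gives, by the divergence theorem and $n=\tau$ on $\partial\Omega_\tau$, the clean identity $\int_{\Omega_\tau}\nabla\cdot(n\,x)\,dx=\tau\int_{\partial\Omega_\tau}x\cdot\nu\,d\sigma=2\,\tau\,\mu(t,\tau)$. (iii)~The chemotactic term, using $-\Delta c=n$, becomes $-\int_{\Omega_\tau}\nabla\cdot(n\,\nabla c)\,dx=-\tau\int_{\Omega_\tau}\Delta c\,dx=\tau\int_{\Omega_\tau}n\,dx=\tau\,k(t,\mu(t,\tau))$.

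Setting $s=\mu(t,\tau)$ and using $\tau=n_*(t,s)=\partial_s k(t,s)$ and $\partial_s n_*(t,s)=\partial_{ss}k(t,s)$, the three contributions sum to the pointwise inequality $\partial_t k - 4\pi s\,\partial_{ss}k - 2s\,\partial_s k - k\,\partial_s k\leq 0$, which is the stated bound. The equality cases and non-injectivity of $\tau\mapsto\mu(t,\tau)$ on flat portions of $n_*$ are resolved by a standard continuity argument to extend the inequality to a.e.\ $s\in(0,+\infty)$. The boundary values $k(t,0)=0$ and $k(t,+\infty)=M$ follow respectively from the definition of $k$ and from mass conservation in the subcritical regime \cite{MR2226917}, while the initial condition holds by construction.

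The main obstacle is the justification of step~(i): the divergence theorem, the co-area formula and the isoperimetric inequality are being applied on level sets that \emph{a priori} need not have smooth boundary, and the rigorous treatment requires the measure-theoretic rearrangement toolkit of \cite{MR1361010,MR1620286}. My strategy is to invoke this machinery directly rather than reproduce it; the only computation that genuinely requires fresh attention for the self-similar system is~(ii), where the confining drift produces the $2\,\tau\,\mu(t,\tau)$ contribution responsible for the extra $2s\,\partial_s k$ term that distinguishes this lemma from its unscaled counterpart in \cite{MR1620286}.
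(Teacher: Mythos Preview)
Your proposal is correct and follows essentially the same approach as the paper, which also adapts the Diaz--Nagai--Rakotoson scheme and isolates the confining drift $\nabla\cdot(n\,x)$ as the only new computation, obtaining precisely the extra $2s\,\partial_s k$ term you identify. The one presentational difference is that where you integrate the equation directly over the super-level set $\Omega_\tau$ and then appeal to \cite{MR1361010,MR1620286} to justify the level-set calculus, the paper instead writes out that machinery explicitly: it tests \eqref{RKS} against the Lipschitz truncation $T_{\tau,h}(n)$ on all of $\R^2$, passes to the limit $h\to0$ term by term (so the drift and chemotactic contributions are computed weakly via the auxiliary primitive $\Phi_{\tau,h}$ rather than by the boundary identity $n|_{\partial\Omega_\tau}=\tau$), and then integrates in $\tau$ before differentiating. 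This test-function route is exactly the rigorous substitute for your steps (i)--(iii) that you flag as needing justification, so your strategy and the paper's execution coincide.
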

%---------------------------------------------------------------------
\begin{proof} The proof follows the method of \cite[Proposition 3.1]{MR1620286}. We will therefore only sketch the main steps that are needed to adapt the results to the setting of self-similar variables and refer to \cite{MR1620286} for all technical details.

For $\tau\in(0, n_*(t,0))$ and $h>0$, define the truncation function $T_{\tau,h}$ on $(-\infty,+\infty)$ by
\ben
T_{\tau,h}&=& \left\{\begin{array}{ll}
0 & \textrm{if }s\leq\tau\\[6pt]
s-\tau & \textrm{if }\tau<s\leq \tau+h\\[6pt]
h & \textrm{if }\tau+h<s
\end{array}\right.
\een
and observe that $T_{\tau,h}(n(t,\cdot))$ belongs to $W^{1,p}(\R^2)$ since $n(t,\cdot)\in W^{1,p}(\R^2)$ and $T_{\tau,h}$ is Lipschitz continuous. Now we integrate \eqref{RKS} against $T_{\tau,h}(n)$ over $\R^2$, and integrate by parts to obtain
\[
\ix{\dr{n}{t}\,T_{\tau,h}(n)}+\ix{\nabla n\,\nabla T_{\tau,h}(n)}=\ix{n\,(\nabla c-x)\,\nabla T_{\tau,h}(n)}\;.
\]
We have that $\abs{\{n=\tau\}}=0$ for almost every $\tau\ge0$. Hence one can prove that
\[
\lim_{h\to 0}\frac1h\ix{\dr{n}{t}\,T_{\tau,h}(n)}=\int_{\{n>\tau\}}\dr{n}{t}(t,x)\;dx=\dr{k}{t}(t,\mu(t,\tau))\;.
\]
Next we observe that
\ben
\lim_{h\to 0}\frac1h \ix{\nabla n\,\nabla T_{\tau,h}(n)} &=& \lim_{h\to 0}\frac1h \parred{\int_{\{n>\tau\}}\abs{\nabla n}^2\,dx-\int_{\{n>\tau+h\}}\abs{\nabla n}^2\,dx}\\[6pt]
&=&\dr{}{\tau}\int_{\{n>\tau\}}\abs{\nabla n}^2\,dx\;.
\een
Consider the function
\[
\Phi_{\tau, h} = \int_0^s \sigma\,\dr{T_{\tau,h}}{\sigma}(\sigma)\;d\sigma = \left\{\begin{array}{ll}
0 & \textrm{if }s\leq\tau\;,\\[6pt]
\frac12\,(s^2-\tau^2) & \textrm{if }\tau<s\leq \tau+h\;,\\[6pt]
h(\tau + \frac h2) & \textrm{if }\tau+h<s\;.
\end{array}\right.
\]
Integrating the Poisson equation for $c$ against $\Phi_{\tau, h}(n)$, we get
\[
\ix{\nabla c\,\nabla \Phi_{\tau, h}(n)}= \ix{n\,\nabla c\,\nabla T_{\tau, h}(n)} = \ix{n\,\Phi_{\tau, h}(n)}\;,
\]
thus proving that
\ben
&&\lim_{h\to 0_+}\frac1h\ix{n\,\nabla c\,\nabla T_{\tau, h}(n)}\\[6pt]
&&=\lim_{h\to 0_+}\(\frac1{2\,h}\int_{\{\tau<n\leq\tau+h\}}n\,(n^2-\tau^2)\;dx+\int_{\{n >\tau+h\}}n\(\tau+\tfrac h2\)\,dx\)\\[6pt]
&&=\tau\int_{\{n>\tau\}}n\;dx =\dr{k}{s}(t,\mu(t,\tau))\,k(t,\mu(t,\tau))
\een
since $\tau=n_*(t,\mu(t,\tau))=\dr{k}{s}(t,\mu(t,\tau))$ and $\int_{\{n>\tau\}}n\;dx = \int_0^{\mu(t,\tau)}n_*(t,s)\;ds=k(t,\mu(t,\tau))$. On the other hand,
\ben
\lim_{h\to 0_+}\frac1h\ix{n(x)\,x\cdot\nabla T_{\tau,h}(x)} & = & \lim_{h\to 0_+}\frac1h\ix{x\cdot\nabla \Phi_{\tau,h}(n)}=-\lim_{h\to 0_+}\frac2h\ix{\Phi_{\tau,h}(n)}\\[6pt]
&=& -\,2\,\tau\,|\{n>\tau\}|\, =\, -\,2\,\dr{k}{s}(t,\mu(t,\tau))\,\mu(t,\tau)\;.
\een
Using the inequality
\[
4\,\pi\,\mu(t,\tau)\leq \frac{\partial\mu}{\partial\tau}(t,\tau)\,\dr{}{\tau}\int_{\{n>\tau\}}\abs{\nabla n}^2\,dx\,,
\]
(see \cite[Proof of Lemma 4, p.~669]{MR1361010}, and also \cite[pp.~25-26]{MR733257} or \cite[p.~20]{MR853732}, and \cite{MR551065} for an earlier reference) we obtain
\[
1\leq -\frac{\frac{\partial\mu}{\partial\tau}(t,\tau)}{4\,\pi\,\mu(t,\tau)}\,\parred{-\dr{k}{t}(t,\mu(t,\tau))+\dr{k}{s}(t,\mu(t,\tau)) \big(k(t,\mu(t,\tau))+2\,\mu(t,\tau)\big)}
\]
for almost every $\tau\in(0,n_*(t,0))$. Integrating over $(\tau_1,\tau_2)\subset (0,n_*(t,0))$, as in \cite[Lemma 2]{MR1172451}, we get
\ben
\frac1{4\,\pi} \int_{\mu(t,\tau_1)}^{\mu(t,\tau_2)}\parred{-\dr{k}{t}(t,s)+\dr{k}{s}(t,s) \big(k(t,s)+2\,s\big)}\,\frac{ds}s &\le&\tau_1-\tau_2
\een
where
\ben
\tau_1-\tau_2 &=&\dr{k}{s}(t,\mu(t,\tau_1))-\dr{k}{s}(t,\mu(t,\tau_2))\;.
\een
Hence dividing by $(\mu(t,\tau_2)-\mu(t,\tau_2))$ and then taking the limit completes the proof. 
\end{proof}

The next result is adapted from \cite[Proposition A.1, p.~676]{MR1361010} and \cite[Proposition 3.2, p.~172]{MR1620286}. Although it is unnecessarily general for our purpose, as the function $g$ below is extremely well defined (and independent of $t$), we keep it as in J.I.~Diaz \emph{et al.} and give a sketch of the proof, for completeness.
%---------------------------------------------------------------------
\begin{Prop}\label{Prop:comparison}
Let $f$, $g$ be two continuous functions on $Q=\R^+\times(0,+\infty)$ such that
\begin{enumerate}
\item[{\rm (i)}] $f$, $g\in L^\infty (Q)\cap L^2(0,+\infty; W^{2,2}_{\rm{loc}}(0,+\infty))$, $\dr{f}{t}$, $\dr{g}{t}$ $\in L^2(0,+\infty; L^{2}_{\rm{loc}}(0,+\infty))$,
\item[{\rm (ii)}] $\abs{\dr{f}{s}(t,s)}\leq C(t)$ and $\abs{\dr{g}{s}(t,s)}\leq C(t)\max\{s^{-1/2},1\}$, for some continuous function $t\mapsto C(t)$ on $\R^+$.
\end{enumerate}
If $f$ and $g$ satisfy
\[
\left\{\begin{array}{c}
\dr{f}{t}-4\,\pi\,s\,\ddr{f}{s}-(f+2\,s)\,\dr{f}{s}\leq \dr{g}{t}-4\,\pi\,s\,\ddr{g}{s}-(g+2\,s)\,\dr{g}{s} \textrm{ a.e. in }Q\;,\\[6pt]
f(t,0)=0=g(t,0)\quad\textrm{and}\quad f(t,+\infty)\leq g(t,+\infty) \textrm{ for any } t\in(0,+\infty)\;,\\[6pt]
f(0,s)\leq g(0,s) \textrm{ for } s\geq0\;, \textrm{and } g(t,s)\geq 0 \textrm{ in }Q\;,
\end{array}\right.
\]
then $f\leq g$ on $Q$.
\end{Prop}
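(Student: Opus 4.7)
We set $w := f - g$. Taking the difference of the two inequalities and using the algebraic identity
\[
(f+2s)\,\partial_s f - (g+2s)\,\partial_s g \;=\; 2s\,\partial_s w + \tfrac12\,\partial_s\!\bigl((f+g)\,w\bigr),
\]
we find that $w$ satisfies
\[
\partial_t w - 4\pi s\,\partial_{ss}w - 2s\,\partial_s w - \tfrac12\,\partial_s\!\bigl((f+g)\,w\bigr)\;\le\; 0 \quad\text{a.e.\ in }Q.
\]
The plan is to test this inequality against $w_+ := \max\{w,0\}$ and derive a Gronwall inequality for $t\mapsto\|w_+(t,\cdot)\|_{L^2(0,+\infty)}^2$. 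The three boundary/initial hypotheses translate into $w_+(t,0)=0$, $w_+(0,\cdot)\equiv 0$ and $w_+(t,s)\to 0$ as $s\to +\infty$. The regularity in (i) together with the pointwise bounds in (ii) and the $L^\infty$ control on $f,g$ justify the use of $w_+$ as a test function, up to a truncation argument in $s$ near both $0$ and $+\infty$; the condition $g\ge 0$ enters here to ensure the correct sign of the boundary contribution at $s=R\to +\infty$ produced by the integration by parts of the convective piece $\partial_s((f+g)\,w)$.

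Once these integrations by parts are carried out, the diffusive term contributes $4\pi\int_0^\infty s\,(\partial_s w_+)^2\,ds$, the self-similar drift $-2s\,\partial_s w$ produces the coercive lower-order term $\|w_+\|_{L^2}^2$ via $-\int_0^\infty s\,\partial_s(w_+^2)\,ds=\int_0^\infty w_+^2\,ds$, and the nonlinear convective term yields $\tfrac14\int_0^\infty \partial_s(f+g)\,w_+^2\,ds$, so that, for a.e.\ $t$,
\[
\tfrac12\,\frac{d}{dt}\|w_+\|_{L^2}^2 + 4\pi\int_0^\infty s\,(\partial_s w_+)^2\,ds + \|w_+\|_{L^2}^2 \;\le\;\tfrac14\int_0^\infty\partial_s(f+g)\,w_+^2\,ds.
\]
The remaining task is to bound the right-hand side by $K(t)\,\|w_+\|_{L^2}^2$ plus a small multiple of the dissipation. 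Using $|\partial_s(f+g)|\le 2C(t)\max\{s^{-1/2},1\}$, the region $s\ge 1$ is immediately controlled by $C(t)\,\|w_+\|_{L^2}^2$. The singular part $\int_0^1 s^{-1/2}\,w_+^2\,ds$ is absorbed into a (small) fraction of $\int s\,(\partial_s w_+)^2\,ds$ via a Hardy-type inequality in the measure $s\,ds$---morally the radial two-dimensional Hardy estimate in the variables $s=\pi r^2$---combined with the $L^\infty$ bound on $w_+$ furnished by (i). This yields
\[
\frac{d}{dt}\|w_+(t,\cdot)\|_{L^2}^2 \;\le\; K(t)\,\|w_+(t,\cdot)\|_{L^2}^2 \quad\text{with}\quad K\in L^1_{\rm loc}(0,+\infty),
\]
and Gronwall's lemma with $\|w_+(0,\cdot)\|_{L^2}=0$ forces $w_+\equiv 0$, i.e.\ $f\le g$ on $Q$.

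The main difficulty is the simultaneous handling of the $s^{-1/2}$ singularity of $\partial_s g$ near $s=0$ and the absence of compact support of $w_+$ at $s=+\infty$: the first is dealt with by matching the singularity against the weighted dissipation, the second by a careful cutoff argument relying on (ii) and the $L^\infty$ bound on $f,g$. Compared to the corresponding proof of Diaz, Nagai and Rakotoson for the unrescaled system, the only new ingredient is the coercive contribution $\|w_+\|_{L^2}^2$ generated by the self-similar drift $-2s\,\partial_s w$, which in fact helps to absorb the right-hand side rather than obstructing the argument.
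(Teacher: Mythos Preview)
Your overall strategy (set $w=f-g$, test against a positive part, derive a Gronwall inequality from zero initial data) is the right one and matches the paper's. The gap is in your treatment of the nonlinear term near $s=0$.

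After putting the nonlinear term in divergence form and integrating by parts, you arrive at the singular contribution $\int_0^1 s^{-1/2}\,w_+^2\,ds$, which you propose to absorb into the dissipation $\int_0^\infty s\,(\partial_s w_+)^2\,ds$ via a ``Hardy-type inequality in the measure $s\,ds$'' together with the $L^\infty$ bound on $w_+$. This step does not close. The weight $s$ in the dissipation degenerates at the origin, so the energy $\int s\,(\partial_s w_+)^2\,ds$ is scale-invariant under $s\mapsto \lambda s$; testing on $w_+(s)=s^\alpha$ for small $\alpha>0$ gives $\int_0^1 s^{-1/2}w_+^2\,ds\to 2$ while $\int_0^1 s\,(\partial_s w_+)^2\,ds=\alpha/2\to 0$, so no inequality of the form $\int_0^1 s^{-1/2}w_+^2\le C\int s\,(\partial_s w_+)^2$ can hold. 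Invoking the $L^\infty$ bound only produces an additive constant (e.g.\ $\int_0^1 s^{-1/2}w_+^2\le C_p\,\|w_+\|_\infty^{2/p}\,\|w_+\|_{L^2}^{2/q}$ or simply $\le 2\|w_+\|_\infty^2$), not a term proportional to $\|w_+\|_{L^2}^2$; the resulting differential inequality $\frac{d}{dt}X\le K(t)X+L(t)$ with $L\not\equiv 0$, or a sublinear one $\frac{d}{dt}X\le K(t)X^{1/q}$, does \emph{not} force $X\equiv 0$ from $X(0)=0$. (Your remark that this is ``morally the radial two-dimensional Hardy estimate'' is precisely the warning sign: that is the critical case where Hardy fails.)

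The paper avoids this by multiplying the equation for $w$ by $w_+/s$ rather than $w_+$. The weight $1/s$ cancels the factor $s$ in the diffusion, so after integration by parts the dissipation is the \emph{unweighted} $4\pi\int(\partial_s w_+)^2\,ds$, and the Gronwall quantity is $\int w_+^2/s\,ds$. The nonlinear term is kept in the non-divergence form $w\,\partial_s f+g\,\partial_s w$: the first piece gives $C(t)\int w_+^2/s$ directly (using only $|\partial_s f|\le C(t)$), and the second is controlled by $\int (g/\sqrt s)\,(w_+/\sqrt s)\,\partial_s w_+\,ds$ together with the pointwise bound $g(t,s)/\sqrt s\le C(t)$ (from $g(t,0)=0$ and $|\partial_s g|\le C(t)s^{-1/2}$), then absorbed into $\int(\partial_s w_+)^2$ by Cauchy--Schwarz. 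This is exactly the place where hypothesis~(ii) on $\partial_s g$ is tuned to the argument; in your divergence-form route that structural match is lost. If you want to keep the unweighted test function $w_+$, do not pass to divergence form: split the nonlinear term as $w\,\partial_s f+g\,\partial_s w$ and use $|g(t,s)|\le 2C(t)\sqrt s$ for $s\le 1$ to pair $g$ with $\sqrt s\,\partial_s w_+$; then both pieces are bounded by $\epsilon\int s(\partial_s w_+)^2+C_\epsilon(t)\int w_+^2$, and Gronwall closes.
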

%---------------------------------------------------------------------
\begin{proof} Take $w=f-g$. We have
\[
\dr{w}{t}-4\,\pi\,s\,\ddr{w}{s}-\,2\,s\,\dr{w}{s}\leq w\,\dr{f}{s}+g\,\dr{w}{s}\,.
\]
Multiplying by $w_+/s$, and integrating over $(\delta, L)$ with $0<\delta<1<L$, we obtain
\begin{multline*}
\frac12\dr{}{t}\int_\delta^L \frac{w^2_+}s\;ds + 4\,\pi\int_\delta^L\parred{\dr{w_+}{s}}^2ds - \parcuad{4\,\pi\,\dr{w_+}{s}(t,s)\,w_+(t,s)}_{s=\delta}^{s=L}-\int_\delta^L \dr{}{s}\parred{w_+^2}\;ds\\
\leq \int_\delta^L\parred{w_+^2\,\dr{f}{s}+w_+\,\dr{w}{s}\,g}\,\frac{ds}s
\end{multline*}
thus showing that
\[
\frac12\,\dr{}{t}\int_\delta^L \frac{w^2_+}s\;ds + 4\,\pi\int_\delta^L\parred{\dr{w_+}{s}}^2ds \leq C(t) \int_\delta^L \frac{w^2_+}s\;ds+\int_\delta^L \frac{w_+}s\,\dr{w}{s}\,g\;ds+G(t,\delta,L)\;,
\]
where $G(t,\delta,L)$ uniformly (with respect to $t\ge 0$) converges to $0$ as $\delta\to 0$ and $L\to +\infty$. Now using the fact that $g(t,s)/\sqrt s\leq C(t)$ we obtain that, for some constant $K>0$,
\[
\int_\delta^L \frac{w_+}s\,\dr{w}{s}\,g\;ds \leq 4\,\pi\int_\delta^L \parred{\dr{w_+}{s}}^2ds+K\,C^2(t)\int_\delta^L \frac{w^2_+}s\;ds\;,
\]
yielding
\[
\frac12\dr{}{t}\int_\delta^L \frac{w^2_+}s\;ds \leq (1+K\,C(t))\,C(t) \int_\delta^L \frac{w^2_+}s\;ds+G(t,\delta,L)\;.
\]
From Gronwall's lemma and $w_+(0,s)=0$, with $R(t):=2\int_0^t(1+K\,C(\tau))\,C(\tau)\,d\tau$, it follows that
\[
\int_\delta^L \frac{w^2_+}s\;ds \leq 2\,e^{R(t)}\int_0^te^{-R(\tau)}\,G(\tau,\delta,L)\;d\tau\;.
\]Taking the limit as $\delta \to 0$ and $L\to+\infty$, we obtain
\[
\int_0^\infty \frac{w^2_+}s\;ds \leq 0\;,
\]
which implies $f\leq g$ and concludes the proof.
\end{proof}

Using Lemma~\ref{Lem:CompSym} and Proposition~\ref{Prop:comparison}, we can now establish uniforms bounds on $\norm{n}_{L^p(\R^2)}$ and $\norm{\nabla c}_{L^\infty(\R^2)}$, which are enough to justify all integrations by parts that are needed in this paper.

%---------------------------------------------------------------------
\begin{Teo}\label{Thm:UniformBound} Assume that $n_0\in L^2_+(\ninf^{-1}\,dx)$ satisfies \eqref{TechnicalAssumption} and $ M:=\ix{n_0}< 8\,\pi$. Then, for any $p\in[1,\infty]$, there exist positive constants $C_1=C_1(M,p)$ and $C_2=C_2(M,p)$ such that
\[
\norm{n(t,\cdot)}_{L^p(\R^2)}\leq C_1\quad\mbox{and}\quad \norm{\nabla c(t,\cdot)}_{L^\infty(\R^2)}\leq C_2\quad\forall\;t>0\;.
\]
\end{Teo}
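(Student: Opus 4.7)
\medskip\noindent\textbf{Proof proposal.} The plan is to use the comparison principle of Proposition~\ref{Prop:comparison} to show that the rearranged mass function $k(t,s)=\int_0^s n_*(t,\sigma)\,d\sigma$ is dominated, uniformly in time, by its time-independent analogue
\[
K(s):=\int_{B(0,\sqrt{s/\pi})}n_{\infty,M+\eps}(x)\,dx
\]
associated with the stationary solution of mass $M+\eps$ appearing in~\eqref{TechnicalAssumption}. From $k\le K$ I will extract a pointwise $L^\infty$ bound on $n$, derive the $L^p$ bounds by interpolation with conservation of mass, and finally control $\nabla c$ through a standard Riesz potential estimate.

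First I would verify that $K$ realises the equality case of the inequality of Lemma~\ref{Lem:CompSym}. The function $n_{\infty,M+\eps}$ is smooth and radially decreasing, hence the only inequality used in the proof of Lemma~\ref{Lem:CompSym}---the isoperimetric bound $4\pi\mu(\tau)\le\mu'(\tau)\,\frac{d}{d\tau}\int_{\{n>\tau\}}|\nabla n|^2\,dx$---is saturated on its level sets. Rerunning the same derivation with the stationary profile in place of $n(t,\cdot)$ yields
\[
-\,4\pi\,s\,K''(s)-\big(K(s)+2\,s\big)\,K'(s)=0\quad\text{on }(0,+\infty)\,,
\]
together with $K(0)=0$ and $K(+\infty)=M+\eps>M=k(t,+\infty)$. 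Hypothesis~\eqref{TechnicalAssumption} is exactly $k(0,s)\le K(s)$, while the regularity conditions of Proposition~\ref{Prop:comparison} are immediate ($|K'(s)|\le n_{\infty,M+\eps}(0)$, and the local $L^\infty_{\mathrm{loc}}$ theory from \cite{MR2103197,MR2226917} provides the needed bounds on $k$ on any finite $[0,T]$). The proposition then produces
\[
k(t,s)\le K(s)\quad\forall\,(t,s)\in[0,+\infty)\times(0,+\infty)\,.
\]

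Converting this integral estimate into a pointwise bound is straightforward. Since $n_*(t,\cdot)$ is non-increasing one has $s\,n_*(t,s)\le k(t,s)\le K(s)$. The function $K$ is concave (its derivative $K'=(n_{\infty,M+\eps})_*$ is non-increasing) and $K(0)=0$, so $s\mapsto K(s)/s$ is itself non-increasing with $\lim_{s\to 0^+}K(s)/s=K'(0^+)=n_{\infty,M+\eps}(0)$. Consequently
\[
\|n(t,\cdot)\|_{L^\infty(\R^2)}=n_*(t,0^+)\le n_{\infty,M+\eps}(0)\qquad\forall\,t\ge 0\,,
\]
and the $L^p$ bounds for $p\in[1,\infty)$ follow from the trivial interpolation $\|n(t,\cdot)\|_{L^p}^p\le\|n(t,\cdot)\|_{L^\infty}^{p-1}\,M$. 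For $\nabla c=\nabla G_2\ast n$, with $|\nabla G_2(x)|=(2\pi|x|)^{-1}$, I would split the convolution at $|x-y|=1$: the far-field piece contributes at most $M/(2\pi)$, and H\"older's inequality on the near field against any exponent $q>2$ gives
\[
\|\nabla c(t,\cdot)\|_{L^\infty(\R^2)}\le C_q\,\|n(t,\cdot)\|_{L^q(\R^2)}+\frac{M}{2\pi}\,,
\]
which is uniformly bounded in $t$ by the previous step.

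The main technical point is the first one: that $K$ satisfies the \emph{equality} version of the rearranged equation, reflecting the fact that the isoperimetric inequality underlying Lemma~\ref{Lem:CompSym} is saturated on radially decreasing profiles---this is precisely why the stationary profile of slightly larger mass $M+\eps$ is the natural barrier and why Assumption~\eqref{TechnicalAssumption} is written in this form. Once Proposition~\ref{Prop:comparison} has been applied, the remainder is mechanical: the passage from $k\le K$ to $\|n(t,\cdot)\|_{L^\infty}\le n_{\infty,M+\eps}(0)$ uses only the monotonicity of $n_*$ and the concavity of $K$, and the Riesz estimate on $\nabla c$ is classical.
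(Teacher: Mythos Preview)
Your proof is correct and follows the same strategy as the paper: verify that the cumulated mass $K(s)=M_\eps(s)$ of the stationary profile $n_{\infty,M+\eps}$ satisfies the \emph{equality} version of the rearranged equation, then invoke Proposition~\ref{Prop:comparison} to obtain $k(t,s)\le K(s)$ uniformly in time. The only difference is in how the $L^p$ bounds are read off: the paper cites a majorization inequality \cite[Lemma~1.33]{MR853732} to pass directly from $k\le K$ to $\|n_*\|_{L^p}\le\|K'\|_{L^p}=\|n_{\infty,M+\eps}\|_{L^p}$ for every $p$, while you first extract the $L^\infty$ bound $\|n(t,\cdot)\|_{L^\infty}\le K'(0^+)=n_{\infty,M+\eps}(0)$ via the concavity of $K$ and then interpolate with mass conservation. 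Your route is more self-contained and perfectly adequate for the statement; the paper's gives the marginally sharper constant $C_1=\|n_{\infty,M+\eps}\|_{L^p}$. Your explicit Riesz-potential splitting for $\nabla c$ is also fine---the paper leaves that step implicit.
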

%---------------------------------------------------------------------
\begin{proof} The function $M_\eps(s):=\int_{B(0,\sqrt{s/\pi})}n_{\infty,M+\eps}\;dx$ satisfies
\[
4\,\pi\,s\,M''_\eps+2\,s\,M_\eps'+M_\eps\,M_\eps'=0\;.
\]
By direct application of Proposition~\ref{Prop:comparison}, we obtain
\[
k(t,s)\leq M_\eps(s)\quad\forall\;(t,s)\in Q\;.
\]
By \cite[Lemma 1.33]{MR853732}, we deduce
\[
\norm{n_*}_{L^p(0,\infty)}\leq \norm{M_{\eps}'}_{L^p(0,\infty)}\;,
\]
which yields the result. More details on $M_\eps$ and \emph{cumulated densities} will be given in Section~\ref{Sec:SpectralGap}.
\end{proof}

%%%%%%%%%%%%%%%%%%%%%%%%%%%%%%%%%%%%%%%%%%%%%%%%%%%%%%%%%%%%%%%%%%%%%%
%%%%%%%%%%%%%%%%%%%%%%%%%%%%%%%%%%%%%%%%%%%%%%%%%%%%%%%%%%%%%%%%%%%%%%
\section{Uniform convergence}\label{Sec:Uniform}

Wit the boundedness results of Section~\ref{Sec:Symmetrization} in hands, we can now prove a result of uniform convergence for $n$ and $\nabla c$, if $(n,c)$ is given as a solution of \eqref{RKS} satisfying the assumptions of Theorem~\ref{Thm:Main}.

\medskip Consider the kernel associated to the Fokker-Planck equation
\[
K(t,x,y):=\frac 1{2\pi\,(1-e^{-\,2\,t})}\,e^{-\frac 12\,\frac{|x-e^{-t}y|^2}{1-e^{-\,2\,t}}}
\quad x\in\R^2\,,\quad y\in\R^2\,,\quad t>0\;.
\]
This definition deserves some explanations. If $n$ is a solution of
\[
\dr{n}{t} = \Delta n+\nabla\cdot(n\,x)
\]
with initial datum $n_0$, then $u(\tau,\xi)=R^{-\,2}\,n\(\log R,R^{-1}\,\xi\)$ with $R=R(\tau)=\sqrt{1+2\,\tau}$ is a solution of the heat equation
\[
\dr{u}{\tau} = \Delta u\;,\quad u(\tau=0,\cdot)=n_0\;,
\]
whose solution is given by
\[
u(\xi,\tau)=\frac 1{4\,\pi\,\tau}\int_{\R^2}e^{-\frac{|\xi-y|^2}{4\tau}}\,n_0(y)\;dy\;.
\]
By undoing the change of variables, we get that the solution of the Fokker-Planck equation is given by
\[
n(t,x)=\int_{\R^2}K(t,x,y)\,n_0(y)\;dy\;.
\]
Consider now a solution of \eqref{RKS}. We have the following Duhamel formula.
%---------------------------------------------------------------------
\begin{Lemma}\label{Lem:Duhamel} Assume that $n$ is a solution of \eqref{RKS} with initial data satisfying \eqref{cond}. Then for any $t>0$, $x\in\R^2$, we have
\[
n(t,x)=\int_{\R^2}K(t,x,y)\,n_0(y)\;dy+\int_0^t\int_{\R^2}\nabla_xK(t-s,x,y)\cdot n(s,y)\,\nabla c(s,y)\;dy\;ds\;.
\]
\end{Lemma}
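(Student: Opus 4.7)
The plan is to apply a variation-of-constants (Duhamel) formula, viewing \eqref{RKS} as a perturbation of the linear Fokker--Planck equation $\partial_t n = \Delta n + \nabla\!\cdot(xn)$, whose fundamental solution is precisely the kernel~$K$.

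First I would verify that $u(t,x) := \int_{\R^2} K(t,x,y)\,f(y)\,dy$ solves the linear Fokker--Planck initial-value problem with datum~$f$. This is essentially already carried out in the paragraph preceding the statement: the self-similar change of variables $u(\tau,\xi) = R^{-2}\,n(\log R, R^{-1}\xi)$, $R = \sqrt{1+2\tau}$, maps the Gaussian heat kernel onto the Mehler kernel~$K$, so the classical heat-equation representation transfers verbatim. A direct differentiation of the explicit formula yields the same conclusion and verifies that $K(t,x,\cdot)\to\delta_x$ as $t\to 0^+$.

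Second, I would rewrite \eqref{RKS} as $\partial_t n - \Delta n - \nabla\!\cdot(xn) = -\nabla\!\cdot(n\,\nabla c)$, view the right-hand side as an inhomogeneity, and apply Duhamel's principle:
\[
n(t,x) = \int_{\R^2} K(t,x,y)\,n_0(y)\,dy - \int_0^t \int_{\R^2} K(t-s,x,y)\,\nabla_y\!\cdot\bigl(n(s,y)\,\nabla c(s,y)\bigr)\,dy\,ds\,.
\]
A spatial integration by parts in~$y$ then transfers the divergence onto the kernel, after which the Mehler identity $\nabla_y K(t,x,y) = -e^{-t}\,\nabla_x K(t,x,y)$, visible by inspection from the explicit Gaussian form, rewrites the perturbation in terms of $\nabla_x K$ as in the stated formula.

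Third, the formal manipulations must be justified. By the regularity results for \eqref{RKS} recalled in the introduction (and, under the stronger hypotheses of Theorem~\ref{Thm:Main}, uniformly via Theorem~\ref{Thm:UniformBound}), $n$ lies locally in $L^\infty(0,T;L^p(\R^2))$ for every $p\in(1,\infty)$, while the Poisson equation $-\Delta c = n$ controls $\nabla c$ in $L^\infty_{\rm loc}$ in time. Together with the Gaussian decay of~$K$ and~$\nabla K$ (whose time singularity as $s\to t^-$ is integrable), this justifies Fubini, the vanishing of boundary terms at spatial infinity, and the integration by parts. I expect this bookkeeping to be the main but essentially routine obstacle, handled in the standard way by approximating $n_0$ with smooth compactly supported data, establishing the identity for the approximants, and passing to the limit by dominated convergence.
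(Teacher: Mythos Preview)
Your approach is exactly the standard one the paper has in mind. In fact the paper gives no argument beyond the single sentence ``This is a standard fact whose proof relies on the fact that $(t,x)\mapsto K(t,x,y)$ is a solution of the Fokker--Planck equation with a $\delta$-Dirac function initial value. Details are left to the reader.'' Your sketch (Duhamel for the Fokker--Planck semigroup, then integration by parts in~$y$) \emph{is} those details, and the regularity justifications you outline are the right ones.

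One small point worth flagging: your Mehler-type identity $\nabla_y K(t,x,y)=-e^{-t}\nabla_x K(t,x,y)$ is correct, but applying it after the integration by parts produces
\[
-\int_0^t e^{-(t-s)}\int_{\R^2}\nabla_x K(t-s,x,y)\cdot n(s,y)\,\nabla c(s,y)\;dy\,ds,
\]
not the bare $+\nabla_x K$ term written in the lemma. So your phrase ``as in the stated formula'' is a slight overreach: the honest output of your argument has the kernel $\nabla_y K$ (equivalently $-e^{-(t-s)}\nabla_x K$). This looks like a harmless typo in the paper's statement rather than an error in your reasoning; indeed, in the estimates that follow the paper only uses $\|\nabla K(s,\cdot,\cdot)\|_{L^\infty_x L^r_y}$, and $\nabla_x K$ and $\nabla_y K$ differ by the bounded factor $e^{\pm s}$ on $s\in(0,1)$, so nothing downstream is affected. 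Just be aware of the discrepancy rather than claiming the identity lands exactly on the displayed formula.
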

%---------------------------------------------------------------------
This is a standard fact whose proof relies on the fact that $(t,x)\mapsto K(t,x,y)$ is a solution of the Fokker-Planck equation with a $\delta$-Dirac function initial value. Details are left to the reader.

\medskip Using the semi-group property, we deduce from Lemma~\ref{Lem:Duhamel} the expression for $n(t+1,x)$ in terms of $n(t,x)$ as
\[
n(t+1,x)=\int_{\R^2}K(t,x,y)\,n(t,y)\;dy+\int_t^{t+1}\int_{\R^2}\nabla_xK(t+1-s,x,y)\cdot n(s,y)\,\nabla c(s,y)\;dy\;ds
\]
for any $t\ge 0$. Since $\ninf$ is a stationary solution, we can also write that
\[
\ninf(x)=\int_{\R^2}K(t,x,y)\,\ninf(y)\;dy+\int_t^{t+1}\int_{\R^2}\nabla_xK(t+1-s,x,y)\cdot \ninf(y)\,\nabla \cinf(y)\;dy\;ds
\]
for any $t\ge 0$. By taking the difference of the two expressions, we get that
\begin{multline*}
n(t+1,x)-\ninf(x)=\int_{\R^2}K(t,x,y)\,(n(t,y)-\ninf(y))\;dy\\[6pt]
+\int_t^{t+1}\int_{\R^2}\nabla_xK(t+1-s,x,y)\cdot \(n(s,y)\,\nabla c(s,y)-\ninf(y)\,\nabla \cinf(y)\)\,dy\;.
\end{multline*}
This provides a straightforward estimate, which goes as follows:
\begin{multline*}
\nrm{n(t+1, \cdot)-\ninf}\infty\le\|K(t,\cdot,\cdot)\|_{L^\infty(\R^2_x;L^r(\R^2_y)}\,\nrm{n(t,\cdot)-\ninf}1\\[6pt]
+\int_0^1\|\nabla K(s,\cdot,\cdot)\|_{L^\infty(\R^2_x;L^r(\R^2_y))}\,ds\;\mathcal R(t)
\end{multline*}
where $\frac 1p+\frac 1q+\frac 1r=1$ with $p\in(2,\infty)$, $q\in[2,\infty)$ and $r\in(1,2)$, and
\[
\mathcal R(t):=\sup_{s\in(t,t+1)}\(\nrm{n(s,\cdot)}p\,\nrm{\nabla c(s,\cdot)-\nabla \cinf}q+\nrm{n(s,\cdot)-\ninf}p\,\nrm{\nabla \cinf}q\)\,.
\]
A direct computation shows that
\[
\nabla K(t,x,\cdot)=\frac{e^{-t}}{2\pi\,(1-e^{-\,2\,t})^2}\,(e^t\,x-y)\,e^{-\frac{|e^tx-y|^2}{2\,(e^{2t}-1)}}
\]
and hence, for some explicit, finite function $r\mapsto\kappa(r)$,
\[
\|\nabla K(t,x,\cdot)\|_{L^r(\R^2_y)}=\frac{e^{-t}}{2\pi\,(1-e^{-2t})^2}\(\int_{\R^2}|z|^r\,e^{-\frac{r\,|z|^2}{2\,(e^{2t}-1)}}\,dz\)^\frac 1r=\kappa(r)\,e^{3t}\,(e^{2t}-1)^\frac{2-3\,r}{2\,r}
\]
is integrable in $t\in(0,1)$ if $r\in[1,2)$. On the other hand, $\mathcal R(t)$ converges to $0$ by Theorem~\ref{Thm:UniformBound} and the fact that
\[
\lim_{t\to\infty}\nrm{n(t,\cdot)-\ninf}1=0\quad\textrm{and}\quad\lim_{t\to\infty}\nrm{\nabla c(t,\cdot)-\nabla\cinf}2=0
\]
according to \cite[Theorem 1.2]{MR2226917}. Hence we have shown the \emph{uniform convergence} of $n$ towards~$\ninf$ as $t\to\infty$ and
\[
\lim_{t\to\infty}\nrm{n(t,\cdot)-\ninf}p=0
\]
for any $p\in[1,\infty]$, by H\"older's interpolation. As for the convergence of $\nabla c(t,\cdot)$ towards $\nabla\cinf$ as $t\to\infty$ in $L^q(\R^2)$ for $q\in(2,\infty]$, we need one more interpolation inequality.
%---------------------------------------------------------------------
\begin{Lemma}\label{Lem:Linfty} If $h=(-\Delta)^{-1}\rho$ for some function $\rho\in L^{2-\varepsilon}\cap L^{2+\varepsilon}(\R^2)$, with $\varepsilon\in(0,1)$, then there exists an explicit positive constant $C=C(\varepsilon)$ such that
\[
\nrm{\nabla h}\infty\le C\,\(\nrm\rho{2-\varepsilon}+\nrm\rho{2+\varepsilon}\)\,.
\]
\end{Lemma}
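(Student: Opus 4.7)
The plan is to exploit the explicit form of the convolution kernel. Since $h=G_2\ast\rho$ with $G_2(x)=-\frac{1}{2\pi}\log|x|$, one has the pointwise bound
\[
|\nabla h(x)|\le\frac{1}{2\pi}\int_{\R^2}\frac{|\rho(y)|}{|x-y|}\,dy\,,
\]
so everything reduces to a uniform-in-$x$ estimate for a Riesz-type potential of $|\rho|$. The obstruction to a pure $L^2$ bound is that $z\mapsto 1/|z|$ belongs to no Lebesgue space on $\R^2$: it is locally in $L^q$ only for $q<2$ and integrable at infinity only for $q>2$. The two hypotheses $\rho\in L^{2-\eps}$ and $\rho\in L^{2+\eps}$ are tailored precisely to bridge this gap from both sides.

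With this in mind I would split the integration at the unit ball $B(x,1)$ and apply H\"older on each piece with the exponent best suited to the corresponding region. On $B(x,1)$, pair $\rho\in L^{2+\eps}$ with $1/|x-y|$ in $L^{q_+}$ where $q_+=(2+\eps)/(1+\eps)<2$; a polar-coordinate computation yields
\[
\int_{B(x,1)}|x-y|^{-q_+}\,dy \;=\; 2\pi\int_0^1 r^{-1/(1+\eps)}\,dr \;=\; \frac{2\pi(1+\eps)}{\eps}\,.
\]
On $\R^2\setminus B(x,1)$, pair $\rho\in L^{2-\eps}$ with $1/|x-y|$ in $L^{q_-}$ where $q_-=(2-\eps)/(1-\eps)>2$; the analogous calculation gives
\[
\int_{\R^2\setminus B(x,1)}|x-y|^{-q_-}\,dy \;=\; 2\pi\int_1^\infty r^{-1/(1-\eps)}\,dr \;=\; \frac{2\pi(1-\eps)}{\eps}\,.
\]

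Summing the two H\"older estimates then produces, with an explicit $\eps$-dependent constant $C(\eps)$ obtained by taking the appropriate powers $(1\pm\eps)/(2\pm\eps)$ of the two quantities above, the bound $|\nabla h(x)|\le C(\eps)\bigl(\nrm{\rho}{2+\eps}+\nrm{\rho}{2-\eps}\bigr)$ uniformly in $x$, which is the desired estimate. There is no genuine obstacle in this argument; the only delicate point is the matching of H\"older pairs so that $1/|\cdot|^{q_\pm}$ is integrable on the corresponding region, and it is worth noting that $C(\eps)$ necessarily blows up as $\eps\to 0^+$, reflecting the well-known failure of $\nabla(-\Delta)^{-1}$ to map $L^2(\R^2)$ into $L^\infty(\R^2)$.
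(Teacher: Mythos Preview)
Your argument is correct and is essentially identical to the paper's own proof: the same pointwise bound $|\nabla h(x)|\le\frac{1}{2\pi}\int\frac{|\rho(y)|}{|x-y|}\,dy$, the same splitting at the unit ball $B(x,1)$, and the same H\"older pairing (with $L^{2+\eps}$ near the singularity and $L^{2-\eps}$ at infinity), yielding exactly the constants $C_i(\eps)=\frac{1}{2\pi}\bigl(2\pi(1\pm\eps)/\eps\bigr)^{(1\pm\eps)/(2\pm\eps)}$ that the paper records. Your closing observation that $C(\eps)\to\infty$ as $\eps\to0^+$ is a welcome addition.
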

%---------------------------------------------------------------------
\begin{proof} This follows by a direct computation. We can estimate $|\nabla h|$ by
\[
|\nabla h(x)|=\frac 1{2\pi}\int_{\R^2}\frac{\rho(y)}{|x-y|}\;dy
\]
for any $x\in\R^2$ and split the integral into two pieces corresponding to $|x-y|<1$ and $|x-y|\ge1$: by H\"older's inequality, we obtain that
\[
\frac 1{2\pi}\int_{|x-y|<1}\frac{\rho(y)}{|x-y|}\;dy\le C_1(\varepsilon)\,\nrm\rho{2+\varepsilon}
\]
with $C_1(\varepsilon)=\frac 1{2\pi}\(2\pi\,(1+\varepsilon)/\varepsilon\)^{(1+\varepsilon)/(2+\varepsilon)}$ and
\[
\frac 1{2\pi}\int_{|x-y|\ge 1}\frac{\rho(y)}{|x-y|}\;dy\le C_2(\varepsilon)\,\nrm\rho{2-\varepsilon}
\]
with $C_2(\varepsilon)=\frac 1{2\pi}\(2\pi\,(1-\varepsilon)/\varepsilon\)^{(1-\varepsilon)/(2-\varepsilon)}$. The conclusion holds with $C=\max_{i=1,2}C_i$.\end{proof}

Hence we have also shown the \emph{uniform convergence} of $\nabla c$ towards $\nabla\cinf$ as $t\to\infty$. By H\"older's interpolation, the convergence holds in $L^q(\R^2)$ for any $q\in[2,\infty]$. Summarizing all results of this section, we have shown the following limits.
%---------------------------------------------------------------------
\begin{Corollary}\label{Cor:UniformCV} Assume that $n$ is a solution of \eqref{RKS} with initial data satisfying the assumptions of Theorem~\ref{Thm:Main}. Then 
\[
\lim_{t\to\infty}\nrm{n(t,\cdot)-\ninf}p=0\quad\textrm{and}\quad\lim_{t\to\infty}\nrm{\nabla c(t,\cdot)-\nabla\cinf}q=0
\]
for any $p\in[1,\infty]$ and any $q\in[2,\infty]$.
\end{Corollary}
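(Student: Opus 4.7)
\medskip\noindent\textbf{Proof plan.} The plan is to bootstrap the already available $L^1$ convergence of $n-\ninf$ and $L^2$ convergence of $\nabla c-\nabla\cinf$ from \cite[Theorem~1.2]{MR2226917} up to the $L^\infty$ level, using the mild (Duhamel) formulation combined with the uniform $L^p$ bounds provided by Theorem~\ref{Thm:UniformBound}, and then interpolate.

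First, I would apply Lemma~\ref{Lem:Duhamel} on the interval $[t,t+1]$ to both $n$ and the stationary solution $\ninf$ of \eqref{RKS} (using the semigroup property to restart at time $t$), and subtract the two identities. This produces a representation of the form
\begin{multline*}
n(t+1,x)-\ninf(x)=\int_{\R^2}K(t,x,y)\,\bigl(n(t,y)-\ninf(y)\bigr)\,dy\\
+\int_t^{t+1}\!\int_{\R^2}\nabla_x K(t+1-s,x,y)\cdot\bigl(n(s,y)\,\nabla c(s,y)-\ninf(y)\,\nabla\cinf(y)\bigr)\,dy\,ds,
\end{multline*}
which isolates the drift nonlinearity from the pure diffusion and allows us to trade strong norms on the uniformly bounded factors for weak norms on the differences that are known to vanish.

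Next, I would estimate the first term by H\"older's inequality in $y$ between $K(t,\cdot,\cdot)\in L^\infty_x(L^\infty_y)$ and $n(t,\cdot)-\ninf\in L^1$, and estimate the drift term by a three-factor H\"older inequality with $\tfrac1p+\tfrac1q+\tfrac1r=1$, $p\in(2,\infty)$, $q\in[2,\infty)$, $r\in(1,2)$. An explicit Gaussian computation gives $\|\nabla K(s,\cdot,\cdot)\|_{L^\infty_x(L^r_y)}=\kappa(r)\,e^{3s}\,(e^{2s}-1)^{(2-3r)/(2r)}$, which is integrable on $(0,1)$ precisely when $r<2$. Thanks to Theorem~\ref{Thm:UniformBound} (uniform $L^p$ bound on $n$, hence on $\nabla c$ via Lemma~\ref{Lem:Linfty}), the quantity
\[
\mathcal R(t):=\sup_{s\in(t,t+1)}\bigl(\nrm{n(s,\cdot)}p\,\nrm{\nabla c(s,\cdot)-\nabla\cinf}q+\nrm{n(s,\cdot)-\ninf}p\,\nrm{\nabla\cinf}q\bigr)
\]
tends to $0$ as $t\to\infty$, because each factor either vanishes by the previously known $L^1/L^2$ convergence or stays uniformly bounded. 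This delivers $\nrm{n(t,\cdot)-\ninf}\infty\to 0$.

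Finally, H\"older interpolation between $L^1$ and $L^\infty$ extends the convergence of $n$ to every $L^p$, $p\in[1,\infty]$. For the chemoattractant, I would apply Lemma~\ref{Lem:Linfty} with $\rho:=n(t,\cdot)-\ninf$, whose $L^{2\pm\varepsilon}$ norms vanish by the previous step, to get $L^\infty$ convergence of $\nabla c-\nabla\cinf$, and then interpolate with the $L^2$ convergence to cover every $q\in[2,\infty]$. The main obstacle is to keep the singularity of $\nabla K$ at $s=0$ integrable, which forces the constraint $r<2$; the freedom afforded by Theorem~\ref{Thm:UniformBound} to pick $p$ and $q$ in the complementary range is precisely what makes the three-factor H\"older estimate close.
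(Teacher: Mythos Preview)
Your proposal is correct and follows the paper's argument essentially line for line: the same Duhamel representation on $[t,t+1]$, the same three-factor H\"older split with $\tfrac1p+\tfrac1q+\tfrac1r=1$ and $r\in(1,2)$, the same quantity $\mathcal R(t)$, and the same use of Lemma~\ref{Lem:Linfty} followed by interpolation. The only point worth making explicit is that showing $\mathcal R(t)\to 0$ requires an interpolation step you leave implicit: since $q>2$ is forced by $r<2$, the factor $\nrm{\nabla c(s,\cdot)-\nabla\cinf}q$ does not vanish directly from the known $L^2$ convergence but from interpolating it against the uniform $L^\infty$ bound of Theorem~\ref{Thm:UniformBound}, and likewise $\nrm{n(s,\cdot)-\ninf}p$ with $p>2$ uses $L^1$ convergence together with the uniform $L^\infty$ bound.
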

%---------------------------------------------------------------------

%%%%%%%%%%%%%%%%%%%%%%%%%%%%%%%%%%%%%%%%%%%%%%%%%%%%%%%%%%%%%%%%%%%%%%
%%%%%%%%%%%%%%%%%%%%%%%%%%%%%%%%%%%%%%%%%%%%%%%%%%%%%%%%%%%%%%%%%%%%%%
\section{Spectral gap of the linearized operator {\texorpdfstring{$\mathcal L$}{L}}}\label{Sec:Linearization}

Assume that $n$ is a solution of \eqref{RKS} and consider $f$ and $g$ defined for any $(t,x)\in\R^+\times\R^2$~by
\[
n(t,x)=\ninf(x)\,(1+f(t,x))\quad\textrm{and}\quad c(t,x)=\cinf(x)\,(1+g(t,x))\;.
\]
Then $(f,g)$ is a solution of the nonlinear problem
\beq
\left\{\begin{array}{lcr}
\dr{f}{t} -\L\,f = -\frac1{\ninf} \nabla\cdot\parcuad{f\,\ninf\,\nabla(g\,\cinf)}& x\in\R^2\,, &t>0\;,\\[6pt]
-\Delta (g\,\cinf) = f\,\ninf & x\in \R^2\,, &t>0\;,
\end{array}\right.
\label{pblin}
\eeq
where $\L$ is the linear operator
\[
\L\,f=\frac1{\ninf}\nabla\cdot\parcuad{\ninf\nabla(f-g\,\cinf)}.
\]
Such a formulation has already been used in \cite{Blanchet2010533}, but there only estimates of the lowest eigenvalues of $\L$ were considered. Here we are going to establish the exact value of the gap. The goal of this section is indeed to establish that $\mathcal L$ has a spectral gap in an appropriate functional setting. To characterize the spectrum of $\mathcal L$, it is necessary to specify the domain of the operator $\mathcal L$. Heuristically, it is simpler to identify the eigenfunctions corresponding to the lowest eigenvalues and define only afterwards the norm for which $\mathcal L$ turns out to be self-adjoint. We will
\\- identify some eigenfunctions of the linearized Keller-Segel operator $\mathcal L$ in Section~\ref{Sec:SomeEigenfunctions},
\\- characterize the kernel of $\mathcal L$ in Section~\ref{Sec:Kernel},
\\- determine an adapted functional setting for $\mathcal L$ and related operators in Section~\ref{Sec:FunctSetting},
\\- show that the spectrum of $\mathcal L$ is discrete in Section~\ref{Sec:Discrete},
\\- and finally establish a spectral gap inequality in Section~\ref{Sec:SpectralGap}.

%%%%%%%%%%%%%%%%%%%%%%%%%%%%%%%%%%%%%%%%%%%%%%%%%%%%%%%%%%%%%%%%%%%%%%
\subsection{Some eigenfunctions of the linearized Keller-Segel operator {\texorpdfstring{$\mathcal L$}{L}}}\label{Sec:SomeEigenfunctions}

Using the fact that $\ninf$ depends on $x=(x_1,x_2)\in\R^2$ and on the mass parameter $M$, we observe that the functions
\ben
f_{0,0}&=&\partial_M\log n_{\infty,M}\;,\\[6pt]
f_{1,i}&=&\partial_{x_i}\log n_{\infty,M}\;,\quad i=1\,,\;2\;,\\[6pt]
f_{0,1}&=&1+\tfrac 12\,x\cdot\nabla\log n_{\infty,M}\;,
\een
are eigenfunctions of $\L$. Here $\partial_Mn_{\infty,M}$ denotes the derivative of the function $\ninf=n_{\infty,M}$ with respect to the mass parameter $M$, while $\partial_{x_i}$ stands for $\partial/\partial_{x_i}$. We shall use two indices for the numbering of the eigenfunctions because of a spherical harmonics decomposition that will be studied in Section~\ref{Sec:SpectralGap}. A precise statements goes as follows.
%---------------------------------------------------------------------
\begin{Lemma}\label{Lem:Eigenfunctions} With the above notations, we have
\ben
\L\,f_{0,0} &=&0\;,\\[6pt]
\L\,f_{1,i} &=&-f_{1,i}\;,\\[6pt]
\L\,f_{0,1} &=&-\,2\,f_{0,1}\;.
\een
\end{Lemma}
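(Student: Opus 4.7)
The plan is to exploit the stationary identity $\log n_{\infty,M}=\cinf-\tfrac12|x|^2+\lambda(M)$ obtained by taking logarithms in~\eqref{StatKS} (with $\lambda(M)$ a normalization constant depending only on $M$), together with the rewriting
\[
\mathcal L f=\frac1{\ninf}\,\nabla\cdot\bigl[\ninf\,\nabla(f-\psi_f)\bigr],\qquad-\Delta\psi_f=f\,\ninf.
\]
For each of the three candidates, the strategy is: (i) exhibit a closed-form $\psi_f$ and verify $-\Delta\psi_f=f\,\ninf$; (ii) use the stationary identity to simplify $f-\psi_f$; (iii) plug into the formula. Since $\psi_f$ enters $\mathcal L f$ only through $\nabla(f-\psi_f)$, any ambiguity in $\psi_f$ up to additive constants (and more generally harmonic functions, which are ruled out by the Gaussian-type decay of $\ninf$ inherited from~\eqref{StatKS}) is harmless.

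For $f_{0,0}=\partial_M\log n_{\infty,M}$, differentiating $-\Delta\cinf=\ninf$ with respect to $M$ yields $-\Delta(\partial_M\cinf)=\partial_M\ninf=\ninf\,f_{0,0}$, so I take $\psi_{f_{0,0}}=\partial_M\cinf$; differentiating the stationary identity in $M$ gives $f_{0,0}-\partial_M\cinf=\lambda'(M)$, which is constant in $x$. Hence $\nabla(f_{0,0}-\psi_{f_{0,0}})\equiv0$ and $\mathcal L f_{0,0}=0$. For $f_{1,i}=\partial_{x_i}\log n_{\infty,M}$, the relation $f_{1,i}\,\ninf=\partial_{x_i}\ninf$ combined with $-\Delta(\partial_{x_i}\cinf)=\partial_{x_i}\ninf$ identifies $\psi_{f_{1,i}}=\partial_{x_i}\cinf$; the stationary identity now gives $f_{1,i}-\partial_{x_i}\cinf=-x_i$, so
\[
\mathcal L f_{1,i}=\tfrac1{\ninf}\,\nabla\cdot\bigl[\ninf\,\nabla(-x_i)\bigr]=-\partial_{x_i}\ninf/\ninf=-f_{1,i}.
\]

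For $f_{0,1}=1+\tfrac12\,x\cdot\nabla\log n_{\infty,M}$, writing $\ninf\,\nabla\log\ninf=\nabla\ninf$ and using $\nabla\cdot(x\,\ninf)=2\ninf+x\cdot\nabla\ninf$ gives $f_{0,1}\,\ninf=\tfrac12\,\nabla\cdot(x\,\ninf)$. A short index computation, using $\Delta\cinf=-\ninf$, shows $-\Delta(x\cdot\nabla\cinf)=\nabla\cdot(x\,\ninf)$, so I take $\psi_{f_{0,1}}=\tfrac12\,x\cdot\nabla\cinf$. The stationary identity then gives $f_{0,1}-\psi_{f_{0,1}}=1-\tfrac12|x|^2$, hence $\nabla(f_{0,1}-\psi_{f_{0,1}})=-x$ and $\mathcal L f_{0,1}=-\tfrac1{\ninf}\,\nabla\cdot(x\,\ninf)=-2f_{0,1}$.

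The main obstacle is really only in this last case: one must spot the identity $-\Delta(x\cdot\nabla\cinf)=\nabla\cdot(x\,\ninf)$ so as to obtain a closed form for $\psi_{f_{0,1}}$. Once that identity is in hand, the three eigenvalue relations follow in one line each, and every other manipulation is just bookkeeping from the stationary equation~\eqref{StatKS} and from the definition of $\mathcal L$.
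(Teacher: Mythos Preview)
Your argument is correct. The underlying idea is the same as the paper's --- exploit the three invariances (mass, translation, scaling) of the stationary problem to manufacture eigenfunctions --- but your execution is noticeably more direct. The paper differentiates the full stationary \emph{equation} $\Delta\ninf+\nabla\cdot(x\,\ninf)-\nabla\cdot(\ninf\nabla\cinf)=0$ with respect to $M$, $x_i$, and the dilation $D=x\cdot\nabla$, and then manipulates the resulting identities; for $f_{0,1}$ this requires a string of commutator-type relations for~$D$ (e.g.\ $D(\nabla\cdot a)=\nabla\cdot(Da)-\nabla\cdot a$, $D(\nabla\ninf)=\nabla(D\ninf)-\nabla\ninf$, etc.) before the eigenvalue $2$ emerges. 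You instead differentiate the explicit \emph{formula} $\log\ninf=\cinf-\tfrac12|x|^2+\lambda(M)$, which immediately hands you $f-\psi_f$ in closed form ($\lambda'(M)$, $-x_i$, and $1-\tfrac12|x|^2$ respectively), reducing each eigenvalue computation to a one-line evaluation of $\tfrac1{\ninf}\nabla\cdot(\ninf\nabla(\cdot))$. The identity $-\Delta(x\cdot\nabla\cinf)=\nabla\cdot(x\,\ninf)$ that you flag is exactly the content of the paper's relation $D(\Delta c)=\Delta(Dc)-2\Delta c$, so the two routes are equivalent at that step; your packaging simply avoids the surrounding bookkeeping.
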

%---------------------------------------------------------------------
\begin{proof}
Assume that $M\in(0,8\,\pi)$ and consider the unique solution $\ninf$ of \eqref{StatKS}, which is also the unique stationary solution of \eqref{KS} such that \eqref{cond} holds. For brevity, we shall omit to mention the dependence of $\ninf=n_{\infty,M}$ in $M$.

\medskip Let us differentiate with respect to $M$ each term of $\Delta\,\ninf+\nabla\cdot(\ninf\,x)-\nabla\cdot(\ninf\nabla \cinf)=0$, where $\cinf=G_2*\ninf$. It is straightforward to check that $g_{0,0}:=\partial_M\log\cinf$ is such that $g_{0,0}\,\cinf=G_2\ast (f_{0,0}\,\ninf)$ and $\L\,f_{0,0}=0$. Since
\[
-\Delta\,\cinf=M\,\frac{e^{\cinf-\frac 12\,|x|^2}}{\ix{e^{\cinf-\frac 12\,|x|^2}}}=\ninf\;,
\]
it is clear that $g_{0,0}$ is non-trivial, and therefore $f_{0,0}=\partial_M\log \ninf$ is a non-trivial solution to $\L\,f=0$.

\medskip By computing
\[
0=\frac\partial{\partial x_1}\Big(\Delta \ninf+\nabla\cdot(x\,\ninf)-\nabla\cdot(\ninf\,\nabla \cinf)\Big)\quad\mbox{with}\quad -\Delta\(\frac{\partial \cinf}{\partial x_1}\)=\frac{\partial \ninf}{\partial x_1}
\]
and observing that
\[
\frac\partial{\partial x_1}\nabla\cdot(x\,\ninf)=\frac{\partial \ninf}{\partial x_1}+\nabla\cdot\(x\,\frac{\partial \ninf}{\partial x_1}\)\,,
\]
we obtain that $f_{1,1}:=\partial_{x_1}\log\ninf$ associated with $g_{1,1}=\frac 1{\cinf}\,\partial_{x_1}\cinf$ is an eigenfunction of $\L$, such that $-\L\,f_{1,1}=f_{1,1}$. The same observation holds if we differentiate with respect to $x_i$,~$i=2$.

\medskip Next consider the dilation operator $D:=x\cdot\nabla$. If $a$ is a vector valued function, an elementary computation shows that
\[
D\,(\nabla\cdot a)=\nabla\cdot(D\,a)-\nabla\cdot a\;.
\]
Since $a=\nabla\ninf+x\,\ninf-\ninf\,\nabla \cinf$ is such that $\nabla\cdot a=0$, we get $D\,(\nabla\cdot a)=\nabla\cdot(D\,a)$ and hence
\[
0=D\Big(\Delta \ninf+\nabla\cdot(x\,\ninf)-\nabla\cdot(\ninf\,\nabla \cinf)\Big)=\nabla\cdot D\,\Big(\nabla\ninf+x\,\ninf-\ninf\,\nabla \cinf\Big)\,.
\]
Next, we observe that
\[
D\,(\nabla\ninf)=\nabla\,(D\,\ninf)-\nabla\,\ninf
\]
so that
\[
\nabla\cdot D\,(\nabla\ninf)=\Delta(D\,\ninf)-\Delta\,\ninf\;.
\]
It is also straightforward to observe that
\[
D\,(x\,\ninf)=x\,\ninf+x\,D\,\ninf\quad\mbox{and}\quad D\,(\nabla \cinf)=\nabla\,(D\,\cinf)-\nabla\,\cinf\;.
\]
Let $f_{0,1}=1+\frac 12\,D\,\log \ninf=1+\frac 1{2\,\ninf}\,D\,\ninf$. By writing $D\,(\Delta \cinf+\ninf)=0$, we get
\[
-\Delta\(D\,\cinf\)+2\,\Delta \cinf=D\,\ninf=2\,(f_{0,1}-1)\,\ninf\;,
\]
since
\[
D\,(\Delta c)=\Delta(D\,c)-\,2\,\Delta c\;.
\]
Hence, using the fact that $2\,\Delta\cinf=-\,2\,\ninf$, the function $g_{0,1}:=\frac 1{\cinf}\,(-\Delta)^{-1}(\ninf\,f_{0,1})$ is given by
\[
\cinf\,g_{0,1}=\frac 12\,D\,\cinf\;.
\]
Collecting these identities, we have found that
\[
2\,\ninf\,\L\,(D\,\log \ninf)-\nabla\cdot\left[\nabla\ninf-x\,\ninf-\,2\,\ninf\Big(\nabla(\cinf\,g_{0,1})-\nabla \cinf\Big)+\ninf\,D\,(\nabla \cinf)\right]=0\;.
\]
Using
\[
2\Big(\nabla(\cinf\,g_{0,1})-\nabla \cinf\Big)=\nabla(D\,\cinf)-\,2\,\nabla \cinf=D\,(\nabla \cinf)-\nabla \cinf\;,
\]
this gives
\[
\ninf\,\L\,(D\,\log \ninf)-\Delta \ninf+\nabla\cdot(x\,\ninf-\ninf\,\nabla \cinf)=0\;.
\]
Hence, owing to the fact that $D\,\log \ninf=2\,(f_{0,1}-1)$ and
\[
\ninf\,\L\,(D\,\log \ninf)=2\,\ninf\,\mathcal L f_{0,1}+2\,\nabla\cdot(\ninf\nabla \cinf)\;,
\]
we get
\begin{multline*}
-\,2\,\ninf\,\L\,f_{0,1}=-\Delta \ninf+\nabla\cdot(x\,\ninf+\ninf\,\nabla \cinf)\\
=2\,\nabla\cdot(x\,\ninf)=4\,\ninf\,\(1+\frac{D\,\ninf}{2\,\ninf}\)=4\,\ninf\,f_{0,1}\;.
\end{multline*}
We have finally found that $-\L\,f_{0,1}=2\,f_{0,1}$, which completes the proof.\end{proof}

\begin{Rem} The fact that $1$ and $2$ are eigenvalues of $\L$ was known in the limit $M\to0_+$: see~\cite{Blanchet2010533}. It is remarkable that these two eigenvalues are independent of $M$ but this can be explained by noticing that the corresponding eigenfunctions are associated with invariances of the problem before rescaling.

The functions $\partial_{x_i}\log \ninf$, $i=1$, $2$ correspond to the invariance under translation in the directions~$x_i$. A decentered self-similar solution would converge in self-similar variables to the stationary solution, in relative entropy, exactly at a rate $e^{-t}$, thus showing that $\lambda_{1,1}=\lambda_{1,2}=1$ are eigenvalues by considering the asymptotic regime.

The function $D\,\log \ninf$ is associated with the scaling invariance. In original variables, a scaling factor corresponds to a translation in time at the level of the self-similar solution and it can easily be checked that, in self-similar variables, a solution corresponding to the stationary solution rescaled by a factor different from $1$ converges, in relative entropy, exactly at a rate $e^{-\,2\,t}$, thus showing that $\lambda_{0,1}=2$ is also an eigenvalue by considering the asymptotic regime.
\end{Rem}

%%%%%%%%%%%%%%%%%%%%%%%%%%%%%%%%%%%%%%%%%%%%%%%%%%%%%%%%%%%%%%%%%%%%%%
\subsection{The kernel of the linearized Keller-Segel operator {\texorpdfstring{$\mathcal L$}{L}}}\label{Sec:Kernel}

By definition of $\ninf$, we know that $\log\ninf=\mu_0(M)+\cinf-\frac 12\,|x|^2$, so that $f_{0,0}=\mu_0'(M)+g_{0,0}\,\cinf$ where $g_{0,0}=\partial_M\log\cinf$ is such that $-\Delta(g_{0,0}\,\cinf)=-\Delta f_{0,0}=f_{0,0}\,\ninf$. The normalization constant $\mu_0$ is determined by the condition that $M=\ix{\ninf}$, that is $\mu_0=\log M-\log\big(\ix{e^{\cinf-|x|^2/2}}\big)$. By differentiating with respect to~$M$, we also get that
\[
\mu_0'(M)=\frac 1M\left[1-\ix{g_{0,0}\,\ninf\,\cinf}\right]\;.
\]
The function $f=f_{0,0}$ solves $\L\,f=0$ and is such that the equation for $g=f/\cinf$ reads
\[\label{linearizedP}
-\Delta\,f=\ninf\,f\;.
\]
It is not \emph{a priori} granted that such an equation has at most one solution, up to a multiplication by a constant. The uniqueness issue is the purpose of our next result.
%---------------------------------------------------------------------
\begin{Prop}\label{Prop:KerL} The kernel $\mathop{Ker}(\L)$ is generated by $f_{0,0}=\partial_M\log \ninf$, which is the unique solution in $L^2(\R^2,\ninf\,dx)$, up to a multiplication by a constant, to
\[
-\Delta f_{0,0}=f_{0,0}\,\ninf\;.
\]\end{Prop}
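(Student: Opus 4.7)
The plan is to treat the two parts of the statement separately: existence of $f_{0,0}$ in $\mathop{Ker}(\L)$ satisfying the pointwise equation $-\Delta f_{0,0}=\ninf f_{0,0}$, and uniqueness up to scalar multiplication. Existence is immediate: $\L f_{0,0}=0$ is Lemma~\ref{Lem:Eigenfunctions}, and differentiating the stationary relations $-\Delta\cinf=\ninf$ and $\ninf=(M/Z)\,e^{\cinf-|x|^2/2}$ with respect to $M$ yields both $-\Delta(\cinf g_{0,0})=\ninf f_{0,0}$ and $f_{0,0}=\cinf g_{0,0}+\mu_0'(M)$. Since $\mu_0'(M)$ is a spatial constant, one gets $-\Delta f_{0,0}=\ninf f_{0,0}$.

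For uniqueness, I would let $f\in L^2(\R^2,\ninf\,dx)$ solve $\L f=0$. By Cauchy–Schwarz, $\ninf f\in L^1(\R^2)$, so $\cinf g:=G_2*(\ninf f)$ is well defined and grows at most logarithmically at infinity. The first step is to multiply $\L f=0$ by $\ninf(f-\cinf g)$ and integrate by parts; the super-exponential decay of $\ninf$ absorbs the logarithmic growth so that boundary terms vanish, giving
\[
\int_{\R^2}\ninf\,|\nabla(f-\cinf g)|^2\,dx=0.
\]
This forces $f-\cinf g=C$ for some constant $C$, whence $-\Delta f=-\Delta(\cinf g)=\ninf f$ pointwise.

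Using $\int\ninf f_{0,0}\,dx=\int\partial_M\ninf\,dx=1$, I would then set $\alpha=\int\ninf f\,dx$ and replace $f$ by $f-\alpha f_{0,0}$ to reduce to the task of showing that any solution with $\int\ninf f\,dx=0$ must vanish. Exploiting the rotational invariance (since $\ninf$ is radial), decompose $f(r,\theta)=\sum_k f_k(r)\,e^{ik\theta}$. The constraint $f-\cinf g=C$ decouples across angular modes: only $k=0$ can carry a nonzero constant, so for $k\ge 1$ one has $f_k(r)\,e^{ik\theta}=G_2*(\ninf f_k e^{ik\theta})$, which forces $f_k$ to decay at infinity like $r^{-k}$. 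For $k=0$, the radial ODE $-(rf_0')'/r=\ninf f_0$ admits a one-dimensional family of regular-at-origin solutions spanned by $f_{0,0}$, and the zero-mass normalization then forces the coefficient to vanish.

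The principal obstacle is the $k\ge 1$ case: one must show that the (unique up to scalar) regular-at-origin solution of $-f_k''-f_k'/r+k^2f_k/r^2=\ninf f_k$ cannot decay at infinity, so that its only intersection with Newtonian potentials of its own $k$-th mode is trivial. For $k=1$ this is immediate because the regular-at-origin solution is a multiple of $\cinf'(r)-r$, the radial part of the eigenfunctions $f_{1,i}$ (eigenvalue $-1$ of $\L$ by Lemma~\ref{Lem:Eigenfunctions}), which behaves like $-r$ at infinity. For general $k\ge 1$, I would perform a Sturm–Liouville / asymptotic analysis of the radial ODE, matching the regular-at-origin behaviour against the $r^{\pm k}$ fundamental solutions of the free operator $-u''-u'/r+k^2u/r^2=0$ at infinity, to rule out the coincidental matching of the two asymptotic regimes.
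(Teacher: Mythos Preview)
Your reduction to the Schr\"odinger problem $\mathcal H f=0$ with $\mathcal H=-\Delta-\ninf$, via the energy identity $\int\ninf|\nabla(f-\cinf g)|^2=0$, is exactly what the paper does. The divergence is in how you then argue uniqueness.

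The paper does \emph{not} decompose in spherical harmonics. Instead it observes that the free energy $F$ is minimized at $\ninf$, so its second variation is nonnegative, which translates into $\mathcal H\ge 0$ as a quadratic form. Hence $0$ is the bottom of the spectrum of $\mathcal H$, and any null vector is a ground state. Since the potential $-\ninf$ is radial, Schwarz symmetrization forces the ground state to be radial; then Cauchy--Lipschitz for the radial ODE $f''+\tfrac1r f'+\ninf f=0$ with $f(0)=1$, $f'(0)=0$ gives simplicity. This bypasses all higher angular modes in one stroke.

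Your route is also viable, and your treatment of $k=0$ and $k=1$ is fine (the identification of $\cinf'-r$ as the regular $k=1$ solution, growing linearly, is exactly right). The gap is at $k\ge 2$: ``asymptotic matching'' cannot by itself exclude the coincidence that the regular-at-origin solution happens to pick out the $r^{-k}$ branch at infinity --- that coincidence is precisely what it means for $0$ to be an eigenvalue, and asymptotics of the free operator carry no obstruction to it. You need an additional input. Two options: either import the paper's variational bound $\mathcal H\ge 0$ (which immediately kills sign-changing null vectors like $f_k(r)e^{ik\theta}$), or run a genuine Sturm comparison against your explicit $k=1$ solution. For the latter, set $W=r(f_1 f_k'-f_1' f_k)$ with $f_1=r-\cinf'>0$; the equations give $W'=\tfrac{k^2-1}{r}\,f_1 f_k$, so $W$ is strictly increasing from $W(0^+)=0$ as long as $f_k>0$, yet at the first zero $r_1$ of $f_k$ one has $W(r_1)=r_1 f_1(r_1)f_k'(r_1)<0$, a contradiction; and if $f_k>0$ throughout, $W$ is increasing from $0$ but must return to $0$ at infinity (since $f_k\sim r^{-k}$, $f_1\sim r$), again impossible. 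Either route closes the argument; as written, your sketch does not.
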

%---------------------------------------------------------------------
\begin{proof} We have already seen that $f_{0,0}\in\mathop{Ker}(\L)$. It remains to prove that $\mathop{Ker}(\L)$ is one-dimensional. Let $f$ be such that $\L\,f=0$ and $g=\cinf^{-1}\,G_2*(f\,\ninf)$. An elementary computation shows that
\[
0=\ix{\L\,f\,(f-g\,\cinf)\,\ninf}=\ix{|\nabla(f-g\,\cinf)|^2\,\ninf}\;,
\]
thus proving that $f=g\,\cinf+\mu_0'$ for some real constant $\mu_0'$ (depending eventually on $M$, with the same notations as above). Hence any solution of $\L\,f=0$ has to solve
\[
\mathcal H\,f=0
\]
where $\mathcal H:=-\Delta-\ninf$ is a Schr\"odinger operator with potential $\ninf$, at least if one assumes that $\nabla(f-G_2*(f\,\ninf))$ belongs to $L^2(\ninf\,dx)$. As we shall see later in the discussion of the domain of definition of $\L$, this is indeed a natural assumption. Altogether, we are interested in characterizing the ground state of the Schr\"odinger operator $\mathcal H$ (with energy level $0$) and prove that it is uniquely determined, up to a multiplication by a constant. It is clear that $\mathcal H$ has no negative eigenvalue, otherwise the \emph{free energy} functional
\[
F[n]:=\ix{n\,\log\(\frac n\ninf\)}\\+\frac 1{4\,\pi}\iint_{\R^2\times\R^2}(n(x)-\ninf(x))\,\log|x-y|\,(n(y)-\ninf(y))\;dx\,dy
\]
would not achieve its minimum for $n=\ninf$ (see \cite{MR2226917} for a proof). 

Since $\ninf$ is radially symmetric (see for instance \cite{MR2226917} for a summary of known results), Schwarz' symmetrization applied to $\mathcal H$ shows that the ground state is radially symmetric. The function $\ninf$ seen as a potential, is smooth. By standard elliptic theory, the ground state is smooth as well. Hence, if $f\in H^1(\R^2)$ solves $\mathcal H\,f=0$, it is uniquely determined as a solution of an ordinary differential equation by the Cauchy-Lipschitz theorem, up to a standard analysis at the origin. Indeed, by considering abusively $\ninf$ and $f$ as functions of $r=|x|$, we find that $f$ is given by
\begin{eqnarray*}
&&f''+\tfrac 1r\,f'+\ninf\,f=0\\
&&f(0)=1\;,\quad f'(0)=0
\end{eqnarray*}
(up to a multiplication by an arbitrary constant). This concludes the proof.
\end{proof}

%%%%%%%%%%%%%%%%%%%%%%%%%%%%%%%%%%%%%%%%%%%%%%%%%%%%%%%%%%%%%%%%%%%%%%
\subsection{Functional setting and operators}\label{Sec:FunctSetting}

In order to go further in the spectral analysis, to define correctly the domain of the operator $\L$, to justify the assumption that $\nabla(f-G_2*(f\,\ninf))$ belongs to $L^2(\ninf\,dx)$ and to establish spectral gap estimates which are crucial for our analysis, some considerations on the functional setting are in order. On $L^2(\R^2,\ninf\,dx)$, let us consider the quadratic form $\mathsf Q_1$ obtained by linearization around $\ninf$, that is
\[
\mathsf Q_1[f]:=\lim_{\varepsilon\to0}\frac 1{\varepsilon ^2}\,F[\ninf(1+\varepsilon\,f)]\;.
\]
It takes the form
\[
\mathsf Q_1[f]=\ix{|f|^2\,\ninf}+\frac 1{2\pi}\iint_{\R^2\times\R^2}f(x)\,\ninf(x)\,\log|x-y|\,f(y)\,\ninf(y)\;dx\,dy\;.
\]
As a consequence of its definition, $\mathsf Q_1$ is nonnegative.
%---------------------------------------------------------------------
\begin{Lemma}\label{Lem:LimHLS} Assume that $M\in(0,8\,\pi)$ and consider $\ninf$ defined by \eqref{StatKS}. Then
\be{Ineq:LimHLS}
-\frac 1{2\pi}\iint_{\R^2\times\R^2}f(x)\,\ninf(x)\,\log|x-y|\,f(y)\,\ninf(y)\;dx\,dy=\ix{f\,\ninf\,g\,\cinf}\le\ix{|f|^2\,\ninf}
\ee
for any $f\in L^2(\R^2,\ninf\,dx)$, where $g\,\cinf=G_2*(f\,\ninf)$. Moreover, if
\be{Cdt:Average}
\ix{f\,f_{0,0}\,\ninf}=0\;,
\ee
then equality holds in the above inequality if and only if $f=0$.
\end{Lemma}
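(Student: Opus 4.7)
The identity in \eqref{Ineq:LimHLS} follows at once from $g\,\cinf = G_2*(f\,\ninf)$ together with the definition $G_2(x) = -\frac{1}{2\pi}\log|x|$, so the substance of the lemma is the inequality $\ix{f\,\ninf\,g\,\cinf} \le \ix{|f|^2\,\ninf}$ and the associated rigidity statement.

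My plan is to set $h := g\,\cinf = G_2*(f\,\ninf)$, so that $-\Delta h = f\,\ninf$ on $\R^2$. An integration by parts then yields $\ix{f\,\ninf\,h} = \ix{|\nabla h|^2}$. I would combine this with two further estimates. First, Cauchy-Schwarz against the weight $\ninf\,dx$ gives
\[
\ix{f\,\ninf\,h} \le \left(\ix{f^2\,\ninf}\right)^{1/2}\left(\ix{h^2\,\ninf}\right)^{1/2}.
\]
Second, I would invoke the fact, already established in (the proof of) Proposition \ref{Prop:KerL}, that the Schr\"odinger operator $\mathcal H := -\Delta - \ninf$ is nonnegative on $L^2(\R^2)$ and admits $f_{0,0}$ as its unique ground state at energy zero. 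Its nonnegativity is exactly the Hardy-Poincar\'e type bound $\ix{h^2\,\ninf} \le \ix{|\nabla h|^2}$. Plugging this back into Cauchy-Schwarz produces
\[
\ix{|\nabla h|^2} \le \left(\ix{f^2\,\ninf}\right)^{1/2}\left(\ix{|\nabla h|^2}\right)^{1/2},
\]
from which $\ix{|\nabla h|^2} \le \ix{f^2\,\ninf}$, i.e.~the desired inequality.

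For the equality case under \eqref{Cdt:Average}, equality must hold simultaneously in both intermediate steps. Equality in the Hardy-Poincar\'e step forces $h$ to lie in $\ker\mathcal H$, which is one-dimensional by Proposition \ref{Prop:KerL}, so $h = c\,f_{0,0}$ for some scalar $c$. Applying $-\Delta$ and using $-\Delta f_{0,0} = f_{0,0}\,\ninf$ from the same proposition gives $f\,\ninf = c\,f_{0,0}\,\ninf$, hence $f = c\,f_{0,0}$ on $\R^2$ (where $\ninf>0$). The orthogonality hypothesis \eqref{Cdt:Average} then reads $c\,\ix{f_{0,0}^2\,\ninf} = 0$, and since $f_{0,0}$ is nontrivial and belongs to $L^2(\ninf\,dx)$ by the Gaussian-type decay of $\ninf$, we conclude $c = 0$ and $f \equiv 0$.

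The main technical point to handle is justifying the integration by parts $\ix{f\,\ninf\,h} = \ix{|\nabla h|^2}$ and confirming finiteness of all integrals involved, for arbitrary $f \in L^2(\R^2,\ninf\,dx)$. Since $\ninf$ is smooth, bounded and has Gaussian-type decay, the source $f\,\ninf$ lies in $L^1\cap L^2(\R^2)$, so standard Newtonian potential estimates provide enough regularity and decay for $h$ and $\nabla h$; an approximation argument on bounded balls handles any remaining boundary terms. Once these \emph{a priori} bounds are in place, the argument reduces to assembling the spectral information already contained in Proposition \ref{Prop:KerL}.
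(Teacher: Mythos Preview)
Your route is genuinely different from the paper's. The paper obtains $\mathsf Q_1[f]\ge 0$ by recognizing it as the second variation of the free energy $F$ at its minimizer $\ninf$ (the nonnegativity of $F$ being the subcritical logarithmic Hardy--Littlewood--Sobolev inequality), and then reads off the equality case from the Euler--Lagrange equation $-\Delta f=f\,\ninf$. You instead combine Cauchy--Schwarz with the spectral bound $\mathcal H=-\Delta-\ninf\ge 0$ taken from Proposition~\ref{Prop:KerL}. There is no circularity, and your treatment of the equality case is essentially the paper's.

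There is, however, a real gap in the inequality step. Your argument hinges on the identity $\ix{f\,\ninf\,h}=\ix{|\nabla h|^2}$ and on the Hardy--Poincar\'e bound $\ix{h^2\,\ninf}\le\ix{|\nabla h|^2}$, both of which require $\nabla h\in L^2(\R^2)$. In dimension two this forces $\ix{f\,\ninf}=0$: if $m:=\ix{f\,\ninf}\neq 0$ then $\nabla h(x)\sim -\tfrac{m}{2\pi}\,x/|x|^2$ at infinity, so $|\nabla h|^2\sim c/|x|^2$ and $\ix{|\nabla h|^2}=+\infty$; neither the integration by parts nor the form-domain inequality for $\mathcal H$ is then available. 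The paper flags exactly this point immediately after the lemma, observing that the rewriting $\ix{f\,\ninf\,g\,\cinf}=\ix{|\nabla(g\,\cinf)|^2}$ is valid only under~\eqref{Cdt:OrthogConst}. Your appeal to ``standard Newtonian potential estimates'' does not help here: the obstruction is the decay of $\nabla h$, not its local regularity.

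Since the lemma is asserted for \emph{all} $f\in L^2(\ninf\,dx)$ --- and the kernel direction $f_{0,0}$, for which equality holds, satisfies $\ix{f_{0,0}\,\ninf}=\partial_M M=1\neq 0$ --- the zero-mean case does not cover the full statement. You would need either a separate treatment of the component along constants, or to revert to the paper's variational argument, which works directly with the finite quantity $\mathsf Q_1[f]$ and never passes through $\ix{|\nabla h|^2}$.
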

%---------------------------------------------------------------------
Notice that, if $f\in L^2(\R^2,\ninf\,dx)$ is such that
\be{Cdt:OrthogConst}
\ix{f\,\ninf}=0\;,
\ee
then \eqref{Ineq:LimHLS} can be written as
\be{Ineq:LimHLS2}
\ix{|\nabla(g\,\cinf)|^2}\le\ix{|f|^2\,\ninf}\;.
\ee
It is indeed well known that $\nabla(g\,\cinf)$ is in $L^2(\R^2,dx)$ as a solution of $-\Delta(g\,\cinf)=f\,\ninf$ only if \eqref{Cdt:OrthogConst}~holds. Lemma~\ref{Lem:LimHLS} will be improved in Section~\ref{Sec:Strict} (see Corollary~\ref{Cor:LimHLS}); the proof of such a result is independent of the remainder of this section.

\begin{proof}[Proof of Lemma~\ref{Lem:LimHLS}] To prove \eqref{Ineq:LimHLS}, we recall that the free energy $n\mapsto F[n]$ achieves its minimum for $n=\ninf$ according to the logarithmic Hardy-Littlewood-Sobolev inequality (see \cite{1101} for detailed considerations on this formulation of the inequality), and observe that $\mathsf Q_1[f]\ge 0$ for any smooth function $f$ with compact support satisfying \eqref{Cdt:Average}. The inequality then holds for any $f\in L^2(\R^2,\ninf\,dx)$ by density of smooth functions with compact support in $L^2(\R^2,\ninf\,dx)$.

If equality holds in \eqref{Ineq:LimHLS}, then the Euler-Lagrange equation amounts to $-\Delta f=f\,\ninf$, which characterizes the kernel $\mathop{Ker}(\L)$ according to Proposition~\ref{Prop:KerL}.\end{proof}

By \eqref{Ineq:LimHLS}, $\mathsf Q_1[f]$ is nonnegative, and positive semi-definite on the orthogonal of the kernel of $\L$, for the natural scalar product on $L^2(\ninf\,dx)$, \emph{i.e.}~for any $f\in L^2(\ninf\,dx)$ such that~\eqref{Cdt:Average} holds. Using previous notations, we may write
\[
\mathsf Q_1[f]=\ix{f\,(f-g\,\cinf)\,\ninf}\quad\mbox{with}\quad g\,\cinf=G_2*(f\,\ninf)\;.
\]
If \eqref{Cdt:OrthogConst} holds, we can also observe that
\[
\mathsf Q_1[f]=\ix{|f|^2\,\ninf}-\ix{|\nabla(g\,\cinf)|^2}\;.
\]

To $\mathsf Q_1$ we associate its polar form $\mathsf L_1$ defined on smooth functions with compact support such that \eqref{Cdt:Average} holds and define its Friedrich's extension on $L^2(\ninf\,dx)$, that we still denote by $\mathsf L_1$. By construction, $\mathsf L_1$ is a positive self-adjoint operator with domain $\mathcal D(\mathsf L_1)\subset L^2(\ninf\,dx)$. On $\mathcal D(\mathsf L_1)$, we shall denote by $\langle\cdot,\cdot\rangle$ the scalar product induced by $\mathsf L_1$. Explicitly, this means that
\[
\langle f,\tilde f\rangle=\ix{f\,\tilde f\,\ninf}+\frac 1{2\pi}\iint_{\R^2\times\R^2}f(x)\,\ninf(x)\,\log|x-y|\,\tilde f(y)\,\ninf(y)\;dx\,dy\;.
\]
The scalar product $\langle\cdot,\cdot\rangle$ induced by $\mathsf L_1$ is defined on the orthogonal of $f_{0,0}$, but can be extended as a bilinear form to $L^2(\ninf\,dx)$. If $f\in L^2(\ninf\,dx)$ is such that \eqref{Cdt:OrthogConst} holds, then we notice that
\[
\langle f,f_{0,0}\rangle=\ix{f\,(f_{0,0}-G_2*f_{0,0})\,\ninf}=0
\]
because $f_{0,0}=G_2*(f_{0,0}\,\ninf)+\mu_0'$. With these notations, notice that we have
\[
\langle f,f\rangle=\mathsf Q_1[f]\ge0
\]
for any $f\in\mathcal D(\mathsf L_1)$, with equality if and only if $f=0$.

\medskip We can also define the quadratic form $\mathsf Q_2$ as
\[
\mathsf Q_2[f]:=\ix{|\nabla(f-g\,\cinf)|^2\,\ninf}\quad\mbox{with}\quad g=\frac 1{\cinf}\,G_2*(f\,\ninf)\;.
\]
As for $\mathsf Q_1$, we define $\mathsf Q_2$ on the set of smooth functions such that \eqref{Cdt:Average} holds and extend it. The associated self-adjoint nonnegative operator is denoted by $\mathsf L_2$ and it is again a self-adjoint operator, with domain $\mathcal D(\mathsf L_2)\subset L^2(\ninf\,dx)$.
%---------------------------------------------------------------------
\begin{Prop}\label{Prop:LisSelfAdjoint} With the above notations, the restriction of $\L$ to $\mathcal D(\mathsf L_1)$ is a self-adjoint operator for the scalar product $\langle\cdot,\cdot\rangle$ with domain $\mathcal D(\mathsf L_2)$, such that
\[
\langle f,\L\,f\rangle=-\,\mathsf Q_2[f]\quad\forall\;f\in\mathcal D(\mathsf L_2)
\]
and $\mathrm{Ker}(\L)\cap\mathcal D(\mathsf L_2)=\{0\}$.
\end{Prop}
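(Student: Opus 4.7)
The plan is to verify the identity $\langle f,\mathcal{L}f\rangle=-\mathsf{Q}_2[f]$ by integration by parts on a core of smooth functions, to deduce the symmetry of $-\mathcal{L}$ with respect to $\langle\cdot,\cdot\rangle$, and then to invoke Friedrich's extension theorem to recognize the self-adjoint operator whose associated form is $\mathsf{Q}_2$ and whose operator domain is $\mathcal{D}(\mathsf{L}_2)$.

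First I would fix a core $\mathcal{C}$ of smooth compactly supported functions satisfying~\eqref{Cdt:Average}, dense in both form domains. For $f,h\in\mathcal{C}$, set $g_f\,\cinf=G_2\ast(f\,\ninf)$ and $g_h\,\cinf=G_2\ast(h\,\ninf)$. Using the identity $f_{0,0}=g_{0,0}\,\cinf+\mu_0'(M)$ and the symmetry of the kernel $G_2$, the polarization of $\mathsf{Q}_1$ can be rewritten as
\[
\langle f,h\rangle=\ix{h\,(f-g_f\,\cinf)\,\ninf}=\ix{f\,(h-g_h\,\cinf)\,\ninf}.
\]
Substituting $h=\mathcal{L}f=\ninf^{-1}\,\nabla\cdot[\ninf\,\nabla(f-g_f\,\cinf)]$ and integrating by parts against the admissible test function $f-g_f\,\cinf$ yields
\[
\langle f,\mathcal{L}f\rangle=-\ix{\ninf\,|\nabla(f-g_f\,\cinf)|^2}=-\mathsf{Q}_2[f],
\]
while the same computation with two functions gives
\[
\langle f,\mathcal{L}h\rangle=-\ix{\ninf\,\nabla(f-g_f\,\cinf)\cdot\nabla(h-g_h\,\cinf)},
\]
which is manifestly symmetric in $(f,h)$. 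Hence $-\mathcal{L}$ is a symmetric nonnegative operator on $\mathcal{C}$ for the scalar product $\langle\cdot,\cdot\rangle$, whose associated quadratic form is exactly $\mathsf{Q}_2$.

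Second, I would apply Friedrich's extension theorem in the Hilbert space obtained by completing $\mathcal{C}$ under $\langle\cdot,\cdot\rangle$: the closure of the quadratic form $f\mapsto\langle f,(-\mathcal{L})f\rangle=\mathsf{Q}_2[f]$ coincides with the form that defines $\mathsf{L}_2$, so the unique self-adjoint extension of $-\mathcal{L}\vert_{\mathcal{C}}$ is $\mathsf{L}_2$ with operator domain $\mathcal{D}(\mathsf{L}_2)$, and the identity $\langle f,\mathcal{L}f\rangle=-\mathsf{Q}_2[f]$ extends to every $f\in\mathcal{D}(\mathsf{L}_2)$ by continuity.

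Finally, for the triviality of the kernel: if $f\in\mathcal{D}(\mathsf{L}_2)\subset\mathcal{D}(\mathsf{L}_1)$ satisfies $\mathcal{L}f=0$, then $\mathsf{Q}_2[f]=-\langle f,\mathcal{L}f\rangle=0$, so $\nabla(f-g_f\,\cinf)\equiv 0$ in $L^2(\ninf\,dx)$, meaning $f-g_f\,\cinf$ is a constant. Applying $-\Delta$ gives $-\Delta f=f\,\ninf$, so Proposition~\ref{Prop:KerL} forces $f=c\,f_{0,0}$ for some $c\in\R$. The orthogonality~\eqref{Cdt:Average} encoded in $f\in\mathcal{D}(\mathsf{L}_1)$ then gives $c\,\ix{|f_{0,0}|^2\,\ninf}=0$, hence $c=0$. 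The main obstacle I anticipate is the identification of the operator domain of the Friedrich extension with $\mathcal{D}(\mathsf{L}_2)$: it requires density of $\mathcal{C}$ in both form domains and a careful control at infinity of the nonlocal potential $g_f\,\cinf$, to be handled through the uniform bounds of Theorem~\ref{Thm:UniformBound} and the inequality of Lemma~\ref{Lem:LimHLS}.
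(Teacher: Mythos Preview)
The paper states Proposition~\ref{Prop:LisSelfAdjoint} without giving a proof; it is presented as a direct consequence of the construction of the quadratic forms $\mathsf Q_1$ and $\mathsf Q_2$ and of Proposition~\ref{Prop:KerL}, so there is no argument in the paper against which to compare yours in detail.

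Your proof fills in exactly what the paper leaves implicit, and does so along the lines the authors clearly have in mind. The computation $\langle f,\mathcal L h\rangle=\ix{(\mathcal L h)(f-g_f\,\cinf)\,\ninf}=-\ix{\ninf\,\nabla(f-g_f\,\cinf)\cdot\nabla(h-g_h\,\cinf)}$ is the right identity (it relies only on the symmetry of $G_2$ and on the Gaussian decay of $\ninf$, which makes the boundary terms vanish even though $g_f\,\cinf$ may grow logarithmically), and the kernel argument via Proposition~\ref{Prop:KerL} plus \eqref{Cdt:Average} is precisely what the paper's setup is designed for. The one genuine subtlety you flag---identifying the operator domain of the Friedrich extension in the $\langle\cdot,\cdot\rangle$-Hilbert space with the space the paper calls $\mathcal D(\mathsf L_2)$, which was defined as a Friedrich extension in $L^2(\ninf\,dx)$---is real: these are two a priori different constructions in two different Hilbert spaces. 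The paper glosses over this, effectively \emph{defining} the domain of $\mathcal L$ to be $\mathcal D(\mathsf L_2)$; your proof is honest in isolating this as the point requiring justification, and the equivalence of the $\mathsf Q_1$-norm with the $L^2(\ninf\,dx)$-norm on the orthogonal of $f_{0,0}$ (later established as Corollary~\ref{Cor:LimHLS}) is what ultimately makes the two form closures coincide.
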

%---------------------------------------------------------------------

\begin{Rem} The function $f_{0,0}$ is an eigenfunction of $\L$ but this is not the case of $f\equiv1$. With the notations of Section~\ref{Sec:SomeEigenfunctions}, the functions $f_{0,1}$ and $f_{1,i}$ are orthogonal to $f\equiv1$ in $L^2(\R^2,\ninf\,dx)$ for $i=1$,~$2$, but this is not the case neither for $f_{0,0}$.
\end{Rem}

%%%%%%%%%%%%%%%%%%%%%%%%%%%%%%%%%%%%%%%%%%%%%%%%%%%%%%%%%%%%%%%%%%%%%%
\subsection{The spectrum of {\texorpdfstring{$\mathcal L$}{L}} is discrete}\label{Sec:Discrete}

We define
\[
\Lambda_1:=\inf_{f\in\mathcal D(\mathsf L_2)\setminus\{0\}}\frac{\mathsf Q_2[f]}{\mathsf Q_1[f]}\quad\mbox{and}\quad\Lambda_\infty:=\lim_{R\to\infty}\inf_{\begin{array}{c}f\in\mathcal D(\mathsf L_2)\setminus\{0\}\\[2pt] \mathrm{supp}(f)\subset\R^2\setminus B(0,R) \end{array}}\frac{\mathsf Q_2[f]}{\mathsf Q_1[f]}\;.
\]
First, let us give a heuristic approach of the problem. As an application of Persson's method (see~\cite{MR0133586}), the bottom of the essential spectrum of $\L$ can be characterized as
\[
\inf\sigma_\mathrm{ess}(\L)=\Lambda_\infty\;.
\]
To prove that $\mathcal L$ has a spectral gap on $\mathcal D(\mathsf L_2)$, it is enough to show that $\Lambda_\infty$ is positive: either $\Lambda_1=\Lambda_\infty$, or $\Lambda_1<\Lambda_\infty$ is a nonnegative eigenvalue, which cannot be equal to $0$. This is summarized in the following statement.
%---------------------------------------------------------------------
\begin{Prop}\label{Prop:Ineq} With the above notations, $\Lambda_1$ is positive and 
\be{Gap}
\Lambda_1\,\mathsf Q_1[f]\le\mathsf Q_2[f]\quad\forall\;f\in\mathcal D(\mathsf L_2)\;.
\ee
\end{Prop}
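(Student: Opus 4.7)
The plan is to exploit the three facts already gathered: (i) $\mathcal L$ is self-adjoint with respect to $\langle\cdot,\cdot\rangle$ on $\mathcal D(\mathsf L_2)$ with associated Dirichlet form $\mathsf Q_2$ and $\langle f,f\rangle=\mathsf Q_1[f]$, (ii) $\mathrm{Ker}(\mathcal L)\cap\mathcal D(\mathsf L_2)=\{0\}$ by Proposition~\ref{Prop:LisSelfAdjoint}, and (iii) Persson's method identifies $\Lambda_\infty$ with the bottom of the essential spectrum of $-\mathcal L$. The strategy is therefore to prove $\Lambda_\infty>0$, which by the Persson identification combined with (ii) forces the variational infimum $\Lambda_1$ to be strictly positive, and hence \eqref{Gap}.

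First I would justify Persson's characterization in the present non-local setting. Since $\mathsf Q_2[f]-\int|\nabla f|^2\,\ninf\,dx$ is a quadratic form built from $g\,\cinf=(-\Delta)^{-1}(f\,\ninf)$, it is a relatively compact perturbation of $\int|\nabla f|^2\,\ninf\,dx$ on $L^2(\ninf\,dx)$ once one uses the Gaussian-type decay of $\ninf$ and the smoothing properties of the Newtonian potential. The classical argument of Persson \cite{MR0133586} then applies essentially verbatim and gives $\inf\sigma_{\mathrm{ess}}(-\mathcal L)=\Lambda_\infty$.

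Next I would prove $\Lambda_\infty>0$. Because $\ninf(x)$ is comparable to $|x|^{-M/(2\pi)}e^{-|x|^2/2}$ at infinity, the measure $\ninf\,dx$ admits a Poincar\'e inequality: there exists $\lambda_P>0$ with
\[
\ix{|\nabla f|^2\,\ninf}\;\ge\;\lambda_P\,\ix{|f-\bar f|^2\,\ninf}\,,\qquad\bar f:=\frac 1M\ix{f\,\ninf}.
\]
For $f\in\mathcal D(\mathsf L_2)$ supported in $\R^2\setminus B(0,R)$, the mass $\int f\,\ninf$ is controlled by $\eps_R\,\|f\|_{L^2(\ninf dx)}$ with $\eps_R:=\sqrt{\int_{|x|>R}\ninf\,dx}\to0$, so that $\mathsf Q_1[f]=(1-o(1))\ix{|f|^2\,\ninf}$ via Lemma~\ref{Lem:LimHLS}. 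Similarly, standard $L^p$ estimates for the Riesz kernel applied to $f\,\ninf$, whose total mass tends to $0$ as $R\to\infty$, force $\nabla(g\,\cinf)$ to be small on the bulk of the weight $\ninf$, yielding $\mathsf Q_2[f]=(1-o(1))\ix{|\nabla f|^2\,\ninf}$. Combining these two expansions with the Poincar\'e inequality gives $\Lambda_\infty\ge\lambda_P>0$.

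Finally I would conclude by a standard min-max dichotomy. Since $\Lambda_\infty>0$, either $\Lambda_1=\Lambda_\infty>0$ and \eqref{Gap} holds, or $\Lambda_1<\Lambda_\infty$, in which case $\Lambda_1$ is an eigenvalue attained by some $f_1\in\mathcal D(\mathsf L_2)\setminus\{0\}$. Should $\Lambda_1=0$, the identity $\mathsf Q_2[f_1]=0$ forces $f_1-g_1\,\cinf$ to be constant, whence $\mathcal L f_1=0$ and $f_1\in\mathrm{Ker}(\mathcal L)\cap\mathcal D(\mathsf L_2)=\{0\}$ by Proposition~\ref{Prop:LisSelfAdjoint}, a contradiction. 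Hence $\Lambda_1>0$, which is exactly \eqref{Gap}.

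The main obstacle I expect is the rigorous implementation of Persson's localization for the non-local operator $\mathcal L$: one must verify that truncation of the test function produces an error whose contribution to $\mathsf Q_2$ through the Newtonian potential is negligible, and that the Poisson part is suitably compact in the weighted Hilbert space carrying $\langle\cdot,\cdot\rangle$. Once this is settled, the quantitative estimates in Step~2 are of Gaussian-tail and Riesz-potential type and can be made explicit without difficulty.
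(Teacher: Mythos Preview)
Your plan follows precisely the heuristic that the paper sketches just before the proposition, but the paper then explicitly \emph{declines} to make it rigorous, writing that ``Persson's method is not well designed either to handle convolution operators''. Instead of invoking Persson and a spectral dichotomy, the paper gives a direct concentration-compactness argument split into two lemmas: Lemma~\ref{Lem:EssSpectrum} proves the stronger fact $\Lambda_\infty=\infty$ (not merely $\Lambda_\infty>0$), and Lemma~\ref{Lem:DiscreteSpectrum} shows that a minimizing sequence for $\mathsf Q_2/\mathsf Q_1$ converges to an actual minimizer $f\in\mathcal D(\mathsf L_2)\setminus\{0\}$, whence $\Lambda_1>0$ since $\mathsf Q_1$ is positive definite there. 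No identification of $\Lambda_\infty$ with the bottom of an essential spectrum is ever used.

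Your Step~1 is therefore the real gap, and you correctly flag it at the end: asserting that the Poisson part is a ``relatively compact perturbation'' on the weighted space carrying $\langle\cdot,\cdot\rangle$ is exactly the point the paper does not verify and sidesteps. Your Step~2 is also thinner than it looks. To get $\mathsf Q_2[f]=(1-o(1))\int|\nabla f|^2\,\ninf\,dx$ for $f$ supported outside $B(0,R)$ you must control the cross term $\int\nabla f\cdot\nabla(g\,\cinf)\,\ninf\,dx$ relative to the Dirichlet energy; the paper does this in the proof of Lemma~\ref{Lem:EssSpectrum} by integrating by parts and using the \emph{localized} Poincar\'e estimate $\int|f|^2\,\ninf\,dx\le\varepsilon(R)\int|\nabla f|^2\,\ninf\,dx$ with $\varepsilon(R)\to0$, which is strictly stronger than the global constant $\lambda_P$ you invoke. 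With only $\lambda_P$ it is not clear that the non-local terms are $o(1)$ times the Dirichlet form rather than merely $O(1)$ times the $L^2(\ninf\,dx)$ norm. If you want to salvage your route, you should upgrade Step~2 to $\Lambda_\infty=\infty$ (which the paper's computation gives almost for free once you use $\varepsilon(R)\to0$), and then the dichotomy of Step~3 can be replaced by a direct compactness argument for minimizing sequences, which is what the paper actually does.
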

%---------------------------------------------------------------------
For any $f\in\mathcal D(\mathsf L_2)$, if \eqref{Cdt:OrthogConst} holds, then Inequality~\eqref{Gap} can be reformulated as
\[
\Lambda_1\ix{|f|^2\,\ninf}\le\ix{|\nabla(f-g\,\cinf)|^2\,\ninf}+\Lambda_1\ix{|\nabla(g\,\cinf)|^2}\;.
\]

\medskip The proof of Proposition~\ref{Prop:Ineq} can be done by considering $\mathcal L$ as a perturbation of the operator $f\mapsto\ninf^{-1}\,\nabla\cdot(\ninf\,\nabla f)$ defined on $L^2(\R^2,\ninf\,dx)$. This was the method of \cite{Blanchet2010533}. However on such a space $\mathcal L$ is not self-adjoint and justifications are delicate because of the logarithmic kernel, away from the small mass regime.

In practice, Persson's method is not well designed either to handle convolution operators, although it can probably be adapted with little effort. This may even have been done, but we are not aware of such a result. Moreover, as we shall see below, we have: $\Lambda_\infty=\infty$, which further simplifies the proof. For these reasons, we will therefore give a direct proof, based on some of the tools of the concentration-compactness method (see \cite{MR653747,MR705681,MR778974,MR834360,MR850686}) and adapted to the case of a bounded measure, $\ninf\,dx$, as in~\cite{blanchet:46}. In that framework, $\Lambda_\infty$ corresponds to the \emph{problem at infinity.} For simplicity, let us split the proof into Lemmas~\ref{Lem:EssSpectrum}~and~\ref{Lem:DiscreteSpectrum}.
%---------------------------------------------------------------------
\begin{Lemma}\label{Lem:EssSpectrum} With the above notations, $\Lambda_\infty=\infty$.\end{Lemma}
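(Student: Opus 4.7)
The plan is to establish the stronger assertion that, for $f$ supported in $\{|x|\geq R\}$, the Rayleigh quotient $\mathsf Q_2[f]/\mathsf Q_1[f]$ grows at least like $R^2$, so that $\Lambda_\infty=+\infty$.

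The main ingredient is a weighted Hardy--Poincar\'e inequality for the measure $\ninf\,dx$ on exterior domains. Since $\nabla\log\ninf=\nabla\cinf-x$, one has the pointwise identity $\nabla\cdot(x\,\ninf)=(2+x\cdot\nabla\cinf-|x|^2)\,\ninf$. Multiplying by $f^2$, integrating by parts on the left to produce $-2\int f\,(x\cdot\nabla f)\,\ninf\,dx$, and absorbing the cross term via Cauchy--Schwarz with $\varepsilon=\tfrac12$ yields
\[
\int |x|^2 f^2\,\ninf\,dx\;\leq\;C\int f^2\,\ninf\,dx+4\int|\nabla f|^2\,\ninf\,dx\,,
\]
where $C=2(2+\|x\cdot\nabla\cinf\|_\infty)$ is finite since $\cinf(x)\sim-\tfrac M{2\pi}\log|x|$ at infinity, hence $x\cdot\nabla\cinf$ is bounded globally. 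Restricting to $f$ supported in $\{|x|\geq R\}$ with $R^2>2C$, so that $R^2\int f^2\ninf\,dx$ on the left absorbs the first term on the right, gives $\int|\nabla f|^2\ninf\,dx\geq\tfrac{R^2}{8}\int f^2\ninf\,dx$.

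Next I split $\mathsf Q_2[f]=\int|\nabla u|^2\ninf\,dx$ using $u=f-g\cinf$: on $\{|x|<R\}$, $\nabla f=0$ so $\nabla u=-\nabla(g\cinf)$, while on $\{|x|\geq R\}$, $|\nabla u|^2\geq\tfrac12|\nabla f|^2-|\nabla(g\cinf)|^2$. Hence
\[
\mathsf Q_2[f]\;\geq\;\tfrac12\int|\nabla f|^2\,\ninf\,dx-\int_{|x|\geq R}|\nabla(g\cinf)|^2\,\ninf\,dx\,.
\]
The nonlocal remainder is then bounded by applying Lemma~\ref{Lem:Linfty} to $\rho=f\ninf$: $\|\nabla(g\cinf)\|_\infty\leq C(\|f\ninf\|_{2-\varepsilon}+\|f\ninf\|_{2+\varepsilon})$. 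Since $\ninf$ is bounded and decays faster than any inverse polynomial, H\"older's inequality in the spirit $\int|f|^p\ninf^p\,dx\leq(\sup_{|x|\geq R}\ninf)^{p-1}\int|f|^p\ninf\,dx$ combined with Lemma~\ref{Lem:LimHLS} and interpolation between $p=1$ and $p=2$ (and similarly above $p=2$, using Theorem~\ref{Thm:UniformBound} to control $f$) shows $\|f\ninf\|_{2\pm\varepsilon}\leq\eta(R)\,\|f\|_{L^2(\ninf\,dx)}$ with $\eta(R)\to 0$. Multiplying by $\int_{|x|\geq R}\ninf\,dx$ gives $\int_{|x|\geq R}|\nabla(g\cinf)|^2\ninf\,dx\leq \tilde\eta(R)\int f^2\ninf\,dx$. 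An analogous argument controls $\|g\cinf\|_{L^2(\ninf\,dx)}$ and, via $\mathsf Q_1[f]=\int f^2\ninf\,dx-\int f\,(g\cinf)\,\ninf\,dx$ and Cauchy--Schwarz, yields $\mathsf Q_1[f]\leq(1+o(1))\int f^2\ninf\,dx$ as $R\to\infty$.

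Combining the three estimates,
\[
\frac{\mathsf Q_2[f]}{\mathsf Q_1[f]}\;\geq\;\frac{\bigl(\tfrac{R^2}{16}-\tilde\eta(R)\bigr)\int f^2\ninf\,dx}{(1+o(1))\int f^2\ninf\,dx}\;\xrightarrow[R\to\infty]{}\;+\infty\,,
\]
which proves $\Lambda_\infty=+\infty$. The principal obstacle is the tension, in controlling the nonlocal term $g\cinf$, between the logarithmic singularity of $G_2$ near the diagonal---which demands an $L^p$ control of $f\ninf$ for some $p>2$---and the Gaussian-type decay of $\ninf$ at infinity---which is most naturally exploited in $L^p$ for $p<2$. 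Lemma~\ref{Lem:Linfty} packages precisely the two-sided $L^{2\pm\varepsilon}$ bound needed to bridge these regimes, and the uniform $L^\infty$ bound on $\ninf$ together with its super-polynomial decay ensures that both contributions vanish as $R\to\infty$.
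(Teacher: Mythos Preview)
Your weighted Hardy--Poincar\'e argument giving $\int|\nabla f|^2\,\ninf\,dx\ge\tfrac{R^2}{8}\int f^2\,\ninf\,dx$ for $f$ supported in $\{|x|\ge R\}$ is correct and is in fact more explicit than the paper's bare invocation of Persson's method for the same fact; the splitting of $\mathsf Q_2$ via $|\nabla u|^2\ge\tfrac12|\nabla f|^2-|\nabla(g\cinf)|^2$ is also fine.

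The genuine gap is in your control of the nonlocal term for $p>2$. You invoke Theorem~\ref{Thm:UniformBound} ``to control $f$'' and obtain $\|f\ninf\|_{2+\varepsilon}\le\eta(R)\,\|f\|_{L^2(\ninf\,dx)}$, but that theorem concerns solutions of the rescaled Keller--Segel evolution, not arbitrary test functions $f\in\mathcal D(\mathsf L_2)$. No such inequality can hold: take $f=\delta^{-1}$ on a ball of radius $\delta$ centred at some $x_0$ with $|x_0|>R$; then $\|f\|_{L^2(\ninf\,dx)}^2\approx\pi\,\ninf(x_0)$ stays bounded while $\|f\ninf\|_{2+\varepsilon}\sim\ninf(x_0)\,\delta^{-\varepsilon/(2+\varepsilon)}\to\infty$ as $\delta\to0$. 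Hence Lemma~\ref{Lem:Linfty} is not available here, and your bound on $\int_{|x|\ge R}|\nabla(g\cinf)|^2\,\ninf\,dx$ collapses.

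The paper avoids the $L^\infty$ route entirely. Under the zero-mean condition $\int f\,\ninf\,dx=0$ it uses the energy bound~\eqref{Ineq:LimHLS2}, namely $\int_{\R^2}|\nabla(g\cinf)|^2\,dx\le\int_{\R^2}f^2\,\ninf\,dx$, together with the pointwise decay of $\ninf$ and of $(\nabla\cinf-x)\sqrt\ninf$, to control the cross terms in the expansion of $|\nabla(f-g\cinf)|^2$. The same bound immediately gives
\[
\int_{|x|\ge R}|\nabla(g\cinf)|^2\,\ninf\,dx\;\le\;\sup_{|x|\ge R}\ninf\;\int_{\R^2}f^2\,\ninf\,dx\;,
\]
which is precisely the estimate your decomposition needs; replacing the appeal to Lemma~\ref{Lem:Linfty} by~\eqref{Ineq:LimHLS2} repairs your argument in this case. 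The upper bound on $\mathsf Q_1$ is then free, since $\int f\,(g\cinf)\,\ninf\,dx=\int|\nabla(g\cinf)|^2\,dx\ge0$ forces $\mathsf Q_1[f]\le\int f^2\,\ninf\,dx$ with no further work. The general case $\int f\,\ninf\,dx\ne0$ requires a separate (but elementary) reduction, writing $f=\tilde f+\theta$ with $\theta$ constant and $\int\tilde f\,\ninf\,dx=0$, which the paper carries out explicitly.
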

%---------------------------------------------------------------------
\begin{proof} Recall that
\[
\ninf=M\,\frac{e^{\cinf-\frac 12\,|x|^2}}{\ix{e^{\cinf-\frac 12\,|x|^2}}}
\]
where $\cinf=(-\Delta)^{-1}\,\ninf$ is such that
\[
\limsup_{|x|\to\infty}\big|\,\cinf(x)+\frac M{2\pi}\,\log|x|\,\big|<\infty\;.
\]
As a consequence, we know that
\[
\ninf(x)\sim |x|^{-\alpha}\,e^{-\frac 12\,|x|^2}\quad\mbox{as}\quad|x|\to+\infty\;,\quad\mbox{with}\quad\alpha=\frac M{2\pi}\;.
\]
We can expand the square $|\nabla(f-g\,\cinf)|^2$ and get
\begin{multline*}
\mathsf Q_2[f]=\ix{|\nabla(f-g\,\cinf)|^2\,\ninf}\\
=\ix{|\nabla f|^2\,\ninf}+\ix{|\nabla(g\,\cinf)|^2\,\ninf}\hspace*{2cm}\\
+2\ix{f\,\nabla(g\,\cinf)\cdot\nabla\ninf}-\,2\ix{f\,\big(-\Delta(g\,\cinf)\big)\,\ninf}\;.
\end{multline*}
Assume that $f$ is supported in $\R^2\setminus B(0,R)$, for $R>0$, large. Then 
\begin{multline*}
\ix{f\,\big(-\Delta(g\,\cinf)\big)\,\ninf}=\ix{|f|^2\,\ninf^2}\\
\le\sup_{|x|>R}\ninf(x)\ix{|f|^2\,\ninf}\sim R^{-\alpha}\,e^{-\frac 12\,R^2}\ix{|f|^2\,\ninf}
\end{multline*}
on the one hand, and we know from Persson's method that
\[
\lim_{R\to\infty}\inf_{\begin{array}{c}f\in\mathcal D(\mathsf L_2)\setminus\{0\}\\
\mathrm{supp}(f)\subset\R^2\setminus B(0,R)
\end{array}}\frac{\ix{|\nabla f|^2\,\ninf}}{\ix{|f|^2\,\ninf}}=+\infty
\]
on the other hand, so that, for any $\varepsilon>0$, there exists $R>0$ large enough for which
\[
\ix{|f|^2\,\ninf}\le\varepsilon\ix{|\nabla f|^2\,\ninf}
\]
for any function $f\in H^1(\R^2,\ninf\,dx)$. Equivalently, we can write that there exists a positive function $R\mapsto\varepsilon(R)$ such that $\lim_{R\to+\infty}\varepsilon(R)=0$ and
\[
0\le\ix{f\,\big(-\Delta(g\,\cinf)\big)\,\ninf}\le R^{-\alpha}\,e^{-\frac 12\,R^2}\,\varepsilon(R)
\]
for any function $f\in H^1$ such that $\mathrm{supp}(f)\subset\R^2\setminus B(0,R)$. 

\medskip Assume first that Condition \eqref{Cdt:OrthogConst} is satisfied. We notice that
\[
\left|\ix{f\,\nabla(g\,\cinf)\cdot\nabla\ninf}\right|\le2\ix{\left|f\,\sqrt{\ninf}\right|\;\left|\nabla(g\,\cinf)\right|\;\left|(\nabla \cinf-x)\,\sqrt{\ninf}\right|}
\]
can be estimated by
\[
\left|\ix{f\,\nabla(g\,\cinf)\cdot\nabla\ninf}\right|\le2\sup_{|x|>R}\left|(\nabla \cinf-x)\,\sqrt\ninf\right|\(\ix{|f|^2\,\ninf}\ix{|\nabla(g\,\cinf)|^2}\)^{1/2}.
\]
As a consequence of~\eqref{Ineq:LimHLS2}, we find that
\begin{multline*}
\left|\ix{f\,\nabla(g\,\cinf)\cdot\nabla\ninf}\right|\le2\sup_{|x|>R}\left|(\nabla \cinf-x)\,\sqrt\ninf\right|\ix{|f|^2\,\ninf}\\
\le2\sup_{|x|>R}\left|(\nabla \cinf-x)\,\sqrt\ninf\right|\,\varepsilon(R)\ix{|\nabla f|^2\,\ninf}\;.
\end{multline*}
On the other hand, since Condition \eqref{Cdt:OrthogConst} is satisfied, we know for free that
\[
\mathsf Q_1[f]=\ix{|f|^2\,\ninf}-\ix{|\nabla(g\,\cinf)|^2}\le\ix{|f|^2\,\ninf}\;.
\]
As a consequence, we have obtained that
\[
\lim_{R\to\infty}\kern -4pt\inf_{\begin{array}{c}f\in\mathcal D(\mathsf L_2)\setminus\{0\}\\
\mathrm{supp}(f)\subset\R^2\setminus B(0,R)\end{array}}\kern -4pt\frac{\mathsf Q_2[f]}{\mathsf Q_1[f]}
=\lim_{R\to\infty}\kern -4pt\inf_{\begin{array}{c}f\in\mathcal D(\mathsf L_2)\setminus\{0\}\\
\mathrm{supp}(f)\subset\R^2\setminus B(0,R)\end{array}}\kern -4pt\frac{\ix{|\nabla f|^2\,\ninf}}{\ix{|f|^2\,\ninf}}=+\infty\;,
\]
which proves our claim.

\medskip If Condition \eqref{Cdt:OrthogConst} is not satisfied, the proof is more complicated. By homogeneity, there is no restriction to assume that $\frac 1M\ix{f^2\,\ninf}=1$. Let $\theta:=\frac 1M\ix{f\,\ninf}$ and $\tilde f:=f-\theta$, $\tilde g:=g-\theta$. Then $\ix{\tilde f\,\ninf}=0$. Notice that, by the Cauchy-Schwarz inequality, we have: $\theta\in[-1,1]$. Moreover, if $\theta\neq0$, then $B(0,R)$ is contained in $\mathrm{supp}(\tilde f)$.

With these notations, we first have to estimate
\[
\ix{f\,\nabla(g\,\cinf)\cdot\nabla\ninf}=2\,\theta\ix{f\,\sqrt\ninf\,\nabla\cinf\cdot\nabla\sqrt\ninf}+2\ix{f\,\sqrt\ninf\,\nabla(\tilde g\,\cinf)\cdot\nabla\sqrt\ninf}\;.
\]
By the Cauchy-Schwarz inequality, we have
\[
\left|\ix{f\,\nabla\cinf\cdot\nabla\sqrt\ninf}\right|^2\le\ix{f^2\,\ninf}\int_{\R^2\setminus B(0,R)}|\nabla\cinf|^2\,|\nabla\sqrt\ninf|^2\,dx\;,
\]
and it is simple to check that the last integral in the right hand side converges to $0$ as $R\to\infty$. The second integral can be estimated as before by writing
\[
\left|\ix{f\,\sqrt\ninf\,\nabla(\tilde g\,\cinf)\cdot\nabla\sqrt\ninf}\right|^2\le\sup_{|x|>R}\left|\nabla\sqrt\ninf\right|^2\ix{|f|^2\,\ninf}\ix{|\nabla(\tilde g\,\cinf)|^2}
\]
and by recalling that $\ix{|\tilde f|^2\,\ninf}=M$. From these estimates, we conclude that
\[
\lim_{R\to\infty}\kern -4pt\inf_{\begin{array}{c}f\in\mathcal D(\mathsf L_2)\setminus\{0\}\\
\mathrm{supp}(f)\subset\R^2\setminus B(0,R)\end{array}}\kern -4pt\mathsf Q_2[f]=\infty\;.
\]
We also need to estimate $\mathsf Q_1[f]$ and this can be done by showing that
\[
\mathsf Q_1[f]=M+\frac 1{2\pi}\iint_{\R^2\times\R^2}(f\,\ninf)(x)\,\log|x-y|\,(f\,\ninf)(y)\;dx\,dy
\]
is bounded from above if we still impose that $\ix{|f|^2\,\ninf}=M$. Using the crude estimate
\[
2\,\log|x-y|\le|x-y|^2\le2\,(|x|^2+|y|^2)\quad\forall\;(x,y)\in\R^2\times\R^2
\]
and, as a consequence, 
\[
\iint_{\R^2\times\R^2}(f\,\ninf)(x)\,\log|x-y|\,(f\,\ninf)(y)\;dx\,dy\le2\ix{f\,\ninf}\int_{\R^2}|y|^2\,(f\,\ninf)(y)\;dy\;,
\]
we conclude by observing that
\begin{multline*}
\ix{f\,\ninf}\le\sqrt M\,\(\int_{\R^2\setminus B(0,R)}\ninf\;dx\)^{1/2}\\
\mbox{and}\;\int_{\R^2}|y|^2\,(f\,\ninf)(y)\;dy\le\sqrt M\,\(\int_{\R^2\setminus B(0,R)}|y|^4\,\ninf\;dy\)^{1/2}
\end{multline*}
both converge to $0$ as $R\to\infty$.\end{proof}

%---------------------------------------------------------------------
\begin{Lemma}\label{Lem:DiscreteSpectrum} With the above notations, $\Lambda_1>0$.\end{Lemma}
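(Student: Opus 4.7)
\emph{Proof plan.} Since $\mathsf Q_2[f]\geq 0$ on $\mathcal D(\mathsf L_2)$, we already have $\Lambda_1\geq 0$, so the task is to rule out $\Lambda_1=0$. Arguing by contradiction, I would pick a minimizing sequence $(f_n)\subset\mathcal D(\mathsf L_2)$ normalized by $\mathsf Q_1[f_n]=1$ with $\mathsf Q_2[f_n]\to 0$, extract a limit by a concentration-compactness argument using Lemma~\ref{Lem:EssSpectrum}, and then use the triviality of $\mathop{Ker}(\L)\cap\mathcal D(\mathsf L_2)$ established in Proposition~\ref{Prop:LisSelfAdjoint} to derive a contradiction.

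To obtain a priori bounds I would expand $\mathsf Q_2[f_n]$ as in the proof of Lemma~\ref{Lem:EssSpectrum}, combine the resulting identity with Lemma~\ref{Lem:LimHLS} (which controls $\int|\nabla(g_nc_\infty)|^2\,dx$ by $\int f_n^2\,n_\infty\,dx$) and the super-Gaussian decay of $n_\infty$; this yields a uniform bound on $\int_{\R^2}|\nabla f_n|^2\,n_\infty\,dx$. Since $n_\infty$ is smooth and strictly positive on any bounded ball, I get a uniform $H^1_{\mathrm{loc}}(\R^2)$ bound on $(f_n)$, so by Rellich--Kondrachov a subsequence converges in $L^2_{\mathrm{loc}}(\R^2,n_\infty\,dx)$ and weakly in the energy space to some limit $f$.

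The crucial step is to upgrade this local convergence to full $\mathsf Q_1$-convergence, which is where Lemma~\ref{Lem:EssSpectrum} enters: for any $\eta>0$ there exists $R_\eta$ such that every $\varphi\in\mathcal D(\mathsf L_2)$ supported in $\R^2\setminus B(0,R_\eta)$ satisfies $\mathsf Q_1[\varphi]\leq\eta\,\mathsf Q_2[\varphi]$. Using a smooth cutoff $\chi_R$ equal to $1$ on $B(0,R)$, the tail part of $f_n$ in $\mathsf Q_1$-norm is controlled by $\eta\,\mathsf Q_2[f_n]=o(1)$ plus local error terms, so the mass of $f_n$ does not escape to infinity, and the local strong convergence together with a diagonal extraction gives $\mathsf Q_1[f]=1$; in particular $f\neq 0$. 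By weak lower semicontinuity of $\mathsf Q_2$, we then have $\mathsf Q_2[f]\leq\liminf_n\mathsf Q_2[f_n]=0$, so $f\in\mathop{Ker}(\mathsf L_2)\cap\mathcal D(\mathsf L_2)$; Proposition~\ref{Prop:LisSelfAdjoint} forces $f=0$, a contradiction.

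The main technical obstacle is the localization of the nonlocal form $\mathsf Q_2$: replacing $f_n$ by $\chi_R f_n$ changes the associated potential $G_2\ast(\chi_R f_n\,n_\infty)$ in a nonlocal way, so $\chi_R$ does not commute with the map $f\mapsto g c_\infty$ and one must carefully estimate the commutator error. Bounding it uses the fast decay of $n_\infty$ and Lemma~\ref{Lem:LimHLS}, in the same spirit as the treatment of the non-orthogonal case at the end of the proof of Lemma~\ref{Lem:EssSpectrum}; verifying that the orthogonality condition~\eqref{Cdt:Average} is preserved in the limit (so that the limit $f$ indeed belongs to the domain on which $\mathsf Q_1$ is positive definite) is the other point that requires care.
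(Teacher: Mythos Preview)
Your plan follows the same concentration--compactness strategy as the paper's proof, and the logical skeleton (extract a limit, prevent escape via Lemma~\ref{Lem:EssSpectrum}, conclude via the kernel characterization) is right. The paper in fact proves slightly more than you do: it shows directly that $\Lambda_1$ is \emph{achieved} by a minimizer, rather than arguing by contradiction from $\Lambda_1=0$.

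There is however one concrete gap in your a~priori bounds. You normalize by $\mathsf Q_1[f_n]=1$ and then claim to obtain a uniform bound on $\int_{\R^2}|\nabla f_n|^2\,\ninf\,dx$ by expanding $\mathsf Q_2[f_n]$ and invoking Lemma~\ref{Lem:LimHLS}. But Lemma~\ref{Lem:LimHLS} only controls the Poisson term by $\int_{\R^2} f_n^2\,\ninf\,dx$, and this $L^2(\ninf\,dx)$ norm is \emph{not} bounded by $\mathsf Q_1[f_n]$: since $\mathsf Q_1[f]=\int_{\R^2} f^2\,\ninf\,dx-\int_{\R^2} f\,\ninf\,(-\Delta)^{-1}(f\,\ninf)\,dx$ is a difference of two nonnegative terms, both can be large while their difference equals~$1$. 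The estimate $\int_{\R^2} f^2\,\ninf\,dx\le C\,\mathsf Q_1[f]$ is exactly Corollary~\ref{Cor:LimHLS}, which is proved only later in Section~\ref{Sec:Strict} and therefore unavailable here. The paper avoids this circularity by normalizing instead with $\int_{\R^2} f_n^2\,\ninf\,dx=1$ (so that $\mathsf Q_1[f_n]\in(0,1]$ and $\mathsf Q_2[f_n]$ is bounded along a minimizing sequence), and then runs the IMS truncation on $F_n:=f_n\sqrt{\ninf}$. With that change of normalization your argument goes through; as written, the step ``this yields a uniform bound on $\int_{\R^2}|\nabla f_n|^2\,\ninf\,dx$'' is unjustified.
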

%---------------------------------------------------------------------
\begin{proof} Tools for the proof of this lemma are to a large extent standard in concentration-compactness methods or when applied to models of quantum chemistry like in \cite{CL1}, so we shall only sketch the main steps and omit as much as possible the technicalities of such an approach. We will actually prove a result that is stronger than the one of Lemma~\ref{Lem:DiscreteSpectrum}: $\Lambda_1$ is achieved by some function $f\in\mathcal D(\mathsf L_2)$.

Consider a minimizing sequence $(f_n)_{n\in\N}$ for the functional $f\mapsto\mathsf Q_2[f]/\mathsf Q_1[f]$ defined on $\mathcal D(\mathsf L_2)\setminus\{0\}$. By homogeneity, we may assume that $\ix{f^2\,\ninf}=1$ for any $n\in\N$, with no restriction, while $\mathsf Q_2[f_n]$ is bounded uniformly in $n\in\N$. Let $F_n:=f_n\,\sqrt\ninf$. In the framework of concentration-compactness methods, for any given $\varepsilon>0$, it is a standard result that one can decompose $F_n$ as
\[
F_n=F_n^{(1)}+F_n^{(2)}+\widetilde F_n
\]
for any $n\in\N$, with
\[
\ix{|F_n^{(1)}|^2}+\ix{|F_n^{(2)}|^2}+\ix{|\widetilde F_n|^2}=1
\]
where $F_n^{(1)}$, $F_n^{(2)}$ and $\widetilde F_n$ are supported respectively in $B(0,2\,R)$, $\R^2\setminus B(0,R_n)$ and $B(0,2\,R_n)\setminus B(0,R)$, $R_n>R>1$, $\lim_{n\to\infty}R_n=\infty$,
\[
\ix{|F_n^{(1)}|^2}\ge\theta-\varepsilon\quad\mbox{and}\quad\ix{|F_n^{(2)}|^2}\ge 1-\theta-\varepsilon
\]
for some $\theta\in[0,1]$, which is independent of $\varepsilon$. As a consequence, we also have that $\ix{|\widetilde F_n|^2}\le 2\,\varepsilon$. A standard method to obtain such a decomposition is based on the IMS truncation method, which goes as follows. Take a smooth truncation function $\chi$ with the following properties: $0\le\chi\le1$, $\chi(x)=1$ for any $x\in B(0,1)$, $\chi(x)=0$ for any $x\in\R^2\setminus B(0,2)$, and define $\chi_R(x):=\chi(x/R)$ for any $x\in\R^2$. Then for an appropriate choice of $R$ and $(R_n)_{n\in\N}$, we can choose
\[
F_n^{(1)}=\chi_R\,F_n\quad\mbox{and}\quad F_n^{(2)}=\sqrt{1-\chi_{R_n}^2}\,F_n\;.
\]
If $\theta=1$, then $(F_n^{(1)})_{n\in\N}$ strongly converges in $L^2_{\rm loc}(\R^2,dx)$ to some limit $F$ and we have
\[
\liminf_{n\to\infty}\ix{|\nabla F_n|^2}\ge\ix{|\nabla F|^2}\quad\mbox{and}\quad\ix{|F|^2}\ge1-\varepsilon\;.
\]
Now we repeat the argument as $\varepsilon=\varepsilon_n\to 0_+$, take a diagonal subsequence that we still denote by $(F_n)_{n\in\N}$, define 
\[
F_n^{(1)}=\chi_{R_n^{(1)}}\,F_n\;,\quad F_n^{(2)}=\sqrt{1-\chi_{R_n^{(2)}}^2}\,F_n\quad\mbox{and}\quad \widetilde F_n=F_n-F_n^{(1)}-F_n^{(2)}\;,
\]
where $\lim_{n\to\infty}R_n^{(1)}=+\infty$ and $R_n^{(2)}\ge 2\,R_n^{(1)}$ for any $n\in\N$. Since limits obtained above by taking a diagonal subsequence coincide on larger and larger centered balls (that is when $R$ increases), we find a nontrivial minimizer $f=F/\sqrt\ninf$ such that $\ix{F^2}=\ix{f^2\,\ninf}=1$, since all other terms are relatively compact. Notice that $\mathsf Q_1[f]>0$ because the condition~\eqref{Cdt:Average} is preserved by passing to the limit. Hence $\Lambda_1$ is achieved and we know that $\Lambda_1$ is positive because~$\mathsf Q_1$ is positive semi-definite. 

Assume now that $\theta<1$. We know that $\ix{|\widetilde F_n|^2}\le 2\,\varepsilon_n$ and hence
\[
\lim_{n\to\infty}\ix{|\widetilde F_n|^2}=0\;.
\]
It is not difficult to see that cross terms do not play any role in the integrals involving convolution kernels, as it is standard for Hartree type (or Schr\"odinger-Poisson) models. As a consequence, we can write that
\[
\lim_{n\to\infty}\mathsf Q_1[f_n]=\lim_{n\to\infty}\mathsf Q_1[f_n^{(1)}]+\lim_{n\to\infty}\mathsf Q_1[f_n^{(2)}]
\]
where $f_n^{(i)}:=F_n^{(i)}/\sqrt\ninf$, $i=1$, $2$. Proceeding as above, we may find a limit $f$ of $(f_n^{(1)})_{n\in\N}$, in $L^2(\ninf\,dx)$. It is then straightforward to observe that
\[
\Lambda_1=\lim_{n\to\infty}\frac{\mathsf Q_2[f_n]}{\mathsf Q_1[f_n]}\ge\lim_{n\to\infty}\frac{\mathsf Q_2[f]+\mathsf Q_2[f_n^{(2)}]}{\mathsf Q_1[f]+\mathsf Q_1[f_n^{(2)}]}\;.
\]
If $\theta>0$, we know that $\mathsf Q_2[f]\ge\Lambda_1\,\mathsf Q_1[f]$ and $\lim_{n\to\infty}\mathsf Q_2[f_n^{(2)}]/\mathsf Q_1[f_n^{(2)}]>\Lambda_1$ by Lemma~\ref{Lem:EssSpectrum}, so that $\lim_{n\to\infty}\mathsf Q_2[f_n^{(2)}]=0$ and $f$ is a nontrivial minimizer: we are back to the case $\theta=1$, but with a different normalization of $f$. If $\theta=0$, it is clear that $\mathsf Q_1[f]=0$ and we get
\[
\Lambda_1\ge\lim_{n\to\infty}\frac{\mathsf Q_2[f_n^{(2)}]}{\mathsf Q_1[f_n^{(2)}]}=\infty\;,
\]
again by Lemma~\ref{Lem:EssSpectrum}, a contradiction with the fact that $\mathsf Q_2[f]/\mathsf Q_1[f]$ takes finite values for arbitrary test functions in $\mathcal D(\mathsf L_2)\setminus\{0\}$. This concludes our proof.
\end{proof}

\begin{Rem} For any $k\in\N$, $k\ge1$, define the Raleigh quotient
\[
\Lambda_k:=\inf_{\begin{array}{c}f\in\mathcal D(\mathsf L_2)\setminus\{0\}\\\langle f_j,f\rangle=0\,,\;j=0,\,1,...k-1\,\end{array}}\frac{\mathsf Q_2[f]}{\mathsf Q_1[f]}
\]
where $f_j$ denotes a critical point associated to $\Lambda_j$. Critical points are counted with multiplicity. Since the orthogonality condition $\langle f_j,f\rangle=0$ is preserved by taking the limit along the weak topology of $L^2(\ninf\,dx)$, building a minimizing sequence for $k\ge1$ goes as in the case $k=1$. It is easy to check that $\Lambda_k$ is then an eigenvalue of $\L$ considered as an operator on $\mathcal D(\mathsf L_2)$ with scalar product $\langle\cdot,\cdot\rangle$, for any $k\ge 1$and $\lim_{k\to\infty}\Lambda_k=\infty$.
\end{Rem}

%%%%%%%%%%%%%%%%%%%%%%%%%%%%%%%%%%%%%%%%%%%%%%%%%%%%%%%%%%%%%%%%%%%%%%
\subsection{A spectral gap inequality}\label{Sec:SpectralGap}

We are now going to prove that Ineq.~\eqref{Gap} holds with $\Lambda_1=\lambda_{1,i}=1$, $i=1$, $2$. This is our first main estimate.
%---------------------------------------------------------------------
\begin{Teo}\label{Teo:Gap} For any function $f\in\mathcal D(\mathsf L_2)$, we have
\[
\mathsf Q_1[f]\le\mathsf Q_2[f]\;.
\]
\end{Teo}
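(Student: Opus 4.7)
The plan is to decompose the problem using spherical harmonics, exploiting the fact that $n_\infty$ (hence $c_\infty$) is radially symmetric, so that the operators $\mathcal L$, $\mathsf L_1$, $\mathsf L_2$ commute with planar rotations. Writing $f(r,\theta)=\sum_{k\in\Z}f_k(r)\,e^{ik\theta}$ in polar coordinates, both quadratic forms split as orthogonal sums $\mathsf Q_j[f]=\sum_k\mathsf Q_j^{(k)}[f_k]$, $j=1,2$, where each $\mathsf Q_j^{(k)}$ is a one-dimensional quadratic form on $(0,\infty)$ with weight $r\,n_\infty(r)\,dr$. The convolution with $G_2$ in each angular sector reduces to a one-dimensional integral operator whose kernel can be written explicitly via the Bessel/Hankel decomposition of $G_2$, or equivalently as the Green's function of $-(\partial_r^2+r^{-1}\partial_r-k^2/r^2)$ on the half-line. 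It therefore suffices to establish the inequality $\mathsf Q_1^{(k)}[f_k]\le\mathsf Q_2^{(k)}[f_k]$ in each sector $k\in\Z$, using the eigenfunctions of Lemma~\ref{Lem:Eigenfunctions} as a guide to locate the optimal constant.

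For the non-radial sectors $k\ne0$, condition~\eqref{Cdt:OrthogConst} is automatic because $\int_0^{2\pi}e^{ik\theta}\,d\theta=0$. When $|k|=1$, the function $\partial_r\log n_\infty$ is an explicit eigenfunction of the reduced operator with eigenvalue exactly $1$ (this is $f_{1,i}$ rewritten in polar form), so the sectorial gap is at most $1$; a direct Sturm--Liouville argument, using that this eigenfunction is radial on $(0,\infty)$ and is strictly negative away from the origin (hence is a ground state for the reduced self-adjoint problem on $L^2(r\,n_\infty\,dr)$), shows it is the bottom of the spectrum, yielding $\Lambda_1^{(1)}=1$. For $|k|\ge 2$, the additional centrifugal contribution $k^2/r^2$ in the reduced Dirichlet form $\mathsf Q_2^{(k)}$ (compared to $\mathsf Q_2^{(1)}$) increases the Rayleigh quotient by a positive amount, so $\Lambda_1^{(k)}\ge k^2\ge 4>1$; this comparison can be made rigorous by writing $\mathsf Q_2^{(k)}[f_k]=\mathsf Q_2^{(1)}[f_k]+(k^2-1)\int|f_k|^2r^{-1}n_\infty\,dr\,+\,\text{(positive convolution correction)}$ and invoking the $|k|=1$ estimate.

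The radial sector $k=0$ is the main obstacle, because the kernel of $\mathcal L$ (spanned by $f_{0,0}$, see Proposition~\ref{Prop:KerL}) lives there, and the next eigenfunction $f_{0,1}$ has eigenvalue $2$. The plan is to translate the radial problem into a one-dimensional Sturm--Liouville problem via the cumulated density variable $s=\pi r^2$, $\phi(s):=\int_{B(0,r)}f\,n_\infty\,dx$, in the spirit of the symmetrization techniques of Section~\ref{Sec:Symmetrization}: the operator $-\Delta$ acting on radial functions becomes an explicit second-order operator in $s$, and the convolution $G_2*(f\,n_\infty)$ becomes a primitive of $\phi/s$. On the orthogonal of $f_{0,0}$ (i.e.\ $\phi(\infty)=0$), one establishes that the resulting one-dimensional operator has bottom of its spectrum equal to $2$, matching the eigenvalue of $f_{0,1}$; this relies on a Hardy-type inequality weighted by $n_\infty$ plus an explicit control of the lower-order terms, and is precisely the content announced as Proposition~\ref{Prop:RadialGap}. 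Assembling the three cases yields $\Lambda_1=\min(2,1,k^2)=1$, which proves the inequality $\mathsf Q_1[f]\le\mathsf Q_2[f]$.
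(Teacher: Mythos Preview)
Your overall strategy---spherical harmonics decomposition, then identifying the bottom eigenvalue in each sector using the explicit eigenfunctions of Lemma~\ref{Lem:Eigenfunctions}---is exactly the paper's. The execution, however, has a genuine gap in the $|k|=1$ sector and an incorrect claim for $|k|\ge 2$.

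\textbf{The $|k|=1$ gap.} You argue that $\partial_r\log n_\infty$ has one sign on $(0,\infty)$ and conclude from ``Sturm--Liouville'' that it is the ground state of the reduced operator. But the reduced operator in each angular sector is \emph{non-local}: the term $g\,c_\infty$ is obtained by inverting $-\partial_r^2-r^{-1}\partial_r+k^2/r^2$ against $f_k\,n_\infty$, so the eigenvalue equation couples $f_k$ to an integral of itself. Classical oscillation theory / Perron--Frobenius (positive eigenfunction $\Rightarrow$ ground state) requires a positivity-improving semigroup, which is not established here; self-adjointness in $L^2(r\,n_\infty\,dr)$ is not even the right setting (the operator is self-adjoint for~$\mathsf Q_1$, not for that weighted $L^2$). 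The paper closes this gap with two ingredients you omit. First, for radial $f$ the cumulated density $\phi(s)=\frac1{2\pi}\int_{B(0,\sqrt s)}f\,n_\infty\,dx$ satisfies a \emph{local} second-order ODE, because the Poisson term becomes $-4s\,\psi''=\phi$ and can be eliminated algebraically; only then does Sturm--Liouville apply legitimately, and the positive solution $s\mapsto s\,\Phi'(s)$ pins the radial gap at~$2$. Second, and this is the step you are missing entirely, the paper observes that $f$ solves the $k=1$ eigenvalue problem at level $\lambda$ if and only if $\tilde f(r)=r\,f(r)$ solves the \emph{radial} ($k=0$) problem at level $\lambda+1$. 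This substitution reduces $k=1$ to the already-handled local radial problem and yields $\lambda_{1,1}=2-1=1$ rigorously.

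\textbf{The $|k|\ge 2$ claim.} Your asserted decomposition $\mathsf Q_2^{(k)}[f_k]=\mathsf Q_2^{(1)}[f_k]+(k^2-1)\int|f_k|^2 r^{-1}n_\infty\,dr+(\text{positive})$ is not correct as written, because the Poisson solution $h_k$ depends on $k$ through the Green kernel, so $\mathsf Q_2^{(1)}[f_k]$ and $\mathsf Q_2^{(k)}[f_k]$ involve different functions $f_k-h_k$; there is no clean additive splitting. The conclusion $\Lambda_1^{(k)}\ge k^2$ is in fact false in general: in the small-mass limit $M\to 0$ the linearized operator degenerates to the Ornstein--Uhlenbeck operator, whose lowest eigenvalue in the $k$-th sector is $k$, not $k^2$. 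All that is needed (and all the paper claims) is the monotonicity $\Lambda_1^{(k)}\ge\Lambda_1^{(1)}$ for $k\ge1$, which follows once the $k=1$ case is reduced to the radial one via the substitution above.

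Your treatment of the radial sector is on the right track (you correctly identify the cumulated-density change of variables and the boundary condition $\phi(\infty)=0$), but invoking a ``Hardy-type inequality'' is vaguer than what the paper actually does: it writes the resulting ODE in Sturm--Liouville form and reads off that the nonnegative solution $s\Phi'(s)$ sits at $\lambda=2$. Note also that Proposition~\ref{Prop:RadialGap} in the paper is a \emph{consequence} of this step, not an input you may cite.
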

%---------------------------------------------------------------------
Recall that, with the notations of Section~\ref{Sec:Linearization}, $\mathsf Q_1[f]=\langle f,f\rangle$ and $\mathsf Q_2[f]=\langle f,\,\L\,f\rangle$.

\begin{proof} We have to compute the lowest positive eigenvalue of $\L$. After a reformulation in terms of cumulated densities for the solution of \eqref{RKS} and for the eigenvalue problem for $\mathcal L$, we will identify the lowest eigenvalue $\lambda_{0,1}=2$ when $\mathcal L$ is restricted to radial functions, and the lowest ones, $\lambda_{1,1}=\lambda_{1,2}=1$, when $\mathcal L$ is restricted to functions corresponding to the $k=1$ component in the decomposition into spherical harmonics.

\medskip\noindent\emph{Step 1.~Reformulation in terms of cumulated densities.} 

Among spherically symmetric functions, it is possible to reduce the problem to a single ordinary differential equation.

Consider first a stationary solution $(\ninf,\cinf)$ of~\eqref{RKS} and as in \cite{0039} or \cite{springerlink:10.1007/s00285-010-0357-5} (also see references therein), let us rewrite the system in terms of the \emph{cumulated densities} $\Phi$ and $\Psi$ defined by
\begin{eqnarray*}
&&\Phi(s):=\frac 1{2\pi}\int_{B(0,\sqrt s)}\ninf(x)\;dx\;,\\
&&\Psi(s):=\frac 1{2\pi}\int_{B(0,\sqrt s)}\cinf(x)\;dx\;.
\end{eqnarray*}
Notice that $\Phi(s)=\frac 1{2\,\pi}\,M_\eps(\pi\,s)$ for $\eps=0$, with the notations of the proof of Theorem~\ref{Thm:UniformBound}. The motivation for such a reformulation is that the system can be rewritten in terms of a nonlinear, local, ordinary differential equation for $\Phi$ using the fact that $\ninf$ is radial. With a slight abuse of notations, we can consider $\ninf$ and $\cinf$ as functions of $r=|x|$. Elementary computations show that
\[
\ninf(\sqrt s)=2\,\Phi'(s)\quad\mbox{and}\quad\ninf'(\sqrt s)=4\,\sqrt s\,\Phi''(s)\;,
\]
\[
\cinf(\sqrt s)=2\,\Psi'(s)\quad\mbox{and}\quad\cinf'(\sqrt s)=4\,\sqrt s\,\Psi''(s)\;.
\]
After one integration with respect to $r=\sqrt s$, the Poisson equation $-\Delta\cinf=\ninf$ can be rewritten as
\[
-\sqrt s\,\cinf'(\sqrt s)=\Phi(s)
\]
while the equation for $\ninf$, after an integration on $(0,r)$, is
\[
\ninf'(\sqrt s)+\sqrt s\,\ninf(\sqrt s)-\ninf(\sqrt s)\,\cinf'(\sqrt s)=0\;.
\]
These two equations written in terms of $\Phi$ and $\Psi$ are
\[
-\,4\,s\,\Psi''=\Phi
\]
and
\[
\Phi''+\frac 12\,\Phi'-\,2\,\Phi'\,\Psi''\,.
\]
After eliminating $\Psi''$, we find that $\Phi$ is the solution of the ordinary differential equation
\be{Eqn:CumulatedDensities}
\Phi''+\frac 12\,\Phi'+\frac 1{2\,s}\,\Phi\,\Phi'=0
\ee
with initial conditions $\Phi(0)=0$ and $\Phi'(0)=\frac 12\,n(0)=:a$, so that all solutions can be para\-metrized in terms of $a>0$.

\medskip Consider next the functions $f$ and $g$ involved in the linearized Keller-Segel system~\eqref{RKS} and define the corresponding \emph{cumulated densities} given by
\begin{eqnarray*}
&&\phi(s):=\frac 1{2\pi}\int_{B(0,\sqrt s)}(f\,\ninf)(x)\;dx\;,\\
&&\psi(s):=\frac 1{2\pi}\int_{B(0,\sqrt s)}(g\,\cinf)(x)\;dx\;.
\end{eqnarray*}
If $g\,\cinf=(-\Delta)^{-1}(f\,\ninf)$ and $f$ is a solution of the eigenvalue problem
\[
-\,\L\,f=\lambda\,f\;,
\]
then we can make a computation similar to the above one and get
\[
(\ninf\,f)(\sqrt s)=2\,\phi'(s)\;,\quad(\ninf\,f')(\sqrt s)=4\,\sqrt s\,\phi''(s)-\,2\,\frac{\ninf'}{\ninf}(\sqrt s)\,\phi'(s)\;,
\]
\[
(g\,\cinf)(\sqrt s)=2\,\psi'(s)\quad\mbox{and}\quad(g\,\cinf)'(\sqrt s)=4\,\sqrt s\,\psi''(s)\;.
\]
The equations satisfied by $f$ and $g$ are
\[
-\sqrt s\,(g\,\cinf)'(\sqrt s)=\phi(s)
\]
and 
\[
\sqrt s\,\Big((\ninf\,f')(\sqrt s)-\ninf\,(g\,\cinf)'(\sqrt s)\Big)+\lambda\,\phi(s)=0\;.
\]
These two equations written in terms of $\phi$ and $\psi$ become
\[
-\,4\,s\,\psi''=\phi
\]
and
\[
4\,s\(\phi''-\frac{\Phi''}{\Phi'}\,\phi'-\,2\,\Phi'\,\psi''\)+\lambda\,\phi=0\;.
\]
After eliminating $\psi''$, we find that $\phi$ is the solution of the ordinary differential equation
\[
\phi''-\frac{\Phi''}{\Phi'}\,\phi'+\frac{\lambda+2\,\Phi'}{4\,s}\,\phi=0\;.
\]
Taking into account the equation for $\Phi$, that is
\[
-\frac{\Phi''}{\Phi'}=\frac 12+\frac\Phi{2\,s}\;,
\]
we can also write that $\phi$ solves
\be{Eqn:CumulatedDensitiesLinearized}
\phi''+\frac{s+\Phi}{2\,s}\,\phi'+\frac{\lambda+2\,\Phi'}{4\,s}\,\phi=0\;.
\ee

Recall that the set of solutions to \eqref{Eqn:CumulatedDensities} is parametrized by $a=\Phi'(0)$. It is straightforward to remark that $\phi=\frac d{da}\Phi$ solves \eqref{Eqn:CumulatedDensitiesLinearized} with $\lambda=0$. The reader is invited to check that $s\mapsto s\,\Phi'(s)$ provides a nonnegative solution of \eqref{Eqn:CumulatedDensitiesLinearized} with $\lambda=2$.

\medskip\noindent\emph{Step 2.~Characterization of the radial ground state.}

It is possible to rewrite~\eqref{Eqn:CumulatedDensitiesLinearized} as
\[
\frac d{ds}\,\(e^{\alpha(s)}\,\frac{d\phi}{ds}\)+\frac{\lambda+2\,\Phi'}{4\,s}\,e^{\alpha(s)}\,\phi=0\quad\mbox{with}\quad\alpha(s):=\frac s2+\frac 12\int_0^s\frac{\phi(\sigma)}\sigma\;d\sigma\;.
\]
The equation holds on $(0,\infty)$ and boundary conditions are $\phi(0)=0$ and $\lim_{s\to\infty}\phi(s)=0$. By the Sturm-Liouville theory, we know that $\lambda=2=\lambda_{0,1}$ is then the lowest positive eigenvalue such that~$\phi$ is nonnegative and satisfies the above boundary conditions. 

In other words, we have shown that the function $f_{0,1}$ found in Section~\ref{Sec:SomeEigenfunctions} generates the eigenspace corresponding to the lowest positive eigenvalue of $\L$ restricted to radial functions.

\medskip\noindent\emph{Step 3.~Spherical harmonics decomposition.}

We have to deal with non-radial modes of $\L$. Since $\ninf$ and $\cinf$ are both radial, we can use a spherical harmonics decomposition for that purpose. As in \cite{KS-num2011}, the eigenvalue problem for the operator $\mathcal L$ amounts to solve among radial functions $f$ and $g$ the system
\begin{eqnarray*}
&&-f''-\frac 1r\,f'+\frac{k^2}{r^2}\,f+(r-\cinf')\,(f'-(g\,\cinf)')-\ninf\,f=\lambda\,f\;,\\
&&-(g\,\cinf)''-\frac 1r\,(g\,\cinf)'+\frac{k^2}{r^2}\,(g\,\cinf)=\ninf\,f\;,
\end{eqnarray*}
for some $k\in\N$, $k\ge 1$. Here as above, we make the standard abuse of notations that amounts to write $\ninf$ and $\cinf$ as a function of $r=|x|$. It is straightforward to see that $k=1$ realizes the infimum of the spectrum of $\L$ among non-radial functions. The function $f=-\ninf'$ provides a nonnegative solution for $k=1$ and $\lambda=1$. It is then possible to conclude using the following observation: $f$ is a radial $C^2$ solution if and only if $r\mapsto r\,f=:\tilde f(r)$ solves $-\,\L\,\tilde f=(\lambda+1)\,\tilde f$ among radial functions, and we are back to the problem studied in Step 2. The value we look for is therefore $\lambda=1=\lambda_{1,1}=\lambda_{1,2}$.
%---------------------------------------------------------------------
\begin{figure}[hb]
\includegraphics[width=7cm]{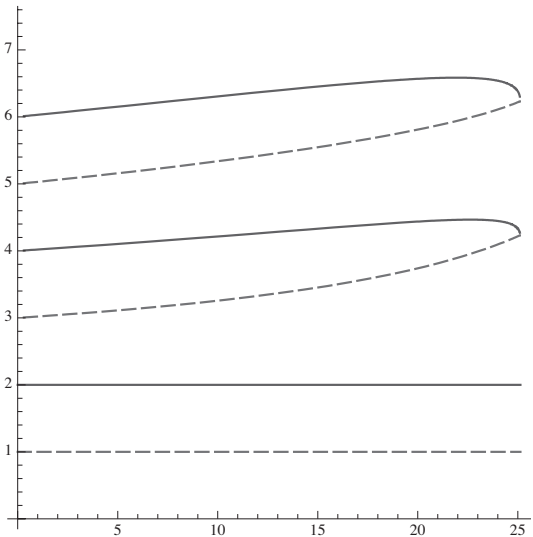}
\caption{\sl Using a shooting method, one can numerically compute the lowest eigenvalues of $-\mathcal L$ for \hbox{$k=0$} (radial functions, plain curves) and for the $k=1$ component of the spherical harmonics decomposition (dashed curves). The plot shows that $1$ and $2$ are the lowest eigenvalues, when mass varies between $0$ and $8\,\pi\approx25.1327$. See \cite{KS-num2011} for details and further numerical results.}
\end{figure}
%---------------------------------------------------------------------

In other words, we have shown that the functions $f_{1,1}$ and $f_{1,2}$ found in Section~\ref{Sec:SomeEigenfunctions} generate the eigenspace corresponding to the lowest positive eigenvalue of $\L$ corresponding to $k=1$. We are now in position to conclude the proof of Theorem~\ref{Teo:Gap}.

Either the spectral gap is achieved among radial functions and $\Lambda_1=2$, or it is achieved among functions in one of the non-radial components corresponding to the spherical harmonics decomposition: the one given by $k=1$ minimizes the gap and hence we obtain $\Lambda_1=1$. See Fig.~1 for an illustration.\end{proof}

As a consequence of Step 2 in the proof of Theorem~\ref{Teo:Gap}, we find that the following inequality holds.
%---------------------------------------------------------------------
\begin{Prop}\label{Prop:RadialGap} For any radial function $f\in\mathcal D(\mathsf L_2)$, we have
\[
2\,\mathsf Q_1[f]\le\mathsf Q_2[f]\;.
\]
where, with the notations of Section~\ref{Sec:Linearization}, $\mathsf Q_1[f]=\langle f,f\rangle$ and $\mathsf Q_2[f]=\langle f,\,\L\,f\rangle$.\end{Prop}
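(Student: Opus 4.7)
The plan is to deduce Proposition~\ref{Prop:RadialGap} directly from the analysis already performed in Step~2 of the proof of Theorem~\ref{Teo:Gap}, using the variational characterization of the spectral gap restricted to the radial sector. The point is that everything needed is essentially in place; one just has to observe that the gap estimate $\Lambda_1\ge 1$ used for non-radial modes can be improved to $2$ when test functions are restricted to radial functions.

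First I would note that the subspace of radial functions in $\mathcal D(\mathsf L_2)$ is invariant under $\mathcal L$, since $\ninf$ and $\cinf$ are radial. Consequently, $\mathcal L$ restricted to this subspace is still self-adjoint for the scalar product $\langle\cdot,\cdot\rangle$, and the associated Rayleigh quotient
\[
\Lambda_1^{\rm rad}:=\inf_{\substack{f\in\mathcal D(\mathsf L_2)\setminus\{0\}\\ f\ \text{radial}}}\frac{\mathsf Q_2[f]}{\mathsf Q_1[f]}
\]
is bounded below by $\Lambda_1\ge 1$, and is attained (by the concentration–compactness argument of Lemma~\ref{Lem:DiscreteSpectrum}, which preserves radial symmetry since $\ninf$ is radial). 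Hence the infimum is an eigenvalue of $\L$ whose eigenfunction $f$ is radial and orthogonal, with respect to $\langle\cdot,\cdot\rangle$, to $\mathrm{Ker}(\L)=\mathrm{span}(f_{0,0})$.

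Next I would translate this radial eigenvalue problem into the cumulated-density form~\eqref{Eqn:CumulatedDensitiesLinearized}, exactly as in Step~1 of the proof of Theorem~\ref{Teo:Gap}. This puts the eigenvalue problem into Sturm--Liouville form
\[
\frac d{ds}\Bigl(e^{\alpha(s)}\,\phi'\Bigr)+\frac{\lambda+2\,\Phi'}{4\,s}\,e^{\alpha(s)}\,\phi=0\,,\qquad\phi(0)=0,\quad\lim_{s\to\infty}\phi(s)=0,
\]
with $\alpha(s)=\tfrac s2+\tfrac12\int_0^s\Phi(\sigma)/\sigma\,d\sigma$. Since $\lambda=0$ corresponds to the element $\phi=\frac{d\Phi}{da}$ of the kernel (i.e.\ to $f_{0,0}$), and since Sturm--Liouville theory identifies the eigenvalues through nodal properties of $\phi$, the smallest eigenvalue $\lambda>0$ with $\phi$ of constant sign is $\lambda_{0,1}=2$, realized by $\phi(s)=s\,\Phi'(s)$ (which corresponds, up to normalization, to $f_{0,1}=1+\tfrac12 x\cdot\nabla\log\ninf$). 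This is precisely the content of Step~2 of the proof of Theorem~\ref{Teo:Gap}.

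Combining the two observations, $\Lambda_1^{\rm rad}=\lambda_{0,1}=2$, and the Rayleigh quotient characterization yields
\[
2\,\mathsf Q_1[f]\le\mathsf Q_2[f]\qquad\forall\,f\in\mathcal D(\mathsf L_2)\ \text{radial}\,,
\]
which is the claimed inequality. The only non-routine point is to make sure that the minimization problem defining $\Lambda_1^{\rm rad}$ indeed admits a minimizer in the radial class and that the orthogonality to the kernel is compatible with the passage to the limit; both facts follow verbatim from the arguments of Lemma~\ref{Lem:DiscreteSpectrum} since radial symmetry is preserved under weak $L^2(\ninf\,dx)$ convergence and the truncation procedure used there. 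The main conceptual obstacle, namely ruling out the existence of a positive eigenvalue between $0$ and $2$ in the radial sector, is already resolved by the Sturm--Liouville argument of Step~2.
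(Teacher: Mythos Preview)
Your proposal is correct and follows exactly the approach the paper indicates: the paper itself does not give a separate proof but simply states that the result is ``a consequence of Step~2 in the proof of Theorem~\ref{Teo:Gap}''. You have spelled out precisely that consequence---invariance of the radial sector, attainment of the restricted Rayleigh quotient via Lemma~\ref{Lem:DiscreteSpectrum}, and the Sturm--Liouville identification of $\lambda_{0,1}=2$ as the lowest positive radial eigenvalue---which is more detail than the paper provides.
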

%---------------------------------------------------------------------
This observation has to be related with recent results of V.~Calvez and J.A.~Carrillo. As a consequence, the rate $e^{-\,2\,t}$ in Theorem~\ref{Thm:Main} can be replaced by $e^{-\,4\,t}$ when solutions are radially symmetric, consistently with \cite[Theorem 1.2]{Calvez:2010fk}. The necessary adaptations (see Section~\ref{Sec:PrfThmMain}) are straightforward.

%%%%%%%%%%%%%%%%%%%%%%%%%%%%%%%%%%%%%%%%%%%%%%%%%%%%%%%%%%%%%%%%%%%%%%
%%%%%%%%%%%%%%%%%%%%%%%%%%%%%%%%%%%%%%%%%%%%%%%%%%%%%%%%%%%%%%%%%%%%%%
\section{A strict positivity result for the linearized entropy}\label{Sec:Strict}

Lemma~\ref{Lem:LimHLS} can be improved and this is our second main estimate.
%---------------------------------------------------------------------
\begin{Teo}\label{Teo:LimHLS} There exists $\Lambda>1$ such that
\be{Ineq:ImprovedGap}
\Lambda\ix{f\,\ninf\,(-\Delta)^{-1}\,(f\,\ninf)}\le\ix{|f|^2\,\ninf}
\ee
for any $f\in L^2(\R^2,\ninf\,dx)$ such that \eqref{Cdt:Average} holds.
\end{Teo}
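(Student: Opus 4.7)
The plan is to recast the claimed inequality as a spectral statement for a compact self-adjoint operator on $L^2(\R^2,dx)$, and then to extract the strict gap $\Lambda>1$ by a soft compactness argument which feeds on the equality case of Lemma~\ref{Lem:LimHLS}.

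The first step is the unitary change of variable $u:=\sqrt{\ninf}\,f$, which maps $L^2(\R^2,\ninf\,dx)$ isometrically onto $L^2(\R^2,dx)$. Introducing the symmetric integral operator $T$ on $L^2(\R^2,dx)$ with kernel
\[
K(x,y):=\sqrt{\ninf(x)\,\ninf(y)}\;G_2(x-y)\;,
\]
one has
\[
\|u\|_{L^2(dx)}^2=\ix{|f|^2\,\ninf}\quad\mbox{and}\quad\langle Tu,u\rangle_{L^2(dx)}=\ix{f\,\ninf\,(-\Delta)^{-1}(f\,\ninf)}\;.
\]
Since $f_{0,0}$ grows only logarithmically at infinity while $\ninf(x)\sim|x|^{-\alpha}e^{-|x|^2/2}$, the vector $v:=\sqrt{\ninf}\,f_{0,0}$ belongs to $L^2(\R^2,dx)$, and the orthogonality~\eqref{Cdt:Average} translates into $u\perp v$ in $L^2(\R^2,dx)$. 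The theorem is therefore equivalent to
\[
\Lambda^{-1}:=\sup\parllav{\langle Tu,u\rangle_{L^2(dx)}:\|u\|_{L^2(dx)}=1\,,\;u\perp v}\,<\,1\;.
\]

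The second step is to observe that $T$ is Hilbert--Schmidt, hence compact: using the Gaussian decay of $\ninf$ and the fact that $(\log|x-y|)^2$ is locally integrable in $\R^2\times\R^2$ near the diagonal while $(\log|x-y|)^2\le C(1+|x|^2+|y|^2)$ at infinity, one checks that $\iint_{\R^2\times\R^2}K(x,y)^2\,dx\,dy<\infty$. By Lemma~\ref{Lem:LimHLS} the quadratic form of $T$ is bounded above by the identity, so $T\le\mathrm{Id}$ in the sense of self-adjoint operators and in particular $\Lambda^{-1}\le1$.

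The third step is to argue by contradiction that $\Lambda^{-1}<1$. Assume $\Lambda^{-1}=1$ and pick a maximizing sequence $(u_n)$ with $\|u_n\|=1$, $u_n\perp v$ and $\langle Tu_n,u_n\rangle\to1$. After extraction, $u_n\rightharpoonup u$ weakly in $L^2(\R^2,dx)$; compactness of $T$ turns this into $Tu_n\to Tu$ strongly, so $\langle Tu_n,u_n\rangle\to\langle Tu,u\rangle=1$. Weak lower semi-continuity of the norm gives $\|u\|\le1$, while Lemma~\ref{Lem:LimHLS} applied to $f:=u/\sqrt{\ninf}$ gives $\langle Tu,u\rangle\le\|u\|^2$; together these force $\|u\|=1$ together with equality in~\eqref{Ineq:LimHLS}. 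The constraint $u\perp v$ passes to the weak limit, hence $f$ satisfies~\eqref{Cdt:Average}, and the equality statement of Lemma~\ref{Lem:LimHLS} then yields $f=0$, contradicting $\|u\|=1$. Setting $\Lambda:=1/\Lambda^{-1}>1$ completes the argument.

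The main technical point I expect is the Hilbert--Schmidt estimate on $T$, where one has to balance the mild logarithmic singularity of $G_2$ near the diagonal against the Gaussian tails of~$\ninf$; once compactness is in hand, the remainder is a standard soft argument that relies entirely on the equality case already contained in the proof of Lemma~\ref{Lem:LimHLS}.
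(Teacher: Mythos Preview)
Your argument is correct, and it follows a genuinely different route from the paper. The paper proceeds in two steps: first it shows by Legendre duality that \eqref{Ineq:ImprovedGap} is equivalent to the Poincar\'e-type inequality
\[
\Lambda\ix{|h|^2\,\ninf}\le\ix{|\nabla h|^2}
\]
under the same orthogonality constraint, and then it proves the latter by a concentration-compactness/IMS truncation argument to produce a minimizer, which cannot correspond to $\Lambda=1$ by Proposition~\ref{Prop:KerL}. You instead stay on the original side of the duality: after the unitary change $u=\sqrt{\ninf}\,f$, the quadratic form becomes $\langle Tu,u\rangle$ for the integral operator $T$ with kernel $\sqrt{\ninf(x)\,\ninf(y)}\,G_2(x-y)$, and the Gaussian tails of $\ninf$ make $T$ Hilbert--Schmidt, hence compact; the strict gap then falls out of the standard weak-limit argument combined with the equality case of Lemma~\ref{Lem:LimHLS}.

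Your approach is more economical and avoids both the Legendre transform and the concentration-compactness machinery; compactness of $T$ does all the work. What the paper's detour buys is the dual Poincar\'e inequality \eqref{Ineq:ImprovedGapBis}, which the authors flag as being of independent interest (it is the linearization of an Onofri-type inequality). Two small remarks on your write-up: you only need $v=\sqrt{\ninf}\,f_{0,0}\in L^2(dx)$, which is exactly $f_{0,0}\in L^2(\ninf\,dx)$ as asserted in Proposition~\ref{Prop:KerL}, so the logarithmic growth claim is not strictly required; and in the passage $\langle Tu_n,u_n\rangle\to\langle Tu,u\rangle$ you might spell out that this follows from $Tu_n\to Tu$ strongly together with $u_n\rightharpoonup u$ weakly and $\|u_n\|=1$, via the splitting $\langle Tu_n,u_n\rangle-\langle Tu,u\rangle=\langle Tu_n-Tu,u_n\rangle+\langle Tu,u_n-u\rangle$.
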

%---------------------------------------------------------------------
\begin{proof} Let us give an elementary proof based on two main observations: the equivalence with a Poincar\'e type inequality using Legendre's transform, and the application of a concentration-compactness method for proving the Poincar\'e inequality. Recall that by Lemma~\ref{Lem:LimHLS} we already know that \eqref{Ineq:ImprovedGap} holds with $\Lambda=1$.

\medskip\noindent\emph{Step 1.} We claim that Inequality~\eqref{Ineq:ImprovedGap} is equivalent to 
\be{Ineq:ImprovedGapBis}
\Lambda\ix{|h|^2\,\ninf}\le\ix{|\nabla h|^2}
\ee
for any $h\in L^2(\R^2,\ninf\,dx)$ such that the condition $\ix{h\,f_{0,0}\,\ninf}=0$ holds, \emph{i.e.}~such that $h$ satisfies \eqref{Cdt:Average}. Let us prove this claim. 

Assume first that~\eqref{Ineq:ImprovedGapBis} holds and take Legendre's transform of both sides with respect to the natural scalar product in $L^2(\ninf\,dx)$: for any $f\in L^2(\R^2,\ninf\,dx)$ such that \eqref{Cdt:Average} holds,
\[
\sup_h\(\ix{f\,h\,\ninf}-\frac 12\ix{h^2\,\ninf}\)\ge\sup_h\(\ix{f\,h\,\ninf}-\frac1{2\Lambda}\ix{|\nabla h|^2}\)
\]
where the supremum is taken on both sides on all functions $h$ in $L^2(\R^2,\ninf\,dx)$ such that~$h$ satisfies \eqref{Cdt:Average}. Since semi-definite positive quadratic forms are involved, the suprema are achieved by convexity. For the left hand side, we find that the optimal function satisfies
\[
f=h+\mu\,f_{0,0}
\]
for some Lagrange multiplier $\mu\in\R$. However, if we multiply by $f_{0,0}\,\ninf$, we get that $\mu=0$, so that the left hand side of the inequality is simply $\frac 12\ix{f^2\,\ninf}$. As for the right hand side, we find that the optimal function $f$ is such that
\[
f\,\ninf=-\frac1\Lambda\,\Delta h+\mu\,f_{0,0}\,\ninf
\]
for some Lagrange multiplier $\mu\in\R$. In that case, if we multiply by $(-\Delta)^{-1}(f_{0,0}\,\ninf)=f_{0,0}$, we get that
\[
\mu\ix{f_{0,0}^2\,\ninf}=\frac1\Lambda\ix{\Delta h\,(-\Delta)^{-1}(f_{0,0}\,\ninf)}=-\frac1\Lambda\ix{h\,f_{0,0}\,\ninf}=0
\]
thus proving that $\mu=0$ as well. Hence the right hand side of the inequality is simply $\frac\Lambda2\ix{f\,\ninf\,(-\Delta)^{-1}\,(f\,\ninf)}$, which establishes \eqref{Ineq:ImprovedGap}. It is left to the reader to check that Inequality~\eqref{Ineq:ImprovedGapBis} can also be deduced from \eqref{Ineq:ImprovedGap} by a similar argument.

\medskip\noindent\emph{Step 2.} Let us prove that \eqref{Ineq:ImprovedGapBis} holds for some $\Lambda>1$. Consider an optimizing sequence of functions $(h_n)_{n\ge1}$ such that $\ix{h_n^2\,\ninf}=1$ and $\ix{h_n\,f_{0,0}\,\ninf}=0$ for any $n\ge 1$, and $\lim_{n\to\infty}\ix{|\nabla h_n|^2}=\Lambda$. As in the proof of Lemma~\ref{Lem:DiscreteSpectrum}, we are going to use the IMS truncation method. Consider a smooth function $\chi$ with the following properties: $0\le\chi\le1$, $\chi(x)=1$ for any $x\in B(0,1)$, $\chi(x)=0$ for any $x\in\R^2\setminus B(0,2)$, and define $\chi_R(x):=\chi(x/R)$ for any $x\in\R^2$. It is standard in concentration-compactness methods that for any $\varepsilon>0$, one can find a sequence of positive numbers $(R_n)_{n\ge1}$ such that
\[
h_n^{(1)}=\chi_{R_n}\,h_n\quad\mbox{and}\quad h_n^{(2)}=\sqrt{1-\chi_{R_n}^2}\,h_n\;,
\]
and, up to the extraction of a subsequence, there exists a function $h$ such that
\[
\ix{|\nabla h_n^{(1)}|^2}\ge\eta\,\Lambda-\varepsilon\quad\mbox{and}\quad\ix{|\nabla h_n^{(2)}|^2}\ge (1-\eta)\,\Lambda-\varepsilon
\]
for some $\eta\in[0,1]$, where the sequence $(\nabla h_n^{(1)})_{n\ge1}$ strongly converges to $\nabla h$ and
\[
\lim_{n\to\infty}\ix{|h_n^{(1)}|^2\,\ninf}=\ix{h^2\,\ninf}=:\theta
\]
(this implies the strong convergence of $(h_n^{(1)})_{n\ge1}$ towards $h$ in $L^2(\ninf\,dx)$) because
\[
\int_{\R^2\setminus B(0,R)}|h_n^{(1)}|^2\,\ninf\;dx\le\(\ix{|h_n^{(1)}|^\frac{2d}{d-2}\,\ninf}\)^\frac{d-2}d \(\int_{\R^2\setminus B(0,R)}\ninf^\frac d2\,dx\)^\frac 2d
\]
is uniformly small as $R\to\infty$ by Sobolev's inequality and because the last term of the right hand side is such that $\lim_{R\to\infty}\int_{\R^2\setminus B(0,R)}\ninf^{d/2}\,dx=0$. Of course, we know that
\[
\eta\,\Lambda\ge\ix{|\nabla h|^2}\ge\Lambda\,\theta
\]
by definition of $\Lambda$. The above estimate also guarantees that 
\[
\int_{\R^2\setminus B(0,R)}|h_n^{(2)}|^2\,\ninf\;dx=:\varepsilon_n\to 0\quad\mbox{as}\quad n\to\infty\;.
\]
By construction of $(h_n^{(1)})_{n\ge1}$ and $(h_n^{(2)})_{n\ge1}$, we know that $|h_n^{(1)}|^2+|h_n^{(2)}|^2=|h_n|^2$ and hence \hbox{$\theta=1$}. This also means that $\eta=1$ and hence $h$ is a minimizer, since the constraint passes to the limit:
\[
\ix{h\,f_{0,0}\,\ninf}=\lim_{n\to\infty}\ix{h_n\,f_{0,0}\,\ninf}=0\;.
\]
The function $h$ is a solution of the Euler-Lagrange equation:
\[
-\Delta h=\Lambda\,h\,\ninf\;.
\]
By Proposition~\ref{Prop:KerL}, if $\Lambda=1$, then $h$ and $f_{0,0}$ are collinear, which is a contradiction with the constraint. This proves that $\Lambda>1$.\end{proof}

Notice that the functions $\ninf$ and $\cinf$ being radial symmetric, we know that a decomposition into spherical harmonics allows to reduce the problem of computing all eigenvalues to radially symmetric eigenvalue problems. This provides a method to compute the explicit value of $\Lambda$, at least numerically.

%---------------------------------------------------------------------
\begin{Rem} Inequality~\eqref{Ineq:ImprovedGapBis} is a Poincar\'e inequality, which has already been established in~\cite{CD2012Cras} as a linearized version of an Onofri type inequality. This Onofri inequality is dual of the logarithmic Hardy-Littlewood-Sobolev type inequality that has been established in \cite{MR2226917} and according to which the free energy functional $F[n]$ is nonnegative. In this paper, we make use of the linearized versions of these Onofri and subcritical (in terms of the mass) logarithmic Hardy-Littlewood-Sobolev inequalities, and of improved forms that are obtained by requiring some addition orthogonality constraints.\end{Rem}
%---------------------------------------------------------------------

A straightforward consequence of Theorem~\ref{Teo:LimHLS} is that we can estimate $\ix{f^2\,\ninf}$ in terms of $\mathsf Q_1[f]=\ix{f\,(f-g\,\cinf)\,\ninf}$. This result has been announced in \cite{CD2012Cras}.
%---------------------------------------------------------------------
\begin{Corollary}\label{Cor:LimHLS} For the same value of $\Lambda>1$ as in Theorem~\ref{Teo:LimHLS}, we have
\[
\ix{f^2\,\ninf}\le\frac\Lambda{\Lambda-1}\,\mathsf Q_1[f]
\]
for any $f\in L^2(\R^2,\ninf\,dx)$ such that \eqref{Cdt:Average} holds.
\end{Corollary}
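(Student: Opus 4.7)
The plan is a one-line algebraic rearrangement, so I will keep the proposal brief and just indicate the intended chain of equalities and inequalities.

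The key observation is that by definition
\[
\mathsf Q_1[f] = \int_{\R^2} |f|^2\,\ninf\,dx \;-\; \int_{\R^2} f\,\ninf\,(-\Delta)^{-1}(f\,\ninf)\,dx,
\]
which is just the expansion of $\ix{f\,(f - g\,\cinf)\,\ninf}$ using the Poisson equation $-\Delta(g\,\cinf) = f\,\ninf$. So to control $\ix{f^2\,\ninf}$ by $\mathsf Q_1[f]$, I just need to control the convolution term by $\ix{f^2\,\ninf}$ with a constant strictly less than $1$.

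This is precisely what Theorem~\ref{Teo:LimHLS} provides under the orthogonality condition~\eqref{Cdt:Average}: it gives
\[
\int_{\R^2} f\,\ninf\,(-\Delta)^{-1}(f\,\ninf)\,dx \;\leq\; \frac{1}{\Lambda}\,\int_{\R^2} |f|^2\,\ninf\,dx,
\]
with $\Lambda > 1$. Substituting this into the definition of $\mathsf Q_1[f]$ yields
\[
\mathsf Q_1[f] \;\geq\; \Bigl(1 - \tfrac{1}{\Lambda}\Bigr)\int_{\R^2} |f|^2\,\ninf\,dx \;=\; \frac{\Lambda-1}{\Lambda}\int_{\R^2} |f|^2\,\ninf\,dx,
\]
and rearranging gives the claimed inequality $\ix{f^2\,\ninf}\leq\frac{\Lambda}{\Lambda-1}\,\mathsf Q_1[f]$.

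There is no real obstacle here: the corollary is essentially a trivial consequence of Theorem~\ref{Teo:LimHLS} once one recognizes that $\mathsf Q_1[f]$ is the difference between the $L^2(\ninf\,dx)$ norm squared and the bilinear form $\ix{f\,\ninf\,(-\Delta)^{-1}(f\,\ninf)}$; the factor $\Lambda/(\Lambda-1)$ being finite is exactly the content of the strict inequality $\Lambda>1$, which is the nontrivial part of Theorem~\ref{Teo:LimHLS} and was established there via the Legendre duality with \eqref{Ineq:ImprovedGapBis} together with the concentration-compactness argument.
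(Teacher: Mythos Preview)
Your proof is correct and is essentially identical to the paper's: the paper writes the same algebraic identity as the decomposition $\mathsf Q_1[f]=\frac{\Lambda-1}{\Lambda}\ix{f^2\,\ninf}+\frac{1}{\Lambda}\big(\ix{|f|^2\,\ninf}-\Lambda\ix{f\,\ninf\,(-\Delta)^{-1}(f\,\ninf)}\big)$ and drops the last parenthesis using Theorem~\ref{Teo:LimHLS}, which is exactly your substitution step.
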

%---------------------------------------------------------------------
\begin{proof} We may indeed write
\[
\mathsf Q_1[f]=\frac{\Lambda-1}\Lambda\,\ix{f^2\,\ninf}+\frac 1\Lambda\,\(\ix{|f|^2\,\ninf}-\Lambda\ix{f\,\ninf\,(-\Delta)^{-1}\,(f\,\ninf)}\)
\]
and use the fact that the last term of the right hand side is nonnegative.\end{proof}

%%%%%%%%%%%%%%%%%%%%%%%%%%%%%%%%%%%%%%%%%%%%%%%%%%%%%%%%%%%%%%%%%%%%%%
%%%%%%%%%%%%%%%%%%%%%%%%%%%%%%%%%%%%%%%%%%%%%%%%%%%%%%%%%%%%%%%%%%%%%%
\section{The large time behavior}\label{Sec:PrfThmMain}

This section is devoted to the proof of Theorem~\ref{Thm:Main}. Our approach is guided by the analysis of the evolution equation corresponding to the linearization of the Keller-Segel system: see Section~\ref{Sec:linearized}. The key estimates for the nonlinear evolution problem have been stated in Theorem~\ref{Teo:Gap}, Theorem~\ref{Teo:LimHLS}, and Corollary~\ref{Cor:LimHLS}. Nonlinear terms are estimated using Corollary~\ref{Cor:UniformCV}.

%%%%%%%%%%%%%%%%%%%%%%%%%%%%%%%%%%%%%%%%%%%%%%%%%%%%%%%%%%%%%%%%%%%%%%
\subsection{A linearized evolution problem}\label{Sec:linearized}

We recall that the restriction of $\L$ to $\mathcal D(\mathsf L_1)$ is a self-adjoint operator with domain $\mathcal D(\mathsf L_2)$, such that
\[
\langle f,\L\,f\rangle=-\,\mathsf Q_2[f]\quad\forall\;f\in\mathcal D(\mathsf L_2)\;.
\]
and $\mathrm{Ker}(\L)\cap\mathcal D(\mathsf L_2)=\{0\}$

By Proposition~\ref{Prop:LisSelfAdjoint}, any solution $(t,x)\mapsto f(t,x)$ of the linearized Keller-Segel model
\ben
\left\{\begin{array}{lcr}
\dr{f}{t} =\L\,f& x\in\R^2\,, &t>0\;,\\[6pt]
-\Delta (g\,\cinf) = f\,\ninf & x\in \R^2\,, &t>0\;,
\end{array}\right.
\een
has an exponential decay, since we know that
\[
\frac d{dt}\langle f(t,\cdot),f(t,\cdot)\rangle=2\,\langle f(t,\cdot),\,\L\,f(t,\cdot)\rangle\;,
\]
that is
\[\label{Eqn:linearizedDecay}
\frac d{dt}\,\mathsf Q_1[f(t,\cdot)]=-\,2\,\mathsf Q_2[f(t,\cdot)]\le-\,2\,\mathsf Q_1[f(t,\cdot)]
\]
by Theorem~\ref{Teo:Gap}. Hence we obtain
\[
\mathsf Q_1[f(t,\cdot)]\le\mathsf Q_1[f(0,\cdot)]\,e^{-\,2\,t}\quad\forall\;t\in\R^+\,.
\]
Here we adopt the usual convention that $\mathsf Q_2[f]=+\infty$ for any $f\in\mathcal D(\mathsf L_1)\setminus\mathcal D(\mathsf L_2)$.

%%%%%%%%%%%%%%%%%%%%%%%%%%%%%%%%%%%%%%%%%%%%%%%%%%%%%%%%%%%%%%%%%%%%%%
\subsection{Proof of Theorem~\ref{Thm:Main}}\label{Sec:PrfMain}

As in \cite{Blanchet2010533}, Eq.~\eqref{RKS} can be rewritten in terms of $f=(n-\ninf)/\ninf$ and $g=(c-\cinf)/\cinf$~in the form of \eqref{pblin}, that is
\[
\dr{f}{t} =\L\,f-\frac 1\ninf\,\nabla\left[\ninf\,f\,\nabla(g\,\cinf)\right]\;.
\]
The computation for the linearized problem established in Section~\ref{Sec:linearized} can be adapted to the nonlinear case and gives
\[
\frac d{dt}\,\mathsf Q_1[f(t,\cdot)]=-\,2\,\mathsf Q_2[f(t,\cdot)]+\,2\ix{\nabla(f-g\,\cinf)\,f\,\ninf\cdot\nabla(g\,\cinf)}\;,
\]
with $-\,2\,\mathsf Q_2[f(t,\cdot)]\le-\,2\,\mathsf Q_1[f(t,\cdot)]$ according to Theorem~\ref{Teo:Gap}.

As noted in Section~\ref{Sec:FunctSetting}, Condition~\eqref{Cdt:Average} is satisfied, which means that $\langle f,f_{0,0}\rangle=0$. Indeed, using the explicit expression of $f_{0,0}=\partial_M\log n_{\infty,M}$, we recall that
\begin{eqnarray*}
\langle f(t,\cdot),f_{0,0}\rangle&=&\ix{f(t,\cdot)\,f_{0,0}\,\ninf}-\ix{f(t,\cdot)\,\ninf\,(-\Delta)^{-1}(f_{0,0}\,\ninf)}\\
&=&\ix{f(t,\cdot)\,(\partial_M\ninf-\ninf\,\partial_M\,c_\infty)}\\
&=&\frac1µ\(1+\ix{\partial_M\,c_\infty\,\ninf}\)\,\ix{f(t,\cdot)\,\ninf}=0
\end{eqnarray*}
because of the mass conservation.

To get an estimate on the asymptotic behaviour, we have to establish an estimate of the last term of the right hand side, which is cubic in terms of $f$. For this purpose, we apply H\"older's inequality to get
\[
\(\ix{\nabla(f-g\,\cinf)\,f\,\ninf\cdot\nabla(g\,\cinf)}\)^2\le \mathsf Q_2[f]\,\ix{f^2\,\ninf}\,\nrm{\nabla(g\,\cinf)}\infty^2\;.
\]
Using Corollary~\ref{Cor:LimHLS}, we find that the right hand side can be bounded by
\[
\frac\Lambda{\Lambda-1}\,\mathsf Q_1[f]\,\mathsf Q_2[f]\,\nrm{\nabla(g\,\cinf)}\infty^2
\]
and $\lim_{t\to\infty}\nrm{\nabla(g(t,\cdot)\,\cinf)}\infty=0$ according to Corollary~\ref{Cor:UniformCV}. Therefore, there exists a positive continuous function $t\mapsto\delta(t)$ with $\lim_{t\to\infty}\delta(t)=0$ such that
\[
\frac d{dt}\,\mathsf Q_1[f(t,\cdot)]\le-\,2\,\mathsf Q_2[f(t,\cdot)]+\delta(t)\,\sqrt{\mathsf Q_1[f(t,\cdot)]\,\mathsf Q_2[f(t,\cdot)]}\le(\delta(t)-\,2)\,\mathsf Q_2[f(t,\cdot)]\;,
\]
where the last inequality is a consequence of Theorem~\ref{Teo:Gap}. For some $t_*>0$, large enough, we have that $\delta(t)-\,2\le0$ for any $t>t_*$ and by Theorem~\ref{Teo:Gap} again, we get that
\[
\frac d{dt}\,\mathsf Q_1[f(t,\cdot)]\le-\,(2-\delta(t))\,\mathsf Q_1[f(t,\cdot)]\quad\forall\,t>t_*\;,
\]
thus proving that $\lim_{t\to\infty}\mathsf Q_1[f(t,\cdot)]=0$. Now we can give a refined estimate of the decay of $\nrm{\nabla(g\,\cinf)}\infty$. Applying Lemma~\ref{Lem:Linfty} with $\varepsilon=1$, we get that
\[
\nrm{\nabla(g\,\cinf)}\infty\le C\,\(\nrm{f\,\ninf}1+\nrm{f\,\ninf}3\)\,.
\]
Using H\"older's inequality, we find that
\begin{eqnarray*}
&&\nrm{f\,\ninf}1\le\sqrt M\,\nrm{f\,\sqrt\ninf}2\;,\\
&&\nrm{f\,\ninf}3\le\nrm{f\,\sqrt\ninf}2^{2/3}\,\nrm{f\,\ninf}\infty^{1/3}\,\nrm{\ninf}\infty^{1/3}\;.
\end{eqnarray*}
According to Corollary~\ref{Cor:LimHLS}, $\nrm{f\,\sqrt\ninf}2^2$ is bounded by $\frac\Lambda{\Lambda-1}\,\mathsf Q_1[f]$, so that
\[
\delta(t)\le O(\mathsf Q_1[f(t,\cdot)]^{4/3})\quad\mbox{as}\quad t\to\infty\;.
\]
As a consequence, we finally get that
\[
\limsup_{t\to\infty}e^{2\,t}\,\mathsf Q_1[f(t,\cdot)]<\infty\;,
\]
which completes the proof of Theorem~\ref{Thm:Main}.\qed

\begin{Rem} From the estimate of Theorem~\ref{Thm:Main}, we can deduce uniform rate of convergence. Indeed, by repeating the computations of Section~\ref{Sec:Uniform}, we may now give a refined estimate of $\mathcal R(t)$. From Theorem~\ref{Thm:Main}, we deduce the estimates
\[
\nrm{n(t,\cdot)-\ninf}1\le\(\nrm{\ninf}1\ix{\frac{|n(s,\cdot)-\ninf|^2}{\ninf}}\)^\frac 12\le\frac{\sqrt{C\,M}}{2\pi}\,\frac{e^{-t}}{1-e^{-2t}}\;,
\]
\[\nrm{n(s,\cdot)-\ninf}p\le\(\nrm{|n(s,\cdot)-\ninf|^{p-2}\,\ninf}\infty\ix{\frac{|n(s,\cdot)-\ninf|^2}{\ninf}}\)^\frac 1p=O(e^{-2s/p})\;.
\]
Moreover, the equivalence of $\mathsf Q_1[f]$ with $\ix{f^2\,\ninf}$ and Theorem~\ref{Thm:Main} shows that
\[
\nrm{\nabla c(s,\cdot)-\nabla \cinf}2^2=O(e^{-2s})\quad\mbox{as}\quad s\to\infty\;,
\]
and, as a consequence of Corollary~\ref{Cor:UniformCV},
\[
\nrm{\nabla c(s,\cdot)-\nabla \cinf}q=O(e^{-s/q})\quad\mbox{as}\quad s\to\infty\;.
\]
Hence with $p=2\,q=\frac{3\,r}{r-1}$ we have shown that 
\[
\nrm{n(t,\cdot)-\ninf}\infty=O\(e^{-\frac{2\,(r-1)}{3\,r}t}\)\quad\mbox{as}\quad t\to\infty\;.
\]
Reinjecting this estimate in the above ones, we can even get an improved rate of convergence.
\end{Rem}

%%%%%%%%%%%%%%%%%%%%%%%%%%%%%%%%%%%%%%%%%%%%%%%%%%%%%%%%%%%%%%%%%%%%%%
%%%%%%%%%%%%%%%%%%%%%%%%%%%%%%%%%%%%%%%%%%%%%%%%%%%%%%%%%%%%%%%%%%%%%%
\section{Concluding remarks}\label{Sec:Conclusion}

As a conclusion, let us summarize the main novelties of the method developed in this paper.
\begin{enumerate}
\item Symmetrization and comparison with the symmetrized problem are applied in rescaled variables, and not in the original ones as was done in previous papers. As a consequence, algebraic decay estimates in the original variables are obtained, and not only bounds.
\item These estimates are sufficient to reduce the nonlinear problem to a linearized one. Still one has to understand the spectrum of the linearized operator. 
\item As a consequence of the invariances of the original equation, the three lowest modes can be identified. They are independent of the mass (which is not the case for the other ones). 
\item Our results strongly rely on new, sharp functional inequalities. Getting sharp logarithmic Hardy-Littlewood-Sobolev inequalities in the subcritical range of masses is a consequence of previous papers, but the fact that it has to be done in self-similar variables (after rescaling) has to be emphasized. The Onofri counterpart, which is obtained by a Legendre transform, has been established recently in \cite{CD2012Cras} and is of independent interest. Linearization of both inequalities allows us to establish sharp spectral gap inequalities (and improvements under orthogonality constraints) that are crucial for this paper and may prove to be useful for other mean field type problems.
\item As far as we know, proving that the linearized operator is self-adjoint in the functional space corresponding to the linearization of the free energy (which is a Lyapunov functional) is a natural idea, but has not been exploited yet in this class of problems. The scalar product is new, non-trivial, and we are not aware of the use of similar tools in problems with a mean field term.
\end{enumerate}

%%%%%%%%%%%%%%%%%%%%%%%%%%%%%%%%%%%%%%%%%%%%%%%%%%%%%%%%%%%%%%%%%%%%%%
%%%%%%%%%%%%%%%%%%%%%%%%%%%%%%%%%%%%%%%%%%%%%%%%%%%%%%%%%%%%%%%%%%%%%%
\bigskip\begin{spacing}{0.8}\noindent{\it Acknowledgments.\/} {\small The authors acknowledge support by the ANR projects \emph{CBDif-Fr} and \emph{EVOL} (JD), and by the MathAmSud project \emph{NAPDE} (JC and JD). They also thank the anonymous referees who have done a tremendous work in checking all details of the paper and led to a significant improvement of the final version.}
\par\medskip\noindent{\scriptsize\copyright\,2013 by the authors. This paper may be reproduced, in its entirety, for non-commercial purposes.}\end{spacing}
%%%%%%%%%%%%%%%%%%%%%%%%%%%%%%%%%%%%%%%%%%%%%%%%%%%%%%%%%%%%%%%%%%%%%%
%%%%%%%%%%%%%%%%%%%%%%%%%%%%%%%%%%%%%%%%%%%%%%%%%%%%%%%%%%%%%%%%%%%%%%
\bibliographystyle{siam}\small%\bibliography{References}
%\newpage\tableofcontents

\end{document}